\documentclass{amsart}
\usepackage{amssymb}
\usepackage[pdfencoding=auto]{hyperref}
\usepackage{mathtools}
\usepackage{comment}
\usepackage{enumitem}
\usepackage{graphicx}
\usepackage{ifthen}
\usepackage[svgnames]{xcolor}
\usepackage{scalerel}
\usepackage{mathtools}

\usepackage[capitalise]{cleveref}


\setlist{}
\setenumerate{leftmargin=*,labelindent=0\parindent}
\setitemize{leftmargin=\parindent}

\usepackage{tikz}
\usepackage{tikz-cd}

\newtheorem{thm}{Theorem}[subsection]
\newtheorem{lem}[thm]{Lemma}
\newtheorem{prop}[thm]{Proposition}

\newtheorem{cor}[thm]{Corollary}

\theoremstyle{definition}
\newtheorem{defn}[thm]{Definition}
\newtheorem{ex}[thm]{Example}
\newtheorem{rmk}[thm]{Remark}

\newtheorem{ntn}[thm]{Notation}

\newtheorem{const}[thm]{Construction}

\newtheorem{cvn}[thm]{Convention}

\crefname{lem}{Lemma}{Lemmas}
\crefname{thm}{Theorem}{Theorems}
\crefname{defn}{Definition}{Definitions}
\crefname{notn}{Notation}{Notations}
\crefname{const}{Construction}{Constructions}
\crefname{prop}{Proposition}{Propositions}
\crefname{rmk}{Remark}{Remarks}
\crefname{cor}{Corollary}{Corollaries}
\crefname{ex}{Example}{Examples}
\crefname{nex}{Non-Example}{Non-Examples}

\makeatletter
\let\c@equation\c@thm
\makeatother
\numberwithin{equation}{subsection}

\newcommand{\op}{\textup{op}}
\newcommand{\id}{\textup{id}}
\newcommand{\ob}{\textup{ob}}
\newcommand{\cod}{\textup{cod}}
\newcommand{\dom}{\textup{dom}}
\newcommand{\ho}{\textup{ho}}

\makeatletter
\def\@mathlower#1#2#3{\setbox0=\hbox{$\m@th#2#3$}\lower#1\ht0\box0}
\def\mathlower#1#2{\mathpalette{\@mathlower{#1}}{#2}}
\makeatother

\newcommand{\wto}{\xrightarrow{{\smash{\mathlower{0.8}{\sim}}}}}
\newcommand{\cwtoo}[2]{\begin{tikzcd}[cramped, sep=small,ampersand replacement=\&] #1 \rar[tail, "\sim" style={outer sep=-1pt}] \& #2 \end{tikzcd}}
\newcommand{\fwtoo}[2]{\begin{tikzcd}[cramped, sep=small,ampersand replacement=\&] #1 \rar[two heads, "\sim" style={pos=0.4,outer sep=-1pt}] \& #2 \end{tikzcd}}

\newcommand{\slicel}[2]{\vphantom{#2}^{{#1}/}\mkern-2mu{#2}}

\newcommand{\TTheta}{\mathbf{\Theta}}
\newcommand{\DDelta}{\mathbf{\Delta}}

\newcommand{\cat}[1]{\textup{\textsf{#1}}}
\newcommand{\Set}{\cat{Set}}
\newcommand{\Cat}{\cat{Cat}}
\newcommand{\sSet}{\cat{sSet}}
\newcommand{\sCat}{\cat{sCat}}
\newcommand{\twoCat}{2\text{-}\Cat}
\newcommand{\Gaunt}{\cat{Gaunt}}

\DeclareSymbolFont{bbold}{U}{bbold}{m}{n}
\DeclareSymbolFontAlphabet{\mathbbb}{bbold}
\newcommand{\catone}{\ensuremath{\mathbbb{1}}}
\newcommand{\cattwo}{\ensuremath{\mathbbb{2}}}

\newcommand{\catn}{\ensuremath{\mathbbb{n}}}

\newcommand{\mG}{{\mathfrak{G}}}
\newcommand{\mF}{{\mathfrak{F}}}
\newcommand{\mP}{{\mathfrak{P}}}

\newcommand{\peq}{\preccurlyeq}
\newcommand{\npeq}{\not\preccurlyeq}

\newcommand{\inp}{\textup{in}}
\newcommand{\out}{\textup{out}}
\newcommand{\pred}{\textup{pred}}
\newcommand{\suc}{\textup{succ}}


\newcommand{\cA}{\mathcal{A}}
\newcommand{\cB}{\mathcal{B}}
\newcommand{\cC}{\mathcal{C}}
\newcommand{\cD}{\mathcal{D}}
\newcommand{\cP}{{\mathcal{P}}}
\newcommand{\cQ}{\mathcal{Q}}

\newcommand{\cV}{\mathcal{V}}
\newcommand{\cW}{\mathcal{W}}

\newcommand{\CC}{\mathbb{C}}

\newcommand{\aamalg}[1]{\underset{#1}{{\amalg}}} 
\begin{document}

\title{An \texorpdfstring{$(\infty,2)$}{(infinity,2)}-categorical pasting theorem}

\author{Philip Hackney}
\address{Department of Mathematics,
University of Louisiana at Lafayette, LA, USA
}
\email{philip@phck.net} 

\author{Viktoriya Ozornova}
\address{Max Planck Institute for Mathematics, Bonn, Germany}
\email{viktoriya.ozornova@mpim-bonn.mpg.de}

\author{Emily Riehl}
\address{Department of Mathematics\\Johns Hopkins University \\ 
Baltimore, MD 
 \\ USA}
\email{eriehl@jhu.edu}

\author{Martina Rovelli}
\address{Department of Mathematics and Statistics, 
University of Massachusetts 
Amherst,
MA, 
USA
}
\email{rovelli@math.umass.edu}

\date{\today}

\begin{abstract}
    We show that any pasting diagram in any $(\infty,2)$-category has a homotopically unique composite. This is achieved by showing that the free 2-category generated by a pasting scheme is the homotopy colimit of its cells as an $(\infty,2)$-category. We prove this explicitly in the simplicial categories model and then explain how to deduce the model-independent statement from that calculation.
\end{abstract}

\thanks{This material is based upon work supported by the National Science Foundation under Grant No.\ DMS-1440140, which began while the authors were in residence at the Mathematical Sciences Research Institute in Berkeley, California, during the Spring 2020 semester. The authors learned a lot from fruitful discussions with the other members of the MSRI-based working group on $(\infty,2)$-categories. This work was supported by a grant from the Simons Foundation (\#850849, PH). The second author thankfully acknowledges the financial support by the DFG grant OZ 91/2-1 with the project nr.~442418934. The third author is also grateful for support from the NSF via DMS-1652600, from the ARO under MURI Grant W911NF-20-1-0082, from the AFOSR under award number FA9550-21-1-0009, and by the Johns Hopkins President's Frontier Award program. The fourth author is deeply appreciative of the Mathematical Sciences Institute at the Australian National University for their support during the pandemic year. We are also grateful to an anonymous referee, who caught several small gaps in proofs in the initial manuscript and prompted us to make additional clarifications.}

\maketitle

\setcounter{tocdepth}{2}
\tableofcontents 

\section{Introduction}
A compatibly-oriented collection of 2-cells in a 2-category or bicategory can be composed by \emph{pasting}, a notion first introduced by B\'{e}nabou \cite{Benabou-Bicategories}. 
\[
\begin{tikzcd}[sep=small] & \arrow[ddr, phantom, "\displaystyle\Downarrow"]  & & \bullet \arrow[drr] \\
\bullet \arrow[ddr] \arrow[drr] \arrow[urrr] & & & & &  \bullet \arrow[drr] \arrow[ddr] & \arrow[dd, phantom, "\displaystyle\Downarrow"]  \\ \arrow[rr, phantom, "\displaystyle\Downarrow"] 
& & \bullet\arrow[uur] \arrow[urrr] \arrow[rrrr, phantom, "\displaystyle\Downarrow"] & \arrow[uu, phantom, "\displaystyle\Downarrow"]   & &~ & ~ & \bullet \\ & \bullet \arrow[ur] \arrow[rrrrr]  & & & & &  \bullet \arrow[ur]
\end{tikzcd}
\]
A celebrated theorem of Power \cite{PowerPasting} demonstrates that any 2-categorical pasting scheme defines a unique composite 2-cell in any 2-category. The essential uniqueness of pasting composites in a bicategory was proven by Verity \cite{Verity-Thesis} as a consequence of the bicategorical coherence theorem and later by a direct proof in a recent paper of Johnson and Yau \cite{JY}, also appearing in their book \cite{JYbook}.

The upshot of these results is a justification of the technique of pasting diagram chases in a 2-category, which had already been put to use, e.g., in \cite{KS}. For instance, to prove that any two left adjoints $f,f'$ to a given 1-cell $u \colon a \to b$ in a 2-category are isomorphic, one is led to consider the pasted composites of the units and counits of these adjunctions, displayed below-left, and their composite below-right: 
\[
\begin{tikzcd}[column sep=small] 
b \arrow[dr, "{f'}"'] \arrow[rr, equals] & \arrow[d, phantom, "\scriptstyle\Downarrow\eta'"] & b \arrow[dr, "f"]  &  & b \arrow[dr, "{f}"'] \arrow[rr, equals] & \arrow[d, phantom, "\scriptstyle\Downarrow\eta"] & b \arrow[dr, "f'"] & & & b \arrow[dr, "{f'}"'] \arrow[rr, equals] & \arrow[d, phantom, "\scriptstyle\Downarrow\eta'"] & b \arrow[dr, "f" description]   \arrow[rr, equals] & \arrow[d, phantom, "\scriptstyle\Downarrow\eta"] & b \arrow[dr, "f'"]  \\ & a \arrow[ur, "u" description] \arrow[rr, equals] & \arrow[u, phantom, "\scriptstyle\Downarrow\epsilon"] & a &  & a \arrow[ur, "u" description] \arrow[rr, equals] & \arrow[u, phantom, "\scriptstyle\Downarrow\epsilon'"] & a & & & a \arrow[ur, "u" description] \arrow[rr, equals] & \arrow[u, phantom, "\scriptstyle\Downarrow\epsilon"] & a  \arrow[ur, "u" description] \arrow[rr, equals] & \arrow[u, phantom, "\scriptstyle\Downarrow\epsilon'"] & a.
\end{tikzcd}
\]
By uniqueness of pasting composition, the right-hand pasted composite can be computed by first applying the triangle equality of $f \dashv u$ to reduce the composite $u\epsilon \cdot \eta u$ to the identity of $f$, and then applying the triangle equality from $f' \dashv u$ to reduce the composite $\epsilon' f \cdot f'\eta$ to an identity. A similar computation proves that the 2-cells displayed above-left are also inverses when composed the other way around, exhibiting the desired isomorphism $f \cong f'$.

The aim in this paper is to prove the corresponding pasting theorem for $(\infty,2)$-cat\-e\-gories, where we must alter the meaning of both ``unique'' and ``composite.'' In a weak infinite-dimensional category, composition is no longer a function but a generalized relation witnessed by higher dimensional cells, whose dimension is determined by the number of cells being composed. More exactly a composite of a collection of cells is witnessed by a homotopy coherent diagram, containing subdiagrams that witness binary composition relations, and ternary composition relations and so on. The composite cells by themselves are not unique but they are all equivalent, connected by higher cells that are weakly invertible.  In fact, more is true: our main theorem proves that a suitably-defined space whose points are homotopy coherent diagrams witnessing the composites of a given pasting scheme is contractible.

{
\renewcommand{\thethm}{\ref{cor:contractible-composites}}
\begin{cor}
The space of composites of any pasting diagram in any $(\infty,2)$-cat\-e\-gory is contractible.
\end{cor}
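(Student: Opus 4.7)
The plan is to deduce this as a formal consequence of the paper's central result advertised in the abstract: that for any pasting scheme $G$, the free 2-category $\mF G$ is, as an $(\infty,2)$-category, a homotopy colimit of its constituent cells. Before using this, I would fix terminology. Given an $(\infty,2)$-category $\cC$, a \emph{pasting diagram of shape $G$} in $\cC$ should be understood as a homotopy coherent assignment of a cell of $\cC$ to each cell of $G$ compatibly with the boundary relations---equivalently, a point of the homotopy limit
\[
\text{holim}_{c} \, \text{Map}(\mF c, \cC)
\]
indexed by the same diagram whose colimit realizes $\mF G$, where $\mF c$ denotes the free 2-category on a single cell. A \emph{composite} of a pasting diagram $D$ is any point of $\text{Map}(\mF G, \cC)$ that restricts to $D$, and the \emph{space of composites} is the homotopy fiber of the canonical restriction map
\[
\text{Map}(\mF G, \cC) \longrightarrow \text{holim}_{c} \, \text{Map}(\mF c, \cC)
\]
over $D$.

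With this setup the deduction is purely formal. The main theorem provides an equivalence $\mF G \simeq \text{hocolim}_{c} \mF c$ in $(\infty,2)\text{-}\Cat$; applying the functor $\text{Map}(-,\cC)$ converts this into an equivalence between $\text{Map}(\mF G, \cC)$ and the homotopy limit displayed above, and this equivalence is precisely the restriction map. Since weak equivalences of spaces have contractible homotopy fibers, the space of composites of any fixed pasting diagram is contractible, which is the content of the corollary.

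The entire difficulty sits in the theorem I am citing rather than in the deduction. Proving that $\mF G$ is a homotopy colimit of its cells requires a careful combinatorial analysis of pasting schemes, which the authors carry out explicitly in the simplicial categories model before transferring the statement to arbitrary models of $(\infty,2)$-categories along the familiar Quillen equivalences. The only items remaining to verify at the level of the deduction are that the restriction map above genuinely arises from the colimit cocone and that the informal notions of ``pasting diagram'' and ``composite'' in the statement of the corollary coincide with the homotopy-limit and homotopy-fiber descriptions---both routine bookkeeping given the main theorem.
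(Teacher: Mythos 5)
Your proposal is correct and follows essentially the same route as the paper: the corollary is deduced formally from the equivalence $\mG\cP \wto N\mF\cP$ (the model-independent form of the main theorem) by applying a hom functor and observing that the fiber of the resulting equivalence over a given pasting diagram is contractible. The only cosmetic difference is that the paper packages your homotopy limit over cells as the single object $\cC^{\mG\cP}$ with $\mG\cP$ the homotopy colimit of cells, and works with the internal hom in $\Cat_{(\infty,2)}$ rather than the mapping space---so its ``space of composites'' is a priori an $(\infty,2)$-category that is then observed to be a contractible $\infty$-groupoid.
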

\addtocounter{thm}{-1}
}

\subsection{Homotopically unique composition in an \texorpdfstring{$(\infty,1)$}{(infinity,1)}-category}

Before describing the proof of our result, it is instructive to recall the proof of the homotopical uniqueness of composition in an $(\infty,1)$-category. The specifications of each particular model of $(\infty,1)$-categories build in the homotopical uniqueness of certain composites of 1-cells. For instance, quasi-categories are defined to be simplicial sets $X$ that are local for the inner horn inclusions $\cwtoo{\Lambda^n_k}{\Delta^n}$ meaning that $\fwtoo{X^{\Delta^n}}{X^{\Lambda^n_k}}$ is a trivial fibration. It follows that the inclusion $\Gamma^n\subset\Delta^n$ of the ``spine'' formed by the longest composable path of edges is a weak equivalence, and $\fwtoo{X^{\Delta^n}}{X^{\Gamma^n}}$ is a trivial fibration, so in particular the fibers are contractible Kan complexes.
Thus, in a quasi-category any sequence of $n$-composable morphisms---the 1-dimensional analogue of a pasting diagram---has a homotopically unique composite, understood as a point in this contractible space.\footnote{The above discussion extends to 1-computads: Joyal observes that for any 1-skeletal simplicial set $G$, the unit component $\cwtoo{\eta_G\colon G}{N\ho{G}}$ of the nerve $\vdash$ homotopy category adjunction is inner anodyne.}

\subsection{Homotopically unique composition in an \texorpdfstring{$(\infty,2)$}{(infinity,2)}-category}

To state and prove an analogous result for pasting composition in an $(\infty,2)$-category, we must first introduce 2-dimensional pasting schemes, which may be presented by finite connected directed plane graphs with a certain property of ``anchorability'' (in terminology we borrow from Johnson and Yau). After recalling the definition in \S\ref{ssec:pasting-schemes}, in \S\ref{ssec:sub-pasting} we describe various sub pasting schemes of a given pasting scheme that are full in some sense. These are used in \S\ref{ssec:2cat} to give a more explicit description of the free 2-category $\mF\cP$ generated by a pasting scheme $\cP$ than we have found in the literature. For instance, it follows easily from Power's theorem about unique composites of pasting schemes (\cref{thm:power}) that the hom category $\mF\cP(x,y)$ between any two objects in a pasting scheme is a poset (\cref{cor:hom-poset}), which we then identify with a full sublattice of a cube whose dimension is determined by the number of atomic 2-cells that lie between the  top-most and bottom-most paths from the source object to the target one---see \cref{thm:pasting-poset} for a precise statement. This explicit characterization makes it easy to understand the effect of adding a new atomic 2-cell to the top or bottom of a pasting scheme, the procedure which forms the basis of an inductive proof of our main results.

As is the case for $(\infty,1)$-categories, the various models of $(\infty,2)$-categories build in the requirement that certain pasting schemes have homotopically unique composites. For instance, the pasting schemes that can be decomposed as a horizontal composite of vertical composites of atomic 2-cells define the objects of a category $\TTheta_2$. Rezk's $\TTheta_2$-spaces are  simplicial presheaves on this category for which these particular pasting schemes are required to have a homotopically unique composite. This suggests that the proof that a generic pasting scheme has a homotopically unique composite can be simplified by the judicious choice of a model for $(\infty,2)$-categories.

After some experimentation, we chose to work with Lurie's model of $(\infty,2)$-cat\-e\-gories as categories enriched over quasi-categories, extending Bergner's model of $(\infty,1)$-cat\-e\-gories as categories enriched over Kan complexes. An advantage of this model is that when a cell is attached in a single hom of a simplicially enriched category, its horizontal composites are automatically attached as well. So there is a sense in which this model's encoding of the 2-graph formed by the cells of a pasting scheme is closer to the free 2-category generated by the pasting scheme than it would be in other models. 

In \S\ref{ssec:simplicial-model}, we review the essential features of this model of $(\infty,2)$-cat\-e\-gories and use it to give a precise statement of our main theorem about the homotopical uniqueness of pasting composition. A pasting scheme $\cP$ can be understood as a special case of a \textbf{2-computad}: its data involves a directed graph of objects and atomic 1-cells together with a collection of atomic 2-cells with specified source and target paths. Thus, we define \textbf{pasting diagram}  in an $(\infty,2)$-category modeled as a category enriched over quasi-categories to be a simplicially enriched functor out of the free simplicially enriched category $\mG\cP$ defined by gluing together the objects, atomic 1-cells, and atomic 2-cells of a pasting scheme (see \cref{defn:graph-of-pasting-scheme}). 

By contrast, the homotopy coherent diagram generated by the pasting scheme $\cP$ is  indexed by the nerve of the free 2-category $\mF\cP$ generated by $\cP$. In the quasi-categorically enriched categories model of $(\infty,2)$-categories, the nerve functor from 2-categories to $(\infty,2)$-categories is defined by applying the ordinary nerve functor $N$ from 1-categories to $(\infty,1)$-categories to each hom\footnote{There are subtleties in identifying the ``correct'' nerve functor from 2-categories to $(\infty,2)$-categories in a particular model. Although relatively unusual, in this model of $(\infty,2)$-categories the strict point-set level nerve coincides with the homotopical one. Further discussion of this point appears in \cite{MOR:nerves}.}; following standard notational conventions, we write $N_*\mF\cP$ for the resulting quasi-categorically enriched category.  Thus, to prove the homotopical uniqueness of pasting composition, our task is to demonstrate that the canonical inclusion $\mG\cP \to N_*\mF\cP$ is a trivial cofibration in the model structure for $(\infty,2)$-categories reviewed in \cref{def sCat model structure}.

This is the claim made explicit by \cref{thm:uniquepasting}, whose proof is by an inductive argument developed in \S\ref{ssec:base-cases} and \S\ref{ssec:general-inductive-step}. In \S\ref{ssec:model-independence}, we explain how this result implies a similar equivalence in any model of $(\infty,2)$-categories as defined by Barwick and Schommer-Pries \cite{BSP}. In particular, we define the space of composites of a given pasting scheme in an $(\infty,2)$-category and prove that it is a contractible $\infty$-groupoid.

The induction that proves \cref{thm:uniquepasting} considers the effect of attaching a new atomic 2-cell along the bottom of a pasting scheme. We divide our analysis of the inductive step into two cases, depending on whether the codomain of this attached 2-cell is an atomic 1-cell or a composite of atomic 1-cells. In both cases, the simplicially enriched categories $\mG\cP$ and $N_*\mF\cP$ have the same objects so it remains only to analyze their homs. The procedure of attaching a new atomic 2-cell has a much more dramatic effect on $N_*\mF\cP$ than on $\mG\cP$; essentially the difference is between the pushout of nerves of categories---these being the hom-posets $\mF\cP(x,y)$ analyzed in \S\ref{ssec:2cat}---and the nerve of the category defined by the pushout. In \S\ref{ssec:dwyer}, we observe that one of the functors in the span that defines the pushout under consideration is a \textbf{Dwyer map}, in terminology introduced by Thomason \cite{ThomasonModelCat}. 
In a companion paper \cite{HORR-Dwyer}, we prove that the comparison map between the pushout of the nerves of a span in which one functor is a Dwyer map and the nerve of the pushouts is a weak categorical equivalence of simplicial sets, extending a related observation made by Thomason. We state this result, which may be of independent interest to technicians working with $(\infty,1)$- or $(\infty,2)$-categories, in \S\ref{ssec:dwyer-pushout} and sketch its proof.

\subsection{Related work}

After posting this preprint to the arXiv, we learned about closely related work by Tobias Columbus in an unpublished 2017 PhD thesis \cite{Columbus:2-cat}. His Lemma 2.2.10 achieves a similar description of the free 2-category generated by a pasting scheme as appears in our \cref{thm:pasting-poset} and notes in his Remark 3.1.7 the result that we prove in \cref{prop:pasting-computad}, that the homwise nerves of such 2-categories are cofibrant simplicial categories. He uses these results to prove his Theorem C,  demonstrating that any inclusion of ``complete pasting diagrams''---which, roughly, refer to pasting diagrams with certain specified composites of subdiagrams---define homwise inner anodyne inclusions. Our \cref{thm:uniquepasting} corresponds to the special case of the inclusion of a minimal complete pasting diagram $\mG\cP$, with no specified composites, into the maximal one $N_*\mF\cP$, with all composites specified. While these results are quite similar our proof techniques differ substantially. His proof of \cref{thm:uniquepasting}  makes use of the combinatorial characterization of \cref{thm:pasting-poset}, while our proof does not use this result, instead relying on the results about Dwyer maps that we describe in \S\ref{sec:dwyer}. 

Finally, Columbus concludes in Theorem D that the space of composites of a pasting scheme in a quasi-categorically enriched category is a contractible Kan complex. A posteriori, of course, his space is equivalent to the contractible $\infty$-groupoid defined in our \cref{cor:contractible-composites}, though it is tricky to give a direct comparison; see \cref{rmk:contractible-composites} for more discussion.

\section{Pasting schemes and their free 2-categories}\label{sec:pasting}

In this section, we review classical results concerning pasting composition in a 2-category or bicategory. 

We introduce our precise notion of \emph{pasting scheme} in \S\ref{ssec:pasting-schemes} and review its key properties. Our definition differs superficially from but fundamentally coincides with those appearing in \cite{PowerPasting} and \cite{JY,JYbook}; see \cref{rmk:jy-comparison} and \cref{rmk:power-comparison}. There are other related definitions and approaches to pasting diagrams for higher categories, including Johnson's pasting diagrams \cite{Johnson89}, Street's parity complexes \cite{Street:Parity}, Steiner's directed complexes \cite{Steiner93}, Steiner's augmented directed chain complexes \cite{SteinerEmbedding}, Hadzihasanovic's
constructible directed complexes \cite{Hadzihasanovic}, Forest's torsion-free complexes \cite{Forest:Unifying}, and so on. Some of these have the advantage of being entirely combinatorial, without making use of a planar embedding, but have the disadvantage of being less visually intuitive.

In \S\ref{ssec:sub-pasting}, we identify various sub pasting schemes of a given pasting scheme, for instance those defined by specifying either the source and target vertices or the source and target paths. Finally, in \S\ref{ssec:2cat}, we regard a pasting scheme as a \emph{$2$-computad} and consider the free 2-category it generates. We observe that Power's pasting theorem tells us that the hom-categories in this 2-category are all posets and we identify those posets with full subposets of cubes. 

\subsection{Pasting schemes}\label{ssec:pasting-schemes}

Consider a plane graph, by which we mean a finite connected directed graph $G$, embedded in the plane $\CC$. The connected components of $\CC \backslash G$ are the \textbf{open faces}. Their closures are called \textbf{faces}. The \textbf{boundary} of a face is the complement of the open face in the closed face, which coincides exactly with the components of the graph that touch that face. The unbounded component is called the \textbf{exterior face} while the remaining faces are \textbf{interior faces}.

By \cite[\textsection10.2]{BondyMurty}:
\begin{quote}
    In a connected plane graph, the boundary of the face can be regarded as a closed walk, in which each cut edge of the graph that lies in the boundary is traversed twice.
\end{quote}
Here a \textbf{closed walk} is an undirected cycle, which may pass through a given edge or vertex in the graph multiple times, or not at all. A \textbf{cut edge} is an edge whose removal increases the number of connected components of the graph. The cut edges are incident to a single face, either the exterior face or an interior face. The non-cut edges are incident to exactly two faces. Since our plane graph is directed, we can use the orientation of the plane to refer to one of these faces as the ``right face'' and the other face as the ``left face.''

\begin{defn}\label{defn:exterior-anchorable} The exterior face in a plane graph $G$ is \textbf{anchorable} if the set of edges that are incident to the exterior face on their left and the set of edges that are incident to the exterior face  on their right both form non-empty directed paths from a source vertex $s_G$ to a target vertex $t_G$. 
We refer to the first path as the \textbf{source path} $\dom_G$ of the graph and the second path as the \textbf{target path} $\cod_G$.
\end{defn}

\begin{defn}\label{defn:interior-anchorable} An interior face $F$ in a plane graph is \textbf{anchorable} if the edges that are incident to the face $F$ on their right and the set of edges that are incident to the face $F$ on their left both form non-empty directed paths from a vertex $s_F$ to a vertex $t_F$ which are moreover ``internally disjoint.''\footnote{Two paths with a common source and target are \textbf{internally disjoint} in the sense defined in \cite[\textsection5.1]{BondyMurty}, if they have no common edges and no common interior vertices. In particular this requirement prohibits the boundary of an interior face from having any cut edges.} We refer to the first path as the \textbf{source path} $\dom_F$ of the face $F$ and the second path as the \textbf{target path} $\cod_F$.
\end{defn}

In particular, if an interior face $F$ is anchorable, then the boundary of that face defines an undirected simple cycle embedded as a simple closed curve in the plane, with no self-intersections. This cycle can be traversed by first traversing $\dom_F$ in the forward direction and then traversing $\cod_F$ in the backwards direction.

The source path and target path of an anchorable exterior face also have no self-intersections; individually, these paths are injectively embedded in the plane. But these paths may have cut edges of the graph in common, and even if the graph contains no cut edges, these paths may intersect at an interior vertex. 

A plane graph is \textbf{anchorable} if all of its faces are anchorable, as specified by \cref{defn:exterior-anchorable} and \cref{defn:interior-anchorable}.  For later use we note the following partitionings of the edges of an anchorable plane graph:

\begin{lem}\label{lem:partitioning}
The edges of an anchorable plane graph can be partitioned in the following two ways:
\begin{enumerate}[label=(\roman*)]
    \item\label{itm:right-partition} In one partition, the components are the source paths of the interior faces plus the target path of the exterior face.
    \item\label{itm:left-partition} In the other partition, the components are the target paths of the interior faces plus the source path of the exterior face.
\end{enumerate}
\end{lem}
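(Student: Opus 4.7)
The plan is to prove both partitions by examining, for each edge of $G$, which face lies to its right (for (i)) or to its left (for (ii)). The key preliminary observation is that no interior face of an anchorable plane graph has a cut edge on its boundary: the source and target paths of an interior face $F$ are internally disjoint, and the footnote following \cref{defn:interior-anchorable} records that this prohibits cut edges on the boundary of $F$. Hence every cut edge of $G$ is incident only to the exterior face (which is traversed twice along it in the boundary walk).

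For part (i), I would argue as follows. A non-cut edge $e$ has exactly two incident faces, one on its right and one on its left. If the right-face is an interior face $F$, then by \cref{defn:interior-anchorable} the edge $e$ belongs to $\dom_F$; if the right-face is the exterior face, then by \cref{defn:exterior-anchorable} the edge $e$ belongs to $\cod_G$. A cut edge $e$ is incident only to the exterior face, so it lies in $\cod_G$ but in no $\dom_F$ for any interior $F$. Thus each edge lies in at least one of the claimed components. The components are pairwise disjoint because if $e \in \dom_{F_1} \cap \dom_{F_2}$ for distinct interior faces $F_1 \neq F_2$, both would lie on the right of $e$, which is impossible; similarly $\cod_G$ and $\dom_F$ cannot share an edge since an interior face $F$ cannot lie on the right of an edge whose right-face is the exterior face.

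Part (ii) is dual: applying the same analysis to the face lying on the \emph{left} of each edge, every non-cut edge lies in exactly one of $\cod_F$ (for the interior face $F$ on its left) or $\dom_G$ (if the exterior face lies on its left), and every cut edge lies in $\dom_G$ alone. Disjointness of the components follows by the same reasoning.

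There is no serious obstacle here; the statement is essentially a bookkeeping consequence of the definitions of anchorability once one has identified that cut edges are confined to the exterior face. The only point requiring a moment of care is ruling out coincidences between $\cod_G$ and the $\dom_F$'s (and symmetrically in (ii)), which is handled cleanly by the ``face on the right/left'' dichotomy.
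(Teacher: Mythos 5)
Your proof is correct and takes essentially the same approach as the paper's: both partition the edges according to their right-hand face for (i) and their left-hand face for (ii), then invoke the anchorability definitions to identify the resulting blocks with the claimed source/target paths. Your extra care with cut edges and disjointness is consistent with (and slightly more explicit than) the paper's remark that cut edges are incident only to the exterior face.
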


Hence: 
\begin{itemize}
    \item Each edge is a source of at most one interior face, and if it is not in the source path of any interior face it is in the exterior target path.
    \item Each edge is a target of at most one interior face, and if it is not in the target path of any interior face, it is in the exterior source path.
    \item The exterior source edges are either sources of exactly one interior face or they are also exterior targets.
    \item The exterior target edges are either targets of exactly one interior face or they are also exterior sources.
\end{itemize}

Note by  \cite[\textsection10.2]{BondyMurty} that the edges that are both exterior sources and exterior targets --- those edges that are incident only to the exterior face --- are exactly the cut edges, since the anchorability condition prohibits cut edges from being incident to interior faces. 

\begin{proof}
Each edge is incident to its right-hand face and its left-hand face, and no other faces. The partitions \ref{itm:right-partition} and \ref{itm:left-partition} partition the edges of the anchorable plane graph according to their right-hand faces and left-hand faces, respectively. If the right-hand face of an edge is an interior face, then by \cref{defn:interior-anchorable} the edge is in the source path of that face. If the right-hand edge is an exterior face, then by \cref{defn:exterior-anchorable} the edge is in the exterior target path. This establishes the right-hand face partition \ref{itm:right-partition}; the left-hand face partition \ref{itm:left-partition} is similar.
\end{proof}

In a directed graph, a vertex is a \textbf{local source} if it is only incident to outgoing edges and a \textbf{local sink} if it is only incident to incoming edges.

\begin{defn}\label{defn:pasting-scheme}
A \textbf{pasting scheme} is a finite connected anchorable plane graph in which the source vertex of its exterior face is the only local source  in the directed graph and the target vertex of its exterior face is the only local sink in the directed graph.
\end{defn}

In the next remark, we provide several non-examples of pasting schemes, while examples are provided below in \cref{ex: theta 2,ex: pasting schemes}.
We could also consider the graph with a single vertex and no edges as a pasting scheme, which strictly speaking is excluded by the requirement that $\dom_G$ and $\cod_G$ are non-empty in \cref{defn:exterior-anchorable}. We leave it to the reader to consider the statements of our results in this trivial case.

\begin{rmk}\label{rmk:jy-comparison} Our notion of ``anchorable'' face is very similar to Johnson and Yau's notion of ``anchored'' face \cite[Definition 3.2.6]{JYbook}; the only difference is that for our interior faces to be anchorable we require in addition that the source and target paths are internally disjoint.

Our notion of pasting scheme differs from Johnson and Yau's in two respects. Firstly, we explicitly require that the source and target vertices of the exterior face are the unique sources and sinks in the directed graph. We believe this is an implicit assumption in \cite{JYbook} but prefer to make it explicit so as to exclude pathological examples like:
\[
\begin{tikzcd} 
& & & & & & & \bullet \arrow[dr]\\ 
\bullet \arrow[rr, bend left] \arrow[rr, bend right] & \bullet \arrow[r] & \bullet & \bullet \arrow[rr, bend left] \arrow[rr, bend right] & \bullet  & \arrow[l] \bullet & \bullet \arrow[ur] \arrow[dr] & \bullet \arrow[u] \arrow[l] \arrow[r] \arrow[d] & \bullet \\
& & & & & & & \bullet \arrow[ur]
\end{tikzcd}
\]

More significantly, we drop the data of what Johnson and Yau call a \emph{pasting scheme presentation}, which is more natural to consider in the bicategorical context than it is here, but our pasting schemes always admit a pasting scheme presentation, as we demonstrate in  \cref{cor:pasting-scheme-presentations}.
\end{rmk} 

\begin{rmk}\label{rmk:power-comparison} Power defines a pasting scheme to be a finite directed plane graph that contains no directed cycles that is equipped with distinct vertices $s$ and $t$ along its exterior face so that for every other vertex $v$ there exist directed paths from $s$ to $v$ and $v$ to $t$. These properties follow from our definition --- see \cref{lem:acyclicity} and \cref{lem:global-min-max} --- and indeed Power proves in \cite[Proposition 2.6]{PowerPasting} that his definition is equivalent to the one given here. 
\end{rmk}

We adopt the following notation and terminology for the data of a pasting scheme with the aim of connecting it to the free 2-category it generates.

\begin{ntn}
The data of a pasting scheme $\cP$ includes:
\begin{itemize}
    \item A finite collection of
    \begin{itemize}
        \item \textbf{objects} --- the vertices of the plane graph
        \item \textbf{atomic 1-cells} --- the edges of the plane graph
        \item \textbf{atomic 2-cells} --- the interior faces of the plane graph
    \end{itemize}
    \item Two distinguished objects --- the source and sink vertices $s_\cP$ and $t_\cP$ that are the source and target of the exterior face.
    \item A \textbf{source path} of composable 1-cells --- the edges of $\dom_\cP$ --- and a \textbf{target path} of composable 1-cells --- the edges of $\cod_\cP$.
    \item Each atomic 2-cell $\alpha$ also has its source and target objects $s_\alpha$ and $t_\alpha$ and source and target paths $\dom_\alpha$ and $\cod_\alpha$ of 1-cells. 
\end{itemize}
We draw a double arrow in the interior region of each interior face pointing from the source path to the target path:
\[
\begin{tikzcd}
\bullet \arrow[loop left, phantom, near end, "\scriptstyle s_\cP=s_\alpha" ]  \arrow[r, bend left] \arrow[r, bend right] \arrow[r, phantom, "\Downarrow\scriptstyle\alpha"] & \bullet \arrow[loop above, phantom, near end, "\scriptstyle t_\alpha"] \arrow[r ] & \bullet \arrow[loop below, phantom, near end,  "\scriptstyle s_\beta=s_\gamma"]  \arrow[rr, bend right=45, "\displaystyle\Downarrow\scriptstyle\gamma" {yshift=1pt}]  \arrow[r, phantom, "\Downarrow\scriptstyle\beta"] \arrow[r, bend left] \arrow[r, bend right] &  \bullet \arrow[loop above, phantom, near end, "\scriptstyle t_\beta"] \arrow[r] & \bullet \arrow[loop right, phantom, near start, "\scriptstyle t_\gamma= t_\cP"]
 \end{tikzcd}
 \]
 \cref{lem:partitioning} implies that these double arrows are ``compatibly oriented'': if an edge lies between two interior faces one double arrow will point towards that edge while the other will point away from it.
\end{ntn}

Following the convention used in the literature \cite{PowerPasting,JY}, we write paths as concatenations of edges in diagrammatic order (``left to right'') as opposed to composition order (``right to left'').

\begin{ex}\label{ex: theta 2}
The objects of Joyal's category $\TTheta_2$ define pasting schemes. In the notation introduced by Berger \cite{Berger-Wreath}, $[n]([k_1],\ldots, [k_n])$ corresponds to the pasting scheme with 
\begin{itemize}
\item $n+1$ objects $0, 1,\ldots, n$ and 
\item $k_j+1$ atomic 1-cells $e_{j,0},\ldots, e_{j,k_j}$ connecting the vertex $j-1$ to the vertex $j$.
\item $k_1+\cdots +k_n$ atomic 2-cells, each with a single source edge $e_{j,i}$ and a single target edge $e_{j,i+1}$.
\end{itemize}
If $k_j=0$ then the edge $e_{j,0}$ is a cut edge, but no other edges are cut edges. The source and target paths of the exterior face are given by $e_{1,0}\cdots e_{n,0}$ and $e_{1,k_1}\cdots e_{n,k_n}$. Atypically, these paths intersect at every vertex.

It would be straightforward to define a formula for an explicit embedding of this directed graph into the plane, but instead we simply illustrate by drawing a plane graph associated to the object $[4]([2],[0],[3],[0])$.
\[
\begin{tikzcd}[column sep=3em] \bullet  \arrow[r, "\displaystyle\Downarrow", "\displaystyle\Downarrow"'] \arrow[r, bend left=60] \arrow[r, bend right=60] & \bullet \arrow[r]& \bullet \arrow[r, bend left=25] \arrow[r, bend right=25]  \arrow[r, bend right=85, "\displaystyle\Downarrow"] \arrow[r, bend left=85, "\displaystyle\Downarrow"'] \arrow[r, phantom, "\displaystyle\Downarrow"] & \bullet \arrow[r] & \bullet
\end{tikzcd}
\]
\end{ex}

\begin{ex}\label{ex: pasting schemes}
Other pasting schemes are depicted below:
\[
\begin{tikzcd}
\bullet \arrow[r, bend left] \arrow[r, bend right] \arrow[rr, bend left=60, "\displaystyle\Downarrow"'] \arrow[rr, bend right=60, "\displaystyle\Downarrow"] \arrow[r, phantom, "\displaystyle\Downarrow"] & \bullet  \arrow[r, phantom, "\displaystyle\Downarrow"]  \arrow[r, bend left] \arrow[r, bend right] & \bullet
\end{tikzcd}
\qquad
\begin{tikzcd}[sep=small] & \arrow[ddr, phantom, "\displaystyle\Downarrow"]  & & \bullet \arrow[drr] \\
\bullet \arrow[ddr] \arrow[drr] \arrow[urrr] & & & & &  \bullet \arrow[drr] \arrow[ddr] & \arrow[dd, phantom, "\displaystyle\Downarrow"]  \\ \arrow[rr, phantom, "\displaystyle\Downarrow"] 
& & \bullet\arrow[uur] \arrow[urrr] \arrow[rrrr, phantom, "\displaystyle\Downarrow"] & \arrow[uu, phantom, "\displaystyle\Downarrow"]   & &~ & ~ & \bullet \\ & \bullet \arrow[ur] \arrow[rrrrr]  & & & & &  \bullet \arrow[ur]
\end{tikzcd}
\]
\[
\begin{tikzcd}[row sep=1.75em] & \bullet \arrow[dr, description] \arrow[rrr] \arrow[d, phantom, "\displaystyle\Downarrow"] & \arrow[dr, phantom, "\displaystyle\Downarrow"] & & \bullet \arrow[dr] \arrow[drr, bend left=20] & \arrow[d, phantom, "\displaystyle\Downarrow" pos=.6]  \\ \bullet \arrow[ur] \arrow[rr] \arrow[drr] &~ & \bullet \arrow[d, phantom, "\displaystyle\Downarrow"]  \arrow[r] & \bullet \arrow[ur] \arrow[rr, phantom, "\displaystyle\Downarrow"] & & \bullet \arrow[r] & \bullet \\ &&  \bullet \arrow[ur] \arrow[rr] & & \bullet \arrow[ur] \arrow[urr, bend right=20] & \arrow[u, phantom, "\displaystyle\Downarrow" pos=.6] 
\end{tikzcd}
\quad
\begin{tikzcd}[sep=small] & \bullet \arrow[dr] \arrow[rr] \arrow[d, phantom, "\displaystyle\Downarrow"]  & \arrow[d, phantom, "\displaystyle\Downarrow"] & \bullet \\ \bullet \arrow[ur] \arrow[rr] & ~ & \bullet \arrow[ur]
\end{tikzcd}
\]
\end{ex}

We make use of one of the central features of pasting schemes
\cite[Proposition 2.6]{PowerPasting}.

\begin{lem}[acyclicity of pasting schemes]\label{lem:acyclicity} The directed graph underlying a pasting scheme contains no directed cycles.
\end{lem}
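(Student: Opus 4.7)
The plan is a proof by contradiction: reduce any putative directed cycle to one whose interior contains exactly one interior face of $G$, and then derive a contradiction from the anchorability of that face.

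Assume $G$ contains a directed cycle; then it contains a simple directed cycle. Among all such, choose one $C$ whose enclosed bounded region $R$ (provided by the Jordan curve theorem) contains the fewest possible interior faces of $G$. Since $s_\cP$ and $t_\cP$ are respectively the unique local source and sink, neither can lie on a directed cycle, and so neither lies on $C$; moreover each lies on the boundary of the exterior face, which, being connected and unbounded, cannot extend into the bounded region $R$. So $s_\cP, t_\cP \notin R \cup C$, and every vertex of $R \cup C$ has both incoming and outgoing edges.

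I next argue that $R$ must consist of a single interior face of $G$. Suppose for contradiction that $R$ contains an interior vertex $v$. Following outgoing edges from $v$ produces a walk contained in $\overline{R}$, which must either revisit a vertex (yielding a simple directed cycle strictly inside $R$ with fewer enclosed faces, contradicting minimality) or first encounter $C$ at some vertex $w$. Applying the symmetric argument to incoming edges produces some $u \in C$ and, after removing redundant loops, a simple directed path $P \colon u \to v \to w$ whose interior lies in the interior of $R$. If $u = w$, then $P$ itself is a smaller simple directed cycle; otherwise $P$ divides $R$ into two non-empty sub-regions, and concatenating $P$ with the arc of $C$ from $w$ back to $u$ gives a simple directed cycle enclosing strictly fewer faces than $C$. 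Either outcome contradicts minimality. A completely analogous concatenation argument rules out any edge in $R$ with both endpoints on $C$. Hence $R$ is a single interior face $F$, and $C$ is its boundary closed walk.

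By \cref{defn:interior-anchorable}, the boundary of $F$ is obtained by concatenating the non-empty directed path $\dom_F$ with the reversal of the non-empty directed path $\cod_F$; traversing the latter reverses the orientation of each of its edges, contradicting the assumption that $C$ is a directed cycle.

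The main obstacle is verifying carefully that the topological splitting argument above genuinely yields a simple directed cycle enclosing strictly fewer interior faces than $C$. This rests on applying the Jordan curve theorem inside the simply connected region $\overline{R}$ to confirm that the path $P$ (or a chord edge) partitions $R$ into two non-empty planar sub-regions, each containing at least one interior face of $G$.
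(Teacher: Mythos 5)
Your proposal is correct and follows essentially the same strategy as the paper's proof: take a simple directed cycle enclosing the fewest faces, use directed paths through interior vertices (or chord edges) to split the region and contradict minimality, and use anchorability to rule out a cycle bounding a single interior face. The only difference is organizational — the paper invokes anchorability at the outset to force the interior to be non-trivial, whereas you reduce to the single-face case last — but the ingredients and the key minimality argument are identical.
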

\begin{proof}
If a directed cycle in a pasting scheme intersects itself, then that cycle contains smaller directed cycles that define simple closed curves. So we only consider directed cycles that are simple closed curves.

Choose a directed cycle that surrounds the fewest number of faces. Since an anchored face cannot be bounded by a directed cycle, some portion of the graph must lie in the region inside of the cycle.

If the region inside of the cycle contains only edges, then each edge must have source and target vertex along the cycle. Any one of these edges partitions the cycle region into two smaller regions and the boundary of exactly one of those two regions will be a directed cycle. This contradicts our hypothesis that the cycle was chosen to surround the fewest number of faces.

Thus, there must be a vertex in the region surrounded by the cycle. That vertex and every other vertex in the interior region of the cycle must have at least one incoming and at least one outgoing edge, because a pasting scheme necessarily contains  exactly one source vertex and exactly one sink vertex and these are required to lie on the exterior of the graph. Pick one interior vertex and consider any directed path that passes through it. That directed path can be extended so that it either intersects itself inside the cycle, which again contradicts minimality of the cycle, or hits the boundary of the cycle in both directions. But now this directed path partitions the cycle region into two smaller regions and the boundary of exactly one of those two regions will be a directed cycle. This contradicts our hypothesis that the cycle was chosen to surround the fewest number of faces.
\end{proof}

\begin{defn} We define an ordering of the vertices in an anchorable plane graph by declaring that $x \peq y$ if and only if there is a directed (possibly empty) path of edges from $x$ to $y$. 
\end{defn}

This ordering is reflexive and transitive by definition and can easily be seen to be antisymmetric. Any directed paths witnessing the relations $x \peq y$ and $y \peq x$ form a cycle, which contradicts  the acyclicity proven in \cref{lem:acyclicity},  unless $x=y$. Thus $\peq$ is a partial order. We write $x \prec y$ to mean that $x \peq y$ and $x \neq y$.

We now prove that the source and target vertices for a pasting scheme define initial and terminal vertices for the partial order $\peq$.

\begin{lem}\label{lem:global-min-max} Let $s_\cP$ and $t_\cP$ denote the source and target vertices for the exterior face of a pasting scheme $\cP$. Then for any vertex $v$ we have $s_\cP \peq v \peq t_\cP$.
\end{lem}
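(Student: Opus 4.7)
The plan is to prove both inequalities by running a greedy directed walk and using acyclicity together with the uniqueness of local sources/sinks.

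First I would establish $s_\cP \peq v$ for an arbitrary vertex $v$. Starting from $v$, I construct a sequence of vertices $v = v_0, v_1, v_2, \ldots$ where $v_{i+1}$ is any vertex such that there is an edge $v_{i+1} \to v_i$. Such a vertex exists whenever $v_i$ is not a local source, since a vertex is a local source precisely when it has no incoming edges. The key observation is that this sequence cannot repeat a vertex: if $v_i = v_j$ for some $i < j$, then concatenating the edges $v_j \to v_{j-1} \to \cdots \to v_i$ produces a directed cycle in $\cP$, contradicting \cref{lem:acyclicity}. Since $\cP$ has only finitely many vertices, the sequence must terminate, and it can only terminate by reaching a local source. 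By \cref{defn:pasting-scheme} the only local source in $\cP$ is $s_\cP$, so the sequence terminates at $s_\cP$, yielding a directed path from $s_\cP$ to $v$ and hence $s_\cP \peq v$.

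For the second inequality $v \peq t_\cP$, I would run the dual argument: starting from $v$, construct a forward sequence $v = w_0, w_1, w_2, \ldots$ by choosing $w_{i+1}$ to be the target of some outgoing edge of $w_i$, which exists unless $w_i$ is a local sink. Again acyclicity prevents repetition, finiteness forces termination, and the uniqueness of the local sink forces termination at $t_\cP$, yielding $v \peq t_\cP$.

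I do not expect any substantive obstacle here; the two ingredients—acyclicity and the uniqueness of the global source and sink baked into \cref{defn:pasting-scheme}—combine cleanly. The only point requiring minor care is the definitional check that ``has no incoming edges'' is precisely the condition of being a local source (and dually for sinks), so that the greedy walk can indeed be continued at every non-extremal vertex.
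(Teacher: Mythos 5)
Your proof is correct and follows essentially the same strategy as the paper's: a greedy directed walk from $v$ (backwards to reach $s_\cP$, forwards to reach $t_\cP$), terminating by finiteness and acyclicity (\cref{lem:acyclicity}) at the unique local source or sink guaranteed by \cref{defn:pasting-scheme}. The only cosmetic difference is that you phrase termination via non-repetition of vertices while the paper phrases it as ``eventually hit the sink or form a cycle,'' which is the same argument.
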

\begin{proof}
If $v \neq t_\cP$ then by \cref{defn:pasting-scheme}, the vertex $v$ has an outgoing edge. The target of that outgoing edge defines another vertex $w$. If $w= t_\cP$ we have constructed our desired path. Otherwise $w \neq t_\cP$ and has an outgoing edge, and we can repeat this process. By finiteness, either we eventually hit $t_\cP$ or we form a directed cycle, which is prohibited by \cref{lem:acyclicity}. The path from $s_\cP$ to $v$ is obtained similarly.
\end{proof}

\subsection{Sub pasting schemes}\label{ssec:sub-pasting}

In this section we prove that any pasting scheme contains various sub pasting schemes that are full on atomic 2-cells and in certain cases also on atomic 1-cells. See \cref{prop:parallel-arrow-pasting-scheme},  \cref{cor:sub-pasting-scheme}, and \cref{cor:delete-inner-face} for precise statements. We start with a series of definitions and lemmas that will support these results. 

In a plane graph, the edges incident to a fixed vertex $v$ inherit a clockwise cyclic ordering from the orientation of the plane. The following lemma tells us that in a pasting scheme, the incoming edges and outgoing edges do not mix in this cyclic order.
\[
\begin{tikzcd}[row sep=small] & \bullet \arrow[dr] \ar[dl, dotted, bend right=20, no head] & & \bullet \ar[dr, dotted, bend left=20, no head] \\ \bullet \arrow[rr]  \ar[dr, dotted, bend right=20, no head] & & v\arrow[dr] \arrow[rr] \arrow[ur] & & \bullet \ar[dl, dotted, bend left=20, no head]  \\  & \bullet \arrow[ur] & & \bullet
\end{tikzcd}
\]

\begin{lem}\label{lem prohibited local configuration}
The following configuration of distinct edges, possibly with other edges in between, does not appear in the cyclically-ordered set of edges incident to a vertex $v$ in a pasting scheme. 
\[
\begin{tikzcd}[sep=small]
u \arrow[dr, "e"] & & w \\ 
& v \arrow[dl, "f"'] \arrow[ur, "b"]  \\ 
p  & & q \arrow[ul, "a"'] 
\end{tikzcd}
\]
\end{lem}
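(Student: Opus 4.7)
The plan is to derive a contradiction by combining the forbidden alternation at $v$ with the acyclicity and unique source/sink of the pasting scheme. Suppose for contradiction that such a configuration occurs with distinct edges $e \colon u \to v$, $b \colon v \to w$, $a \colon q \to v$, and $f \colon v \to p$ appearing in the prescribed cyclic order around $v$. By \cref{lem:acyclicity} and \cref{lem:global-min-max}, there exist directed paths in $\cP$ from $s_\cP$ to each of $u$ and $q$, and from each of $w$ and $p$ to $t_\cP$. Concatenating each of these with the associated edge at $v$ yields four arcs in the planar embedding: two \emph{upstream} directed paths $\gamma_e$ and $\gamma_a$, both running from $s_\cP$ to $v$ and terminating with the edges $e$ and $a$, respectively; and two \emph{downstream} directed paths $\gamma_b$ and $\gamma_f$, both running from $v$ to $t_\cP$ and beginning with $b$ and $f$, respectively. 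The hypothesized cyclic order at $v$ forces these four arcs to alternate between upstream and downstream as one reads their initial edges at $v$ cyclically.

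Next I would invoke a Jordan curve argument in the plane. Choose $x$ to be the vertex of $\gamma_e$ closest to $v$ (reading backward from $v$) that also belongs to $\gamma_a$, and replace $\gamma_e$ and $\gamma_a$ with the subarcs $\gamma_e'$ and $\gamma_a'$ truncated to end at $x$; by construction these subarcs share only the endpoints $v$ and $x$, yet still begin at $v$ with the edges $e$ and $a$. Their union is then a simple closed curve $C$ in the plane, and the Jordan curve theorem partitions its complement into two open regions. Because $\gamma_b$ and $\gamma_f$ interleave the two upstream arcs at $v$ in the cyclic order, the initial edges $b$ and $f$ emanate from $v$ into opposite regions of this partition. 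Since both $\gamma_b$ and $\gamma_f$ must reach the common endpoint $t_\cP$, which lies in only one of the two regions, one of them is forced to pass through a vertex $y$ of $C$ in order to move into the region containing $t_\cP$.

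The main obstacle is then converting this forced passage through $C$ into a genuine contradiction, since meeting $C$ at a shared vertex is permitted by planarity alone. The directed structure of $\cP$ resolves this: the vertex $y$ lies on $\gamma_e'$ or on $\gamma_a'$, so the corresponding subpath of the upstream arc from $y$ to $v$ is a directed path, while the portion of the relevant downstream arc from $v$ to $y$ is itself a directed path. Concatenating these two directed paths produces a directed cycle through $v$, contradicting \cref{lem:acyclicity}. This rules out the forbidden configuration and completes the proof.
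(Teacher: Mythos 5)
Your proof is correct and is essentially the paper's own argument run in the dual direction: the paper builds its Jordan curve from the two outgoing edges $b,f$ together with downstream paths to a common meeting vertex (chosen of minimal total length to make the curve simple) and gets a contradiction by forcing a path $s_\cP \to q$ to cross it, whereas you build the curve from the two incoming edges $e,a$ together with upstream paths truncated at their last common branch vertex and force one of the downstream paths to $t_\cP$ to cross, yielding a directed cycle. Since both rest on exactly the same ingredients---planarity, acyclicity, and reachability from $s_\cP$ and to $t_\cP$---this is the same approach, and your argument is sound.
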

\begin{proof}
As the pasting scheme has no directed cycles, $w\npeq q$, $w\npeq u$, $p\npeq q$, and $p\npeq u$.
  Consider the set of pairs of paths $(\beta,\delta)$, where $\beta$ begins at $w$, $\delta$ begins at $p$, and $\beta$ and $\delta$ end at a common vertex;
 this set is non-empty, as $w\peq t_\cP$ and $p\peq t_\cP$.
Choose $(\beta_0,\delta_0)$ so that the sum of the lengths of the paths is minimal; notice that by minimality $\beta_0$ and $\delta_0$ can only intersect at the end vertices (if $w=p$ then these paths have length zero).
\[
\begin{tikzcd}[sep=small]
u \arrow[dr, "e"] & & w \arrow[ddr, bend left=30, dashed, "\beta_0"] \\ 
& v \arrow[dl, "f"'] \arrow[ur, "b"]  \\ 
p \arrow[rrr, bend right=40, dashed, "\delta_0"']  & & q \arrow[ul, "a"']  & y 
\end{tikzcd}
\]
The vertices $v$, $q$, and $u$ cannot appear on $\beta_0$ or $\delta_0$ without creating a directed cycle. 
After removing the Jordan curve $b\beta_0 \delta_0^{-1} f^{-1}$ from the plane, we are left with two components, one of which contains $q$ and the other of which contains $u$.
Notice that $s_\cP$ is not on the Jordan curve $b\beta_0 \delta_0^{-1} f^{-1}$, and without loss of generality, assume that $s_\cP$ is in the same component of the complement as $u$.
Then any path from $s_\cP$ to $q$ in the plane graph must intersect $b\beta_0$ or $f\delta_0$; we know that $v$ will not be on such a path since $v\npeq q$.
It follows that a path from $s_\cP$  to $q$ must intersect $\beta_0$ or $\delta_0$ at some vertex $r$, implying $p\peq r \peq q$ or $w\peq r \peq q$, contradicting $p\npeq q$ or $w\npeq q$.
\end{proof}

\cref{lem prohibited local configuration} has a useful consequence: for any vertex $v$ in a pasting scheme, the set of incoming edges inherits a natural total order from the orientation imposed by the plane, and similarly the set of outgoing edges inherits a natural total order. We fix the following conventions for the directions of these orderings:

\begin{cvn}\label{cvn:inp-out-order} Let $\cP$ be a pasting scheme, and $v$ a vertex in $\cP$. Let $\inp(v)$ be the set of edges with target $v$, and $\out(v)$ be the set of edges with source $v$. We regard these as totally ordered sets as follows:
\begin{itemize}
\item $\out(v) = \{ e_1 \leq e_2 \leq \cdots \leq e_n \}$ where $e_i$ appears immediately before $e_{i+1}$ as one traverses counterclockwise. If $v= s_\cP$, then we additionally must specify that $e_1 \in \cod_\cP$ or $e_n \in \dom_\cP$.
\item $\inp(v) = \{ e_1' \leq e_2' \leq \cdots \leq e_m' \}$ where $e_i'$ appears immediately before $e_{i+1}'$ as one traverses clockwise. If $v= t_\cP$, then we additionally must specify that $e_1' \in \cod_\cP$ or $e_m' \in \dom_\cP$.
\end{itemize}
\[ \begin{tikzcd}[row sep=tiny]
\bullet \ar[ddrr, bend left=40, "e_5'" description] \\
\bullet \arrow[drr, bend left=15, "e_4'" description] &  & &  &\bullet \\
\bullet \arrow[rr, "e_3'" description] & & v \ar[rr, "e_2" description] \ar[urr, bend left=15, "e_3" description] \ar[drr, bend right=15, "e_1" description] &  & \bullet  \\
\bullet  \arrow[rru, bend right=15, "e_2'" description] &  & &   & \bullet \\
\bullet \ar[uurr, bend right=40, "e_1'" description]
\end{tikzcd} \]
We additionally write $\inp(v)_+$ and $\out(v)_+$ for the partially-ordered sets where we have added an incomparable element $\ast$.
\end{cvn}

The idea of both orderings is that we write $e \geq e'$ if $e$ and $e'$ share a common source or a common target and the edge $e$ lies ``above'' the edge $e'$. But in fact we use \cref{lem prohibited local configuration} and the convention just introduced to give a precise definition of what it means for one edge or path to lie above another. For this, we require:

\begin{defn}[path predecessor and successor]
Suppose that $p$ is a path in a pasting scheme.
If $v$ is an arbitrary vertex, define elements $\pred(p,v) \in \inp(v)_+$ and $\suc(p,v)\in \out(v)_+$
by
\begin{itemize}
\item $\pred(p,v) = e$ just when $e \in \inp(v)$ lies on the path $p$,
\item $\suc(p,v) = e$ just when $e \in \out(v)$ lies on the path $p$,
\item and otherwise these are defined to be the null points $\ast$.
\end{itemize}
\end{defn}

We indicate a few instances of the preceding definition in the following example, where we consider the dashed path $p$. Note that $\pred(p,s) = \ast$.
\[
\begin{tikzcd}[sep=small] & & & \bullet \arrow[drr] \\
{ s} \arrow[ddr, dashed,  "{\suc(p,s)}"'] \arrow[drr] \arrow[urrr] & & & & &  {v} \arrow[drr, dashed,  "{\suc(p,v)}"] \arrow[ddr] \\
& & {\bullet} \arrow[uur] \arrow[urrr, dashed,  "{\pred(p,v)}"' near start] & & & & & \bullet \\
 & {\bullet} \arrow[ur,  dashed] \arrow[rrrrr]  & & & & &  \bullet \arrow[ur]
\end{tikzcd}
\]

The partial order introduced in \cref{cvn:inp-out-order} allows us to identify when a path lies above another parallel path.
\begin{defn}\label{defn:lies above}
Suppose that $p$ and $q$ are paths from $x$ to $y$ in a pasting scheme.
If, for each vertex $v$ which appears on both $p$ and $q$ we have $\pred(p,v) \geq \pred(q,v)$ and $\suc(p,v) \geq \suc(q,v)$, then we say that $p$ \textbf{lies above} $q$.
\end{defn}

\begin{lem}\label{lem short lies above}
If $p$ and $q$ are paths from $x$ to $y$ in a pasting scheme and $p$ and $q$ do not share any edges or vertices other than $x$ and $y$, then $p$ lies above $q$ or $q$ lies above $p$.
\end{lem}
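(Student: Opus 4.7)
The plan is to reduce the lies-above condition to its values at the only shared vertices $x$ and $y$, and then use a Jordan-curve argument in the planar embedding to show that the resulting comparisons at $x$ and at $y$ must go in the same direction.

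First I would unpack \cref{defn:lies above} at the endpoints. At $x$, neither path has an edge whose target is $x$, so $\pred(p,x) = \ast = \pred(q,x)$ and the condition at $x$ reduces to comparing $e_p := \suc(p,x)$ and $e_q := \suc(q,x)$ in $\out(x)_+$. Symmetrically at $y$, $\suc(p,y) = \ast = \suc(q,y)$ and the condition reduces to comparing $f_p := \pred(p,y)$ and $f_q := \pred(q,y)$ in $\inp(y)_+$. If either path has length zero then $x = y$, and by \cref{lem:acyclicity} the other is also trivial, so $p = q$ and the lemma is vacuous; hence I may assume both paths have positive length, and then $e_p \neq e_q$ and $f_p \neq f_q$ since no edges are shared. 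After swapping $p$ and $q$ if necessary, I may assume $e_p > e_q$ in $\out(x)$, and the task reduces to showing that this forces $f_p > f_q$ in $\inp(y)$.

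The key step is planarity. Because $p$ and $q$ are simple and share only the vertices $x$ and $y$, the concatenation $C := p \cdot q^{-1}$ is a simple closed curve in the plane, and by the Jordan curve theorem it bounds a bounded open region $R$. The orientation of $C$ singles out one side as the $R$-side; by choice of labels and without loss of generality, $R$ lies to the right of $p$ as we walk from $x$ to $y$, equivalently to the left of $q$ as we walk from $x$ to $y$. I would then read off the local picture at each endpoint. At $x$, both $e_p$ and $e_q$ are outgoing rays of $C$ leaving $x$, and by \cref{lem prohibited local configuration} they lie in the single cyclic arc of outgoing edges to which \cref{cvn:inp-out-order} assigns a counterclockwise bottom-to-top ordering; the sector at $x$ into which $R$ dips is precisely the angular wedge that is clockwise of $e_p$ and counterclockwise of $e_q$, and this geometric configuration is exactly equivalent to $e_p > e_q$ in $\out(x)$. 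At $y$, the same analysis applies to the incoming rays $f_p,f_q$ of $C$, remembering that $\inp(y)$ is ordered clockwise; the condition that $R$ remains to the right of $p$ (equivalently to the left of $q$) at $y$ translates, via the same cyclic-order computation, into $f_p > f_q$ in $\inp(y)$.

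The main obstacle is bookkeeping the orientation conventions correctly rather than proving anything deep: the Jordan curve theorem and the classification of local configurations from \cref{lem prohibited local configuration} do all the geometric work, and one only has to verify once and for all that the side of $C$ on which $R$ lies is consistent at both endpoints. Given this, the lemma follows, with the "or" in the statement accounting for the two choices of orientation for $C$ (i.e.\ which of $p,q$ lies above).
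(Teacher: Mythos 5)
Your reduction to the endpoints is fine, and the Jordan--curve setup ($C=p\cdot q^{-1}$ is a simple closed curve, and the bounded region $R$ sits on one consistent side of it) is correct as far as it goes. But the step you describe as pure ``bookkeeping'' --- that $R$ occupying the wedge clockwise of $e_p$ and counterclockwise of $e_q$ ``is exactly equivalent to $e_p>e_q$ in $\out(x)$'' --- is not an unwinding of the definitions; it is where the entire content of the lemma lives, and your proposal never supplies it. The two statements live in different dichotomies: which of the two wedges at $x$ is locally the \emph{bounded} side of $C$ is a global fact about the curve, whereas $e_p>e_q$ in $\out(x)$ is determined by where the \emph{anchor} of the linear order sits relative to those wedges (the contiguous arc of incoming edges at $x$, or, when $x=s_\cP$, the reference edge in $\dom_\cP$ or $\cod_\cP$ from \cref{cvn:inp-out-order}). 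To identify the two you must show that no incoming edge of $x$ (resp.\ no outgoing edge of $y$, resp.\ the relevant exterior-face data) points into $R$; a priori the incoming fan at $x$ could poke into the bounded region, in which case $R$ being the wedge ``clockwise of $e_p$'' would force $e_p<e_q$ rather than $e_p>e_q$. Ruling this out requires the pasting-scheme axioms in an essential way --- acyclicity plus the existence of directed paths from $s_\cP$ and to $t_\cP$ and the fact that $s_\cP,t_\cP$ lie on the exterior face, hence outside $R$ --- and your argument invokes acyclicity only to dismiss the length-zero case. (Indeed, the statement is false for general anchizable-looking plane digraphs, so no proof can avoid this input.)

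The gap is fixable: one can show that any edge into $x$ whose germ lies in $R$ would yield either an endpoint on $p\cup q$ (giving a directed cycle through $x$) or a vertex strictly inside $R$, which a directed path from $s_\cP$ must reach by passing through a vertex $w$ of $p\cup q$, producing $x\peq w\peq u\prec x$, again a cycle; dually at $y$. With that established, your two cases (which side of $C$ is bounded) do give the two alternatives of the lemma. The paper short-circuits this by a different global argument: it extends $p$ to a path $r\cdot p\cdot r'$ from $s_\cP$ to $t_\cP$ whose region against $\cod_\cP$ traps the first edge of $q$, and observes that $q$ escaping that region through $r$ or $r'$ creates a directed cycle. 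Either way, the acyclicity-plus-source/sink argument is the proof, not an orientation check.
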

\begin{proof} Without loss of generality, suppose $\suc(p,x) > \suc(q,x)$. We will show that $\pred(p,y) > \pred(q,y)$. Since $p$ and $q$ have no other common vertices, this implies that $p$ lies above $q$. 

Choose any paths $r$ from $s_{\cP}$ to $x$ and $r'$ from $y$ to $t_{\cP}$ so that $r \cdot p \cdot r'$ and $\cod_{\cP}$ bound a closed subspace that contains the first outgoing edge of the path $q$. If $\pred(q,y) > \pred(p,y)$ then the path $q$ must leave this region by intersecting either the path $r$ or the path $r'$. If $q$ intersects $r$ then there is a directed cycle that starts at $x$, continues along $q$ to the vertex of intersection, and returns to $x$ along the path $r$. If $q$ intersects $r'$ then there is a directed cycle that starts at $y$, continues along $r'$ to the point of intersection, and returns to $y$ along the path $q$. This is prohibited by \cref{lem:acyclicity}.
\end{proof}

It follows that we can identify when a path $p$ lies above a path $q$ by considering only the incoming edges or only the outgoing edges of their common vertices.

\begin{lem}\label{lem only test half}
Suppose that $p$ and $q$ are paths from $x$ to $y$ in a pasting scheme.
If, for each vertex $v$ which appears on both $p$ and $q$ we have $\pred(p,v) \geq \pred(q,v)$, then $p$ lies above $q$.
Likewise, if for each vertex $v$ which appears on both $p$ and $q$ we have $\suc(p,v) \geq \suc(q,v)$, then $p$ lies above $q$.
\end{lem}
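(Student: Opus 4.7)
The plan is to reduce to \cref{lem short lies above} by cutting $p$ and $q$ into matched sub-paths along their common vertices. First I would observe that the common vertices of $p$ and $q$ appear in the same order along both paths: if $u$ precedes $w$ along $p$, then $u \peq w$ via the portion of $p$ between them, and a reversed order on $q$ would likewise yield $w \peq u$, contradicting the antisymmetry of $\peq$ that comes from acyclicity (\cref{lem:acyclicity}). Enumerate the common vertices in this order as $x = v_0, v_1, \ldots, v_n = y$, and decompose $p = p_0 p_1 \cdots p_{n-1}$ and $q = q_0 q_1 \cdots q_{n-1}$, where each $p_i$ and $q_i$ is a path from $v_i$ to $v_{i+1}$ sharing no other vertex with its partner.

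Next I would argue that each $p_i$ lies above $q_i$. If $p_i = q_i$ this is immediate. Otherwise $p_i$ and $q_i$ share no edge either, since any shared edge would contribute an interior vertex common to $p$ and $q$, contradicting that $v_i, v_{i+1}$ are consecutive common vertices. Hence \cref{lem short lies above} applies and one of $p_i, q_i$ lies above the other. The hypothesis $\pred(p, v_{i+1}) \geq \pred(q, v_{i+1})$ determines which: because the last edges of $p_i$ and $q_i$ into $v_{i+1}$ are distinct, the inequality is strict in $\inp(v_{i+1})_+$, which rules out $q_i$ lying above $p_i$.

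Finally I would verify the two conditions of \cref{defn:lies above} for $p$ and $q$. The pred-inequality at each $v_i$ with $i \geq 1$ is immediate from the hypothesis, and at $v_0 = x$ both sides are $\ast$. For the succ-inequalities, note that for $0 \leq i < n$ we have $\suc(p, v_i) = \suc(p_i, v_i)$ and similarly for $q$, while at $v_n = y$ both succ values are $\ast$; the fact that $p_i$ lies above $q_i$ supplies precisely $\suc(p_i, v_i) \geq \suc(q_i, v_i)$ at each such $v_i$. The dual statement, whose hypothesis is on $\suc$, is proved by the symmetric argument: one uses the strict inequality $\suc(p_i, v_i) > \suc(q_i, v_i)$ at the starting vertex of each non-trivial sub-path to select $p_i$ above $q_i$, then recovers the pred-inequalities at the endpoints.

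I do not anticipate a serious obstacle here; the only point that warrants care is checking that a shared edge in a sub-path really would force an interior common vertex, contradicting the enumeration $v_0, \ldots, v_n$. The rest is bookkeeping built on \cref{lem short lies above} together with the acyclicity of pasting schemes.
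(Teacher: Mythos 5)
Your proof is correct and takes essentially the same route as the paper's: both cut $p$ and $q$ at consecutive common vertices and apply \cref{lem short lies above} to each resulting pair of internally disjoint subpaths, using the hypothesized $\pred$-inequality at the terminal vertex of each segment to determine which subpath lies above the other. The only difference is that you spell out, via acyclicity, the justification that the common vertices occur in the same order along both paths (and that distinct subpaths share no edge), points the paper's terser proof leaves implicit.
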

\begin{proof} We prove the first statement. If $v \neq y$ is on both $p$ and $q$ we must show that $\suc(p,v) \geq \suc(q,v)$. 
If $\suc(p,v) = \suc(q,v)$, then we are done, so suppose this is not the case and let $w$ be the next point of intersection of the paths $p$ and $q$, after the vertex $v$. Then $p$ and $q$ restrict to define paths from $v$ to $w$ that do not share any edges or vertices other than $v$ and $w$. 
Since $\pred(p,w) \geq \pred(q,w)$, by \cref{lem short lies above} we know that the restriction of $p$ lies above the restriction of $q$, hence $\suc(p,v) \geq \suc(q,v)$.
\end{proof}

\begin{lem}\label{lem:paths_partition}
Let $p$ and $q$ be paths from $x$ to $y$ in a pasting scheme.
If $p$ lies above $q$, then $p$ and $q$ factor uniquely as concatenations of paths
\begin{align*}
p&= r_0 p_1 r_2 p_2 \cdots r_{n-1} p_n r_n \\
q&= r_0 q_1 r_2 q_2 \cdots r_{n-1} q_n r_n
\end{align*}
where
\begin{itemize}
\item each $p_i$ and each $q_i$ has positive length,
\item $p_i$ and $q_i$ intersect at their endpoints and do not intersect at any interior vertices,
\item the $r_i$ are paths, possibly of length zero (containing only a vertex), and 
\item $p_i$ lies above $q_i$.
\end{itemize}
\end{lem}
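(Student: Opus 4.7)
The plan is to identify the common vertices of $p$ and $q$, show they occur in the same order along both paths, and then read off the factorization by grouping consecutive ``coincidences'' together while separating ``divergences'' into the $(p_i, q_i)$ pairs.

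First, I would let $V \subseteq \{\text{vertices of } \cP\}$ be the set of vertices appearing on both $p$ and $q$; note $x, y \in V$. A key observation is that $V$ inherits a single total order from either path: if $v, w \in V$ with $v$ before $w$ on $p$, then $v \peq w$ by restricting $p$; if additionally $w$ were before $v$ on $q$, restricting $q$ would give $w \peq v$, and antisymmetry of $\peq$ (\cref{lem:acyclicity}) would force $v = w$. Write the ordered elements of $V$ as $x = v_0, v_1, \ldots, v_m = y$.

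Next I would analyze the portions of $p$ and $q$ between $v_j$ and $v_{j+1}$, call them $p^{(j)}$ and $q^{(j)}$. By maximality of the indexing, these subpaths share no vertices other than their endpoints. From this I claim the following dichotomy holds for each $j$: either (a) both $p^{(j)}$ and $q^{(j)}$ consist of the same single edge $v_j \to v_{j+1}$, or (b) $p^{(j)}$ and $q^{(j)}$ share no edges whatsoever. Indeed, any common edge would have its endpoints in $V$, but the only such endpoints available are $v_j$ and $v_{j+1}$; the dichotomy then follows because a single edge between consecutive common vertices is either traversed by both paths or not.

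With this dichotomy in hand, I would define the factorization by scanning $j = 0, 1, \ldots, m-1$ and collecting maximal consecutive runs of indices of type (a) into the paths $r_i$ (taking $r_i$ to be the single vertex $v_j$ when no (a)-run occurs at that position), while each type-(b) index contributes a $(p_i, q_i)$ pair. By construction $p_i, q_i$ have positive length, intersect only at their endpoints, and have no common interior vertices. Uniqueness of the factorization is immediate from these defining properties together with the requirement that $p_i, q_i$ have positive length.

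Finally I would verify that $p_i$ lies above $q_i$ using \cref{lem short lies above}: since $p_i$ and $q_i$ share no interior vertices, this is automatic once we know they are both paths from a common source to a common target, which they are. (Alternatively, at the shared endpoints, $\suc$ and $\pred$ for $p_i, q_i$ agree with those for $p, q$, so the inequalities transfer from the hypothesis that $p$ lies above $q$.) I do not expect a major obstacle; the only delicate point is the dichotomy in the second step, whose proof just needs the observation that a shared edge forces its endpoints into $V$, together with the maximality of the chosen partition.
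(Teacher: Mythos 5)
Your proposal is correct and is essentially the paper's argument in non-inductive form: the paper splits $p$ and $q$ at a single interior common vertex and inducts on the number of common vertices, whereas you order all common vertices at once (using acyclicity, exactly as you say) and read off the factorization segment by segment; the underlying decomposition is identical. One small caution: your first justification that $p_i$ lies above $q_i$ via \cref{lem short lies above} is insufficient on its own, since that lemma only yields that \emph{one} of $p_i$, $q_i$ lies above the other, not which one; it is your parenthetical argument---that $\suc$ and $\pred$ at the shared endpoints of $p_i$ and $q_i$ agree with those of $p$ and $q$, so the inequalities from ``$p$ lies above $q$'' transfer and the remaining comparisons at the endpoints are between null points---that actually pins down the direction, and this is the argument you should keep.
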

\begin{proof}
Induct on the number $k$ of vertices held in common between $p$ and $q$.
The base case is when $k=2$, that is where $p$ and $q$ only intersect at the beginnings and ends, in which case we can take $p_1 =p$, $q_1 = q$, and $r_0$, $r_1$ to be the extremal vertices.
If $k > 2$, choose a vertex $v$ away from the ends that lies on both paths, and write $p = p_-p_+$ and $q=q_-q_+$ as concatenations of positive length paths, breaking at $v$.
Then $p_-$ lies above $q_-$ and $p_+$ lies above $q_+$.
We have strictly fewer than $k$ vertices in common between $p_-$ and $q_-$, and likewise for $p_+$ and $q_+$, so the induction hypothesis gives the relevant splittings which we then combine.
\end{proof}

For later use, we observe that the relation of a path lying above another path is transitive.

\begin{lem} Let $p$, $q$, and $r$ be paths of edges from $x$ to $y$ in a pasting scheme. If $p$ lies above $q$ and $q$ lies above $r$ then $p$ lies above $r$.
\end{lem}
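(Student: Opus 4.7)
The plan is to apply \cref{lem only test half}, reducing to showing $\pred(p,v) \geq \pred(r,v)$ for every common vertex $v$ of $p$ and $r$. The argument then splits according to whether $v$ also lies on $q$.

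If $v \in q$, the chain $\pred(p,v) \geq \pred(q,v) \geq \pred(r,v)$ is immediate from the two hypotheses that $p$ lies above $q$ and that $q$ lies above $r$.

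The main obstacle is handling the case $v \in p \cap r$ with $v \notin q$, which I would show cannot occur. The intuition is planar: since $p$ lies above $q$, any vertex of $p$ not on $q$ should sit on the geometrically above side of $q$ in the plane, while any vertex of $r$ not on $q$ should sit on the geometrically below side; a single vertex $v$ cannot lie on both sides. To formalize this, I would apply \cref{lem:paths_partition} twice: once to $(p,q)$ to obtain decompositions $p = u_0 p_1 u_1 \cdots p_n u_n$ and $q = u_0 q_1 u_1 \cdots q_n u_n$ with each pair $p_i, q_i$ sharing only endpoints, and once to $(q,r)$ to obtain analogous decompositions $q = w_0 q'_1 w_1 \cdots q'_m w_m$ and $r = w_0 r'_1 w_1 \cdots r'_m w_m$. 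Since $v \in p \setminus q$, the vertex $v$ lies in the interior of some $p_i$, so the Jordan curve $p_i \cup q_i$ bounds a topological disc $D$ on the side of $q_i$ where $p_i$ sits; similarly $v$ lies in the interior of some $r'_j$, and $q'_j \cup r'_j$ bounds a disc $D'$ on the opposite side of $q'_j$. Invoking the orientation conventions for anchorable faces (\cref{defn:interior-anchorable}), $\overline{D}$ and $\overline{D'}$ lie on opposite sides of $q$ and may meet only along $q$ itself, so $v \in \overline{D} \cap \overline{D'}$ forces $v \in q$, a contradiction.

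Once the impossibility of $v \in (p \cap r) \setminus q$ is established, the easy case applies at every common vertex of $p$ and $r$, and \cref{lem only test half} concludes that $p$ lies above $r$. The hard part will be the planarity step: carefully relating the two (potentially different) decompositions of $q$ from the two applications of \cref{lem:paths_partition}, and verifying that the discs $D$ and $D'$ are genuinely separated by the relevant portion of $q$ so that they can only meet along $q$.
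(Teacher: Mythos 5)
Your reduction via \cref{lem only test half} and your handling of vertices $v \in p \cap q \cap r$ are fine, and you have correctly identified the crux: one must rule out a vertex $v \in p \cap r$ with $v \notin q$. But the planarity argument you propose for this step has a genuine gap, which you yourself flag as ``the hard part.'' The claim that $\overline{D}$ and $\overline{D'}$ meet only along $q$ does not follow from the orientation conventions of \cref{defn:interior-anchorable}, and as stated it is essentially circular: since $p_i \subseteq \partial D$ and $r'_j \subseteq \partial D'$, the assertion $\overline{D} \cap \overline{D'} \subseteq q$ already contains the statement $p_i \cap r'_j \subseteq q$, which is precisely the instance of the lemma you are trying to prove (the offending vertex $v$ is an interior vertex of both $p_i$ and $r'_j$). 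The ``opposite sides of $q$'' intuition is only a local statement along edges common to $q_i$ and $q'_j$, and these two subpaths of $q$ need not share any edges at all; moreover an arc does not separate the plane, so local sidedness along $q$ does not by itself prevent the two discs from meeting away from $q$. Any honest proof of the separation appears to require an argument of the same nature as the lemma itself.

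The paper closes exactly this gap by a different device. It considers the \emph{first} vertex $v \neq x$ of $p \cap r$ in the order along the paths, so that the initial segments of $p$ and $r$ up to $v$ meet only at their endpoints and bound a Jordan region. Since $\suc(p,x) \geq \suc(q,x) \geq \suc(r,x)$, the path $q$ begins inside this region, and since $y$ lies outside it, $q$ must exit through some vertex $u$ on one of the two bounding segments. If $u$ were strictly between $x$ and $v$ on the $p$-segment, exiting there would force $\suc(q,u) > \suc(p,u)$, contradicting that $p$ lies above $q$; dually for the $r$-segment. Hence $u = v$, so $v \in q$, and the argument iterates along the paths. If you wish to salvage your disc picture, this ``exit vertex'' analysis is the missing ingredient that actually produces the separation you are asserting.
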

\begin{proof}
To prove that $\suc(p,v) \geq \suc(r,v)$ and $\pred(p,v) \geq \pred(r,v)$ for every vertex $v$ that belongs to both $p$ and $r$ we argue that all such vertices also lie on $q$. The claimed result then follows from the transitivity of the relation $\geq$ on $\out(v)$ and $\inp(v)$.

It suffices to consider the first vertex $v$ that is not equal to $x$ or $y$ that lies along the intersection of $p$ and $r$ in the order imposed by these directed paths. The initial segments of $p$ and $r$ then define directed paths from $x$ to $v$ that only intersect at their endpoints. Since the vertex $y$ lies outside this bounded region and $q$ is a path from $x$ to $y$ whose initial edge lies below the path $p$ and above the path $r$, the path $q$ must exit this region at some vertex $u$ that lies either on $p$ or $r$. We claim that $u=v$.

If $u$ lies on $p$ strictly between $x$ and $v$ and $u$ is the exit vertex for $q$, then $\suc(q,u) > \suc(p,u)$, contradicting the fact that $p$ lies above $q$. A similar argument shows that $u$ cannot lie along $r$ strictly between $x$ and $v$. Hence $u=v$ is a point on the path $q$.
\end{proof}

\begin{defn}\label{def:p-over-q}
Suppose that $p$ lies above $q$ in a pasting scheme $\cP$.
Using the notation from \cref{lem:paths_partition}, let $C_i \subset \CC\backslash (p_i \cup q_i)$ be the bounded region.
Write $p/q$ for the subgraph of $\cP$ containing all vertices and edges in the paths $p$ and $q$, as well as all edges and vertices appearing in $\bigcup_{i=1}^n \overline{C_i}$.
\end{defn}

\begin{prop}\label{prop:parallel-arrow-pasting-scheme}
If $p$ and $q$ are non-trivial paths from $x$ to $y$ in a pasting scheme  $\cP$ and $p$ lies above $q$, then $p/q$ is a pasting scheme in which:
\begin{itemize}
\item A vertex $v$ of $\cP$ is in $p/q$ if and only if there is some path $m$ of $\cP$ from $x$ to $y$ which lies below $p$, lies above $q$, and contains $v$. 
\item An edge $e$ of $\cP$ is in $p/q$ if and only if there is some path $m$ of $\cP$ from $x$ to $y$ which lies below $p$, lies above $q$, and contains $e$.  In particular, a path $m$ from $x$ to $y$ in $\cP$ lies in $p/q$ if and only if it lies below $p$ and lies above $q$.
\item The atomic $2$-cells of $p/q$ are those atomic $2$-cells of $\cP$ which lie in one of the discs $C_i$. Hence an atomic $2$-cell $\alpha$ lies in $p/q$ if and only the paths $\dom_\alpha$ and $\cod_\alpha$ lie in $p/q$.
\end{itemize}
Moreover, $s_{p/q} = x$, $t_{p/q} = y$, $\dom_{p/q} = p$ and $\cod_{p/q} = q$.
\end{prop}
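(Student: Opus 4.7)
The plan is to reduce to the case where $p$ and $q$ meet only at their endpoints and then verify each clause via a topological analysis of the resulting Jordan region.

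For the reduction, apply \cref{lem:paths_partition} to factor $p = r_0 p_1 r_1 \cdots p_n r_n$ and $q = r_0 q_1 r_1 \cdots q_n r_n$, where each pair $(p_i,q_i)$ meets only at its endpoints and $p_i$ lies above $q_i$. By \cref{def:p-over-q}, $p/q$ is the union of the subgraphs $p_i/q_i$ glued along the $r_i$, whose edges become cut edges of $p/q$. Granting the proposition for each $p_i/q_i$, the conclusion for $p/q$ follows formally: the characterizations of vertices, edges, and atomic $2$-cells glue in the evident way; the exterior face is anchored with source path $p$ and target path $q$; and the only candidate local sources or sinks arising from the gluing are the endpoints of the $r_i$, namely $x$ and $y$.

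In the reduced case, $p\cup q$ is a Jordan curve bounding a closed disc $\overline{C}$, and $p/q$ consists of all vertices, edges, and interior faces of $\cP$ lying in $\overline{C}$. For exterior anchorability, the boundary of the unbounded face of $p/q$ is precisely $p\cup q$; since $p$ lies above $q$, the standard planar orientation places $\overline{C}$ to the right of $p$ and to the left of $q$ (reading each from $x$ to $y$), whence $\dom_{p/q}=p$ and $\cod_{p/q}=q$, so also $s_{p/q}=x$ and $t_{p/q}=y$. Interior anchorability is inherited, since each interior face of $p/q$ coincides with an interior face of $\cP$. For the source/sink condition, any edge of $\cP$ incident to an interior vertex $v$ of $\overline{C}$ must itself lie in $\overline{C}$, since by planarity its opposite endpoint cannot lie outside $\overline{C}$ without the edge crossing $p\cup q$ at a non-vertex point; using that $s_\cP$ and $t_\cP$ lie on the exterior boundary of $\cP$, every such $v$ inherits incoming and outgoing edges from $\cP$ to $p/q$, while vertices on $p$ or $q$ distinct from $x,y$ obtain them directly from $p$ or $q$. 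That no vertex other than $x$ or $y$ becomes a local source or sink---in particular that $s_\cP,t_\cP$ cannot sneak into $p/q$ unless they equal $x$ or $y$---follows from \cref{lem:acyclicity}, since any edge $w\to x$ (resp.\ $y\to w$) inside $\overline{C}$ combines with a path in $\overline{C}$ to produce a directed cycle.

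For the three characterizations, the substantive direction produces, from a vertex/edge/atomic $2$-cell of $p/q$, a witnessing path $m$ from $x$ to $y$ with $p$ above $m$ and $m$ above $q$. Given $v\in p/q$ not on the boundary, apply \cref{lem:global-min-max} to choose a directed path in $\cP$ from $s_\cP$ to $v$, take its final segment starting at the last vertex $x'$ lying on $p\cup q$, and prepend an initial segment of $p$ or $q$ from $x$ to $x'$; here $x'\neq y$ since $y$ has no outgoing edges in $\cP$. A symmetric construction supplies a directed path from $v$ to $y$ inside $\overline{C}$, and concatenation yields the desired $m$. The main obstacle---and the step requiring genuine care---is verifying that any such $m\subseteq\overline{C}$ satisfies that $p$ lies above $m$ and $m$ lies above $q$: at any vertex $u$ common to $m$ and $p$, the edge $\suc(m,u)$ either equals $\suc(p,u)$ or points into $\overline{C}$, and the local cyclic structure at $u$ controlled by \cref{lem prohibited local configuration} together with the ordering of \cref{cvn:inp-out-order} forces $\suc(m,u)\leq\suc(p,u)$, whence \cref{lem only test half} yields $p$ above $m$; the symmetric argument gives $m$ above $q$. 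The edge characterization is entirely analogous, and for atomic $2$-cells one uses that any face of $\cP$ whose bounding edges lie in $p/q\subseteq\overline{C}$ is itself trapped inside $\overline{C}$ (its interior is a component of $\CC\setminus\cP$ bounded by a Jordan curve inside $\overline{C}$, hence is contained in some $C_i$ by the Jordan curve theorem).
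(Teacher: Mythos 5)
Your argument reaches the right conclusion but by a genuinely different route from the paper's. The paper works with the general $p/q$ directly: interior anchorability is inherited, the exterior face is anchored by identifying $p$ and $q$ with the edges incident to it on their left and right respectively, and \cref{lem prohibited local configuration} (rather than acyclicity) is what shows no edge of $\inp(x)$ or $\out(y)$ survives into $p/q$; uniqueness of the source and sink is then dispatched by noting that every other vertex of $p\cup q$ has an input edge on $p$ or $q$ while vertices in the open regions $C_i$ retain all of their $\cP$-edges. Notably the paper's proof does not spell out the three bullet-point characterizations at all---it only invokes them to get connectedness---so your explicit construction of witnessing paths via \cref{lem:global-min-max} together with the cyclic-order analysis at boundary vertices, and your up-front reduction to the single-lens case via \cref{lem:paths_partition} in place of carrying the $C_i$ decomposition of \cref{def:p-over-q} through the whole argument, are legitimate alternatives that make the topology more explicit.

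Two local points need repair. First, your justification that $x'\neq y$ (``since $y$ has no outgoing edges in $\cP$'') is false: $y$ is an arbitrary vertex, not $t_\cP$, and may well have outgoing edges in $\cP$. The claim itself is fine, but for a different reason: if $x'=y$, then concatenating the segment from $y$ into $\overline{C}$ reaching $v$ with your path from $v$ back out to some $z'$ on $p\cup q$ and thence along $p$ or $q$ to $y$ produces a directed cycle, contradicting \cref{lem:acyclicity}---the same device you already deploy for the source/sink uniqueness. Second, you argue only one direction of the characterizations: you produce, for each vertex/edge/face of $p/q$, a witness $m$ with $p$ above $m$ above $q$, but the ``if'' direction requires the converse containment, namely that any path $m$ from $x$ to $y$ lying between $p$ and $q$ is contained in $\overline{C}$. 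This follows from the same local analysis (the first edge of $m$ at $x$ points into $\overline{C}$ or along its boundary, and $m$ could only escape through a vertex $u$ of $p\cup q$ at which $\suc(m,u)>\suc(p,u)$ or $\suc(m,u)<\suc(q,u)$, contradicting betweenness), so it deserves at least a sentence.
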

\begin{proof}
We first show that $p/q$ is a pasting scheme.
Interior faces of $p/q$ are also interior faces of $\cP$, hence anchorable.
To see the exterior face of $p/q$ is anchorable, first note that if an edge $e$ of $p/q$ is incident to the exterior face, then $e$ must be in $p$ or be in $q$. 
The edges of $p$ are exactly those edges which are incident to the exterior face on their left, and the edges of $q$ are exactly those which are incident to the exterior face on their right.
It follows that the exterior face is anchorable, and $\dom_{p/q} = p$, $\cod_{p/q} = q$. The graph $p/q$ is finite and connected (by the characterization of vertices and edges), so the conditions of \cref{defn:pasting-scheme} are satisfied.

By \cref{lem prohibited local configuration}, no edge in $\inp(x)$ and no edge in $\out(y)$ lies in $p/q$. Hence, $x$ is a source in $p/q$ and $y$ is a sink in $p/q$.
The vertex $x$ is the unique source of the graph $p/q$:
certainly every other vertex in $p$ and every other vertex in $q$ has an input edge, while the vertices in the open region $C_i$ have exactly the same set of input edges as they had in $\cP$. A similar argument shows that $y$ is the unique sink of $p/q$.
\end{proof}

Another useful notion is the sub pasting scheme between two vertices, which arises as a special case of the preceding proposition. To prove this, we use the following:

\begin{lem}\label{lem:maximal-path-x-to-y}
If $x\prec y$ are vertices of a pasting scheme $\cP$, then there is a unique path $p$ from $x$ to $y$ which lies above all other paths from $x$ to $y$. Dually, there is a unique path $q$ which lies below all other paths from $x$ to $y$.
\end{lem}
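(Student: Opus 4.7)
The plan is to construct a unique maximum path in the poset $P_{x,y}$ of directed paths from $x$ to $y$, ordered by ``lies above.'' Non-emptiness of $P_{x,y}$ is immediate from $x \prec y$, and finiteness follows from \cref{lem:acyclicity}, since every directed path in $\cP$ must be simple.

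For two paths $p, q \in P_{x,y}$, I will construct a join $p \vee q \in P_{x,y}$ as follows. By acyclicity, the common vertices of $p$ and $q$ are linearly ordered $x = v_0 \prec v_1 \prec \cdots \prec v_n = y$ and appear in this order along both paths (otherwise one extracts a directed cycle). Factoring $p = p_1 p_2 \cdots p_n$ and $q = q_1 q_2 \cdots q_n$ into subpaths between consecutive common vertices, each pair $p_i, q_i$ consists of paths from $v_{i-1}$ to $v_i$ sharing only those endpoints, so \cref{lem short lies above} applies and they are comparable. Let $r_i$ denote the higher of $p_i$ and $q_i$ and set $p \vee q := r_1 r_2 \cdots r_n$.

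To see $p \vee q$ lies above $p$ (and, symmetrically, above $q$), by \cref{lem only test half} it is enough to check $\suc(p \vee q, w) \geq \suc(p, w)$ at every vertex $w$ common to $p \vee q$ and $p$. If $w = v_i$, this is immediate from $r_{i+1}$ lying above $p_{i+1}$. If $w$ is interior to some block where $r_i = p_i$, the two paths agree locally at $w$. The remaining case, $w$ interior to a block where $r_i = q_i \neq p_i$, cannot occur: then $w$ would lie on both $p$ and $q$ strictly between two consecutive $v_{i-1}, v_i$, contradicting the maximality of the list of common vertices.

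Since $P_{x,y}$ is a finite set admitting pairwise joins, it has a maximum element, and this element is unique by antisymmetry of ``lies above.'' Indeed, if $p$ and $p'$ each lie above the other, applying \cref{lem:paths_partition} (whose decomposition is determined symmetrically by the common-vertex pattern of the two paths) produces block pairs $p_i, q_i$ with $p_i$ above $q_i$ and also $q_i$ above $p_i$; the mutual comparisons force the first outgoing edges at the common source to coincide, which (absent shared interior vertices) forces each block to be a single shared edge, so $p = p'$. The dual statement about a unique minimum path follows by the symmetric argument, choosing the lower subpath in each block. The main technical subtlety is verifying that $p \vee q$ lies above $p$ at every common vertex, where \cref{lem only test half} is crucial in reducing to a one-sided check and ruling out the anomalous third case above.
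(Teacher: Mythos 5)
Your proof is correct, but it takes a genuinely different route from the paper's. The paper constructs the maximal path directly by a greedy algorithm: it colours an edge blue when it lies on some path from $x$ to $y$, observes that every vertex other than $y$ with a blue input also has a blue output, and then repeatedly selects the greatest blue edge in $\out(v)$; finiteness and \cref{lem:acyclicity} force this walk to terminate at $y$, and \cref{lem only test half} gives maximality immediately, since the greedy successor dominates the successor of any competing path at every common vertex. You instead establish that the set of paths from $x$ to $y$, ordered by ``lies above,'' is a finite nonempty poset admitting binary upper bounds---built by splicing together the higher of the two subpaths between each pair of consecutive common vertices---and extract the maximum from that. Both arguments are sound. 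The paper's approach buys an explicit, edge-by-edge description of the extremal path (which is convenient when such paths are chosen later in the paper); yours buys a little extra structure, namely binary joins in the path poset, anticipating the lattice statement of \cref{lem:hom-lattice}. Two small points you should make explicit rather than leave implicit: first, chaining your pairwise joins into a global maximum requires transitivity of ``lies above'' (the unnumbered lemma following \cref{lem:paths_partition}), so cite it; second, in your block decomposition the degenerate case where $p_i = q_i$ is a single shared edge does occur, and there the hypothesis of \cref{lem short lies above} (no shared edges) fails---comparability is trivial in that case, but say so. Neither is a gap, just a citation and a sentence to add.
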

\begin{proof}
We prove only the first statement.
Color an edge $e$ of $\cP$ blue if it lies on some path from $x$ to $y$. If $v\neq y$ and there is a blue edge in $\inp(v)$, then there is also a blue edge in $\out(v)$; one can see this because the blue edge in $\inp(v)$ is witnessed by some path from $x$ to $y$, which must continue out of $v$.
    
By assumption, $x \prec y$, hence there is a blue edge in $\out(x)$.
We form the path $p = e_1 \cdots e_n$
where $e_1$ is the greatest blue edge in $\out(x)$ and $e_{i+1}$ is the greatest blue edge in $\out(v_{i})$, where $v_i$ is the target of $e_i$. This terminates at some edge $e_n$ with target $y=v_n$ because $\cP$ is finite and has no cycles. 

To show that $p$ lies above any other path $r$ from $x$ to $y$, we appeal to \cref{lem only test half}. Because $r$ is a path from $x$ to $y$, all of its edges are blue. Thus, for any $v$ which appears on both $p$ and $r$, we have that $\suc(p,v) \geq \suc(r,v)$ by construction. Thus, \cref{lem only test half} tells us that $p$ lies above $r$.
\end{proof}

We now show that for any pair of vertices $x$ and $y$ in a pasting scheme, the subgraph spanned by those vertices $v$ so that $x \peq v \peq y$ defines a pasting scheme that is full in various senses that we now establish:

\begin{cor}\label{cor:sub-pasting-scheme}
Suppose that $\cP$ is a pasting scheme, $x \prec y$ are vertices.
Let $\cP_{x,y}$ be the induced plane subgraph containing all vertices $u$ so that $x \peq u \peq y$ and all edges these vertices span.
Then $\cP_{x,y}$ is a pasting scheme, with  source vertex $x$ and  target vertex $y$, which has
\begin{itemize}
\item as objects all vertices $u \in P$ so that $x \preccurlyeq u \preccurlyeq y$,
\item as atomic $1$-cells all directed edges between these vertices, 
\item as atomic $2$-cells all atomic $2$-cells $\alpha$ from $\cP$ so that $x \peq s_\alpha$ and $t_\alpha \peq y$, or equivalently all faces $\alpha$ whose boundary paths $\dom_\alpha$ and $\cod_\alpha$ lie in $\cP_{x,y}$.
\end{itemize}
Moreover, the pasting scheme $\cP_{x,y} \subset \cP$ 
contains all objects, atomic $1$-cells, and atomic $2$-cells of $\cP$ that are contained within the closure of the exterior boundary of $\cP_{x,y}$.
\end{cor}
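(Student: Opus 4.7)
\medskip

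The plan is to deduce this corollary from \cref{prop:parallel-arrow-pasting-scheme} by choosing $p$ and $q$ to be the extremal paths from $x$ to $y$. By \cref{lem:maximal-path-x-to-y}, since $x \prec y$ there is a (non-trivial) path $p$ from $x$ to $y$ lying above every other path from $x$ to $y$, and a path $q$ lying below every other such path. In particular, every path from $x$ to $y$ automatically lies below $p$ and above $q$, so $p$ lies above $q$ and the construction $p/q$ from \cref{def:p-over-q} is available.

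Next I would identify $\cP_{x,y}$ with $p/q$ by comparing the explicit descriptions of vertices, edges, and 2-cells given by \cref{prop:parallel-arrow-pasting-scheme}. For vertices: since any path from $x$ to $y$ already lies between $p$ and $q$, the condition that $v$ belong to some such path reduces to the existence of a path from $x$ to $y$ through $v$. Such a path exists exactly when $x \peq v \peq y$: the reverse implication is immediate, while the forward implication follows by concatenating witnessing paths from $x$ to $v$ and from $v$ to $y$. An analogous argument handles edges: an edge $e$ from $u$ to $u'$ appears on some path from $x$ to $y$ iff $x \peq u$ and $u' \peq y$, which is precisely the condition that both endpoints of $e$ lie in $\cP_{x,y}$. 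For the atomic 2-cells, \cref{prop:parallel-arrow-pasting-scheme} characterizes those in $p/q$ as the 2-cells $\alpha$ whose boundary paths $\dom_\alpha$ and $\cod_\alpha$ lie in $p/q$; this holds iff all vertices of those paths lie in $p/q$, and since every intermediate vertex $v$ on $\dom_\alpha$ or $\cod_\alpha$ satisfies $s_\alpha \peq v \peq t_\alpha$, this reduces to the condition $x \peq s_\alpha$ and $t_\alpha \peq y$ (using that $\dom_\alpha, \cod_\alpha$ can then be extended to full paths from $x$ to $y$ by prepending a path $x \to s_\alpha$ and appending a path $t_\alpha \to y$). This simultaneously gives the equivalence with the condition that $\dom_\alpha$ and $\cod_\alpha$ lie in $\cP_{x,y}$.

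The identification $\cP_{x,y} = p/q$ then transports the pasting-scheme structure and the equalities $s_{p/q}=x$, $t_{p/q}=y$, $\dom_{p/q}=p$, $\cod_{p/q}=q$ from \cref{prop:parallel-arrow-pasting-scheme} directly to $\cP_{x,y}$. For the final ``moreover'' statement, note that the exterior boundary of $\cP_{x,y}$ is exactly $p \cup q$, whose closure is the union of $p$, $q$, and the bounded discs $C_i$ of \cref{def:p-over-q}. Any vertex, edge, or interior face of $\cP$ lying inside this closed region lies on $p$, on $q$, or in some $\overline{C_i}$, and hence belongs to $p/q = \cP_{x,y}$ by the explicit description from \cref{prop:parallel-arrow-pasting-scheme}. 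The only mildly delicate step is verifying the 2-cell characterization in a way that simultaneously yields the ``or equivalently'' clause, but this is purely bookkeeping once the vertex and edge descriptions are in hand.
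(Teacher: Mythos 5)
Your proof is correct and follows essentially the same route as the paper: both identify $\cP_{x,y}$ with $p/q$ for the extremal paths $p$ and $q$ supplied by \cref{lem:maximal-path-x-to-y} and then read off the structure from \cref{prop:parallel-arrow-pasting-scheme}. You simply spell out in more detail the verification that the two descriptions of the vertices, edges, and atomic $2$-cells agree, which the paper leaves implicit.
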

\begin{proof}
Let $p$ and $q$ be the top-most and bottom-most paths from $x$ to $y$, as in \cref{lem:maximal-path-x-to-y}.
Then using the description from \cref{prop:parallel-arrow-pasting-scheme}, the graph $\cP_{x,y}$ is $p/q$.
\end{proof}

\cref{prop:parallel-arrow-pasting-scheme} and  \cref{cor:sub-pasting-scheme} give two important notions of ``sub pasting scheme'' full on those vertices that lie between a specified global source vertex $x$ and global target vertex $y$. We also make use of another notion of ``sub pasting scheme'' that removes one of the atomic 2-cells. It is not possible to remove an arbitrary atomic 2-cell and retain a pasting scheme, much like it is not possible to remove an arbitrary collection of vertices. But we are able to prune a single atomic 2-cell if its entire source path is a subpath of the exterior source path or if its entire target path is a subpath of the exterior target path. And moreover, any pasting scheme that has at least one atomic 2-cell contains some atomic 2-cells  with each of these properties. This will form the basis of our inductive arguments.

To prove this, we require a few preliminary results. The first of these allows us to analyze the adjacent edges in the source path $\dom_\cP$ for a pasting scheme $\cP$, extending \cref{lem:partitioning}.

\begin{lem}\label{lem start or end}
Let $\cP$ be a pasting scheme and let $v$ be an interior vertex of its source path $\dom_\cP$.
Suppose that the incoming edge $e \colon u \to v$ is in both $\dom_\cP$ and in $\dom_\alpha$, and the outgoing edge $e' \colon v \to w$ in $\dom_\cP$ and in $\dom_\beta$.
If $\alpha$ and $\beta$ are distinct atomic $2$-cells, then either $v = t_\alpha$ or $v = s_\beta$.
\end{lem}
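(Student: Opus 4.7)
The plan is to perform a local analysis at the vertex $v$, using \cref{lem prohibited local configuration} to constrain the cyclic order of edges incident to $v$. A key preliminary is to translate ``left/right of an edge at $v$'' into clockwise/counterclockwise position at $v$: for an incoming edge the face on its left sits immediately clockwise (and the face on its right immediately counterclockwise), while for an outgoing edge these roles are swapped. Combined with \cref{lem prohibited local configuration}, this shows that the incoming edges at $v$ form a contiguous arc and the outgoing edges form another, with two interior positions at $v$ separating the arcs and occupied by faces. Since $e, e' \in \dom_\cP$ both have the exterior face on their left, the exterior occupies one of these two separating positions; it sits immediately clockwise of $e$ and immediately counterclockwise of $e'$. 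Consequently $\alpha$ occupies the position immediately counterclockwise of $e$ and $\beta$ occupies the position immediately clockwise of $e'$.

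If $v$ has only $e$ and $e'$ as incident edges, then the non-exterior separator is a single face, forcing $\alpha = \beta$ and contradicting the hypothesis. Otherwise, let $f$ be the edge immediately counterclockwise of $e$, so that $f$ bounds $\alpha$ on the side opposite $e$. If $f$ is incoming, then $\alpha$ sits clockwise of $f$, i.e., $\alpha$ is on the left of $f$, so $f \in \cod_\alpha$. Since $f$ ends at $v$, either $v = t_\alpha$ or $\cod_\alpha$ continues through $v$ via an outgoing edge whose left-adjacent face is $\alpha$. Such an outgoing edge would have to sit immediately clockwise of $\alpha$ in the cyclic order, but that position is occupied by $e$---which is incoming---so no such continuation exists and $v = t_\alpha$.

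If instead $f$ is outgoing, the contiguity of the incoming block forces $e$ to be the only incoming edge at $v$. A symmetric analysis of $\beta$ then applies: the edge immediately clockwise of $\beta$ (which bounds $\beta$ opposite $e'$) is also outgoing, so no incoming edge at $v$ has $\beta$ on its right, and hence no predecessor of $e'$ in $\dom_\beta$ can exist at $v$; this gives $v = s_\beta$. The main delicate point is maintaining the correspondence between left/right of an edge and clockwise/counterclockwise position at $v$, since it swaps depending on whether the edge is incoming or outgoing; once this bookkeeping is set up the case analysis itself is short.
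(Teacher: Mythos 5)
Your argument is correct, but it takes a genuinely different route from the paper's. The paper argues by contraposition in three lines: if $v \neq t_\alpha$ and $v \neq s_\beta$, then $\dom_\alpha$ has a continuation $f\colon v\to p$ and $\dom_\beta$ has a predecessor $f'\colon q\to v$; since $\alpha$ is the single sector pinched between $e$ and $f$ and $\beta$ is the single sector pinched between $e'$ and $f'$, the clockwise cyclic order at $v$ contains $e'$ (out), $f'$ (in), $f$ (out), $e$ (in), which is exactly the configuration forbidden by \cref{lem prohibited local configuration}. You instead run a direct case analysis on the edge $f$ immediately counterclockwise of $e$, using the contiguity of the incoming and outgoing blocks; both branches close correctly ($f$ incoming forces $f\in\cod_\alpha$ with no possible continuation, hence $v=t_\alpha$; $f$ outgoing forces $e$ to be the unique incoming edge, hence $v=s_\beta$), and your left/right versus clockwise/counterclockwise dictionary is exactly right and is also what the paper's proof implicitly uses. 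The one place where your argument leans on more than the paper's does is the claim that the exterior sector immediately clockwise of $e$ coincides with the sector immediately counterclockwise of $e'$ (equivalently, that $e'$ is the very next edge clockwise after $e$): when $\cod_\cP$ also passes through $v$, the exterior face meets $v$ in two sectors, so ``both have the exterior on their left'' does not by itself single out one sector; one needs the correspondence between the face-sectors at $v$ and the visits of the boundary walk $\dom_\cP\cdot\cod_\cP^{-1}$ to $v$, which is precisely the content of \cref{rmk:cyclic-ordering-exterior} (stated in the paper only after this lemma). This fact is true and easily supplied, but the paper's contrapositive argument never needs it, which is what makes it shorter and more self-contained at this point in the development.
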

\begin{proof}
If $v$ is neither the target of $\alpha$ nor the source of $\beta$ we have two further edges
$f \colon v \to p$ in $\dom_\alpha$ and $f' \colon q \to v$ in $\dom_{\beta}$. Because $e$ and $f$ lie on the domain path of $\alpha$ and $e'$ and $f'$ lie on the domain path of $\beta$, in the clockwise cyclic ordering of edges adjacent to $v$ we have $e'$ then $f'$ then $f$ then $e$---possibly with other edges in between. This is prohibited by \cref{lem prohibited local configuration}.
\end{proof}

\begin{rmk}\label{rmk:cyclic-ordering-exterior}
Let $v$ be a vertex in pasting scheme so that $v$ is on the boundary of the exterior face. Then since the exterior face is anchorable, $v$ is incident to one, two, three, or four edges on the boundary of the exterior face.
\[
\begin{tikzcd}[row sep=tiny]
& & \bullet \arrow[dr]
\\ 
\bullet \arrow[r, bend left] \arrow[r, phantom, "\Downarrow"] \arrow[r, bend right] & \bullet \arrow[rr, bend right]  \arrow[ur] & \Downarrow & \bullet \arrow[r] & \bullet
\end{tikzcd}
\]
If $s_\cP \neq v \neq t_\cP$ the source and target paths pass through $v$ at most once, so $v$ cannot be incident to five or more boundary edges.

If two of the edges are along $\dom_\cP$, then the one coming into $v$ immediately precedes the one coming out of $v$ in the clockwise cyclic ordering, and if two of the edges are along $\cod_\cP$ then the one coming out of $v$ immediately precedes the one coming into $v$ in the clockwise cyclic ordering.
\end{rmk}

Armed with these observations, we can prove the result we want, recovering Power's \cite[Proposition 2.10]{PowerPasting}:

\begin{prop}\label{prop:top-face}
If $\cP$ is a pasting scheme with at least one interior face, then there exists an atomic $2$-cell $\alpha$ so that $\dom_\alpha$ is contained in $\dom_\cP$.
\end{prop}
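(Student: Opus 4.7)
Plan. I would argue by contradiction: suppose no atomic 2-cell $\alpha$ satisfies $\dom_\alpha\subseteq\dom_\cP$. Before the main argument, I would verify that $\dom_\cP$ contains at least one non-cut edge, since otherwise the setup below would be vacuous. Indeed, if every edge of $\dom_\cP$ were a cut edge, then by \cref{lem:partitioning} each would also lie in $\cod_\cP$; this forces $\dom_\cP=\cod_\cP$ as edge sets and the underlying graph to be a tree, contradicting the hypothesis that $\cP$ has an interior face.

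Under the contradiction hypothesis, for each atomic 2-cell $\alpha$ I would choose an edge $f_\alpha\in\dom_\alpha\setminus\dom_\cP$; since $f_\alpha\notin\dom_\cP$, its left face is not the exterior, so it is some atomic 2-cell $\phi(\alpha)$ satisfying $f_\alpha\in\dom_\alpha\cap\cod_{\phi(\alpha)}$. Intuitively, $\phi(\alpha)$ sits directly above $\alpha$ across $f_\alpha$ in the planar embedding. Because there are only finitely many atomic 2-cells, iterating $\phi$ must produce a cycle $\alpha_0,\alpha_1,\ldots,\alpha_{n-1},\alpha_0$ with $\phi(\alpha_i)=\alpha_{i+1\bmod n}$, witnessed by edges $f_i\in\dom_{\alpha_i}\cap\cod_{\alpha_{i+1}}$: a cyclic tower of 2-cells, each placed directly above the previous.

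The main obstacle, and the technical heart of the proof, is to rule out this cyclic tower using planar topology. I would form a closed Jordan curve $\gamma$ by concatenating, for each $i$, a simple arc contained in the interior of $\alpha_{i+1}$ that joins the midpoint of $f_i$ to the midpoint of $f_{i+1}$. The curve $\gamma$ separates the plane into two components. Both $s_\cP$ and $t_\cP$ lie on the exterior face and hence in the unbounded component, so every vertex of $\cP$ lying strictly in the bounded component has both incoming and outgoing edges by \cref{defn:pasting-scheme}. Following outgoing edges from such a vertex, one cannot escape the bounded region except by crossing some $f_j$, and the orientation of $f_j:a_j\to b_j$ together with the relative position of $\alpha_j$ (below, containing $a_j,b_j$ on $\dom_{\alpha_j}$) and $\alpha_{j+1}$ (above) constrains all such crossings to go in one consistent direction. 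By the finiteness of $\cP$, a directed path started inside the bounded region must eventually close into a directed cycle, contradicting \cref{lem:acyclicity}. The edge case $n=2$—``mutual above''—needs a separate short argument: if $\alpha_1$ shared an edge of $\dom_{\alpha_0}$ and an edge of $\cod_{\alpha_0}$, then $\alpha_1$, as a planar face disjoint from $\alpha_0$, would have to surround $\alpha_0$, which is impossible for a topological disk. Formalizing the above planar-topology manipulation is the main technical hurdle; the pigeonhole setup and the non-cut-edge preliminary are routine.
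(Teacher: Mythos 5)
Your argument is correct in substance but takes a genuinely different route from the paper's. The paper gives a direct, constructive proof: starting at $s_\cP$ it walks along $\dom_\cP$, discarding the initial edges shared with $\cod_\cP$, and uses \cref{lem:partitioning}, \cref{lem start or end}, and \cref{rmk:cyclic-ordering-exterior} to show that whenever the walk leaves the source path of one interior face it must do so at the source vertex of another; since $t_\cP$ has no outgoing edge, the walk must complete some $\dom_\alpha$ before it ends, and that $\alpha$ is the desired top cell. You instead argue by contradiction: if every $\dom_\alpha$ meets the complement of $\dom_\cP$, then \cref{lem:partitioning} produces for each $\alpha$ a face $\phi(\alpha)$ lying directly above it, and finiteness forces a cycle in the ``lies directly above'' relation, which you exclude by a Jordan-curve argument. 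That exclusion is precisely (a special case of) the antisymmetry the paper establishes later, by path-theoretic means, in \cref{transitive lies above}; so your route re-proves that lemma topologically and buys a clean reduction to \cref{lem:acyclicity}, at the cost of the planar bookkeeping, while the paper's walk is longer to set up but entirely combinatorial.

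Two details in your sketch need repair, though neither is fatal. First, the direction of the trapped path: your curve $\gamma$ crosses each $f_i$ from its right face $\alpha_i$ into its left face $\alpha_{i+1}$, so the crossings all have the same local orientation and the bounded component contains either all the sources $a_i$ or all the targets $b_i$ of the $f_i$, depending on which side of the oriented curve is bounded. In the former case it is \emph{backward} paths (following incoming edges, which exist at every vertex other than $s_\cP$) that cannot escape, not forward ones; you must run the trapping argument in whichever direction is actually blocked, and in either case finiteness yields a directed cycle contradicting \cref{lem:acyclicity}. Second, your separate treatment of $n=2$ is both unnecessary---the Jordan curve built from one arc in each of the two faces already works there---and not rigorous as stated, since ``$\alpha_1$ would have to surround $\alpha_0$'' is not an immediate contradiction (the complement of a closed disk in the plane is connected); delete it and let the general argument cover $n=2$. (Note also $n=1$ cannot occur, since $\dom_\alpha$ and $\cod_\alpha$ share no edges by internal disjointness.) The preliminary about cut edges is harmless but not needed: the iteration of $\phi$ only requires one interior face to start from, which is the hypothesis.
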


We refer to an atomic 2-cell $\alpha$ with $\dom_\alpha \subset \dom_\cP$ as a 2-cell that is \textbf{at the top} of the pasting scheme. The dual of \cref{prop:top-face} proves also that any pasting scheme $\cP$ with at least one interior face contains at least one atomic 2-cell $\beta$ that is \textbf{at the bottom}, meaning that $\cod_\beta \subset \cod_\cP$.

\begin{proof}
We know from \cref{lem:partitioning} that each edge in $\dom_\cP$ either belongs to $\cod_\cP$ or belongs to some interior face. Starting from $s_\cP$ we remove  from $\cP$ all of the edges along $\dom_\cP$ that belong to $\cod_\cP$ until we meet the first edge $e \colon v \to w$ that instead belongs to $\dom_\alpha$ for some atomic 2-cell $\alpha$. Then $v=s_\alpha$ because certainly $s_\alpha \peq v$ but the only vertices that precede $v$ in $\cP$ belong to $\dom_\cP \cap \cod_\cP$ and in particular do not belong to $\alpha$. Continuing along $\dom_\cP$ from $v$ we either trace the full path $\dom_\alpha$, in which case in particular we meet the vertex $t_\alpha$, or we meet a vertex $x\neq t_\alpha$ so that the next edge $f \colon x \to y$ in $\dom_\cP$ does not belong to $\dom_\alpha$. By \cref{lem:partitioning} either $f$ is a source edge for another interior face or $f$ belongs to $\cod_\cP$. 

If $f \in \cod_\cP$, then by \cref{rmk:cyclic-ordering-exterior} we have the cyclic ordering
\[
\begin{tikzcd}[column sep=large]
u \arrow[dr, "\in \dom_\cP" description] & & y \\ 
& x \arrow[dl, "\in \dom_\alpha" description] \arrow[ur, "f \in \dom_\cP \cap \cod_\cP" description]  \\ 
p  & & q \arrow[ul, "\in \cod_\cP" description] 
\end{tikzcd}
\]
which is prohibited by \cref{lem prohibited local configuration}, so we must have $f \in \dom_{\alpha'}$ for another atomic 2-cell $\alpha'$.  Now by \cref{lem start or end}, we must have $x = s_{\alpha'}$.

The pasting scheme $\cP$ contains only finitely many atomic 2-cells, so this process cannot continue indefinitely, and we must eventually meet a target vertex of the face whose source path we are tracing. Thus, we must eventually find a top cell.
\end{proof}

\begin{cor}\label{cor:delete-inner-face}
If $\alpha$ is an atomic $2$-cell that is at the top of a pasting scheme $\cP$, then we can remove all edges and all interior vertices of $\dom_\alpha$ to obtain a new plane graph $\cP\backslash \alpha$.
The plane graph $\cP\backslash \alpha$ is also a pasting scheme, which contains all other vertices and atomic $1$-cells and every atomic $2$-cell of $\cP$ except $\alpha$.
\end{cor}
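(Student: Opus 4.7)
The plan is to delete the edges of $\dom_\alpha$ together with the vertices interior to $\dom_\alpha$, and to check that the resulting plane graph $\cP\backslash\alpha$ satisfies \cref{defn:pasting-scheme}, with new exterior source path obtained from $\dom_\cP$ by substituting $\cod_\alpha$ for $\dom_\alpha$ and with target path unchanged.

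First I would justify that this deletion is \emph{clean}: it disturbs no atomic 2-cell other than $\alpha$ and orphans no edge. Each $e\in\dom_\alpha\subseteq\dom_\cP$ has $\alpha$ on its right (by \cref{defn:interior-anchorable}) and the exterior face on its left (by \cref{defn:exterior-anchorable}), so $e$ is incident only to $\alpha$ and the exterior face; removing it therefore affects no other atomic 2-cell. At any interior vertex $v$ of $\dom_\alpha$, the two incident edges of $\dom_\alpha$ at $v$ are consecutive in the cyclic edge-ordering around $v$, once separated by $\alpha$ and once by the exterior face; since internal disjointness of $\dom_\alpha$ and $\cod_\alpha$ ensures that $v$ does not lie on $\cod_\alpha$, no further face meets $v$, forcing $v$ to have degree exactly $2$. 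Consequently $\cP\backslash\alpha$ is a finite plane graph retaining every other vertex, every other atomic 1-cell, and every atomic 2-cell of $\cP$ except $\alpha$; connectedness is preserved, since any path of $\cP$ through a deleted vertex or edge can be rerouted along $\cod_\alpha$, whose endpoints $s_\alpha$ and $t_\alpha$ survive.

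Next I would analyze the faces of $\cP\backslash\alpha$. Every interior face of $\cP\backslash\alpha$ is an interior face of $\cP$ different from $\alpha$, with the same bounding source and target paths, so interior anchorability is inherited from $\cP$. Deleting $\dom_\alpha$ fuses $\alpha$ with the old exterior face into the new exterior face. Writing $\dom_\cP=p_1\cdot\dom_\alpha\cdot p_2$, I set $\dom_{\cP\backslash\alpha}=p_1\cdot\cod_\alpha\cdot p_2$ and $\cod_{\cP\backslash\alpha}=\cod_\cP$. By the side-of-face analysis already performed (edges of $\cod_\alpha$ previously had $\alpha$ on their left, now merged into the new exterior face), these are exactly the edges incident to the new exterior face on the left and on the right, and both are non-empty directed paths from $s_\cP$ to $t_\cP$. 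Simplicity of $p_1\cdot\cod_\alpha\cdot p_2$ follows from \cref{lem:acyclicity}: an interior vertex $v$ of $\cod_\alpha$ satisfies $s_\alpha\prec v\prec t_\alpha$, whereas interior vertices of $p_1$ satisfy $v\prec s_\alpha$ and interior vertices of $p_2$ satisfy $t_\alpha\prec v$, so any coincidence would produce a directed cycle.

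Finally I would verify that $s_\cP$ and $t_\cP$ remain the unique local source and sink of $\cP\backslash\alpha$. Any surviving vertex other than $s_\alpha$ and $t_\alpha$ keeps the same incident edges as in $\cP$; at $s_\alpha$ the only lost edge is the first edge of $\dom_\alpha$, which is outgoing, and the first edge of $\cod_\alpha$ remains as an outgoing edge, so $s_\alpha$ is still not a local source (unless $s_\alpha=s_\cP$), and dually for $t_\alpha$. No new local sources or sinks are created, so \cref{defn:pasting-scheme} is satisfied with $s_{\cP\backslash\alpha}=s_\cP$ and $t_{\cP\backslash\alpha}=t_\cP$. The main obstacle is the degree-$2$ analysis of interior vertices of $\dom_\alpha$, which is what licenses the phrase ``remove all edges and all interior vertices of $\dom_\alpha$'' in the first place; once that is in hand, the remaining verifications reduce to bookkeeping supported by acyclicity.
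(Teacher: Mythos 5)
Your argument is correct, but it takes a genuinely different route from the paper's. The paper's proof is two lines: writing $\dom_\cP = r\cdot\dom_\alpha\cdot u$, it sets $p = r\cdot\cod_\alpha\cdot u$ and $q=\cod_\cP$, observes that $p$ lies above $q$, and invokes \cref{prop:parallel-arrow-pasting-scheme} to conclude that $p/q = \cP\backslash\alpha$ is a pasting scheme --- so all the work is outsourced to the already-established $p/q$ machinery, at the cost of leaving the identification $p/q=\cP\backslash\alpha$ (and hence the geometric content of the deletion) implicit. You instead verify \cref{defn:pasting-scheme} directly, and the heart of your argument --- that edges of $\dom_\alpha$ separate only $\alpha$ from the exterior face and that interior vertices of $\dom_\alpha$ have degree exactly two, so deletion merely fuses $\alpha$ into the exterior face --- is exactly the geometric fact the paper's proof sweeps into that identification. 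Your degree-two argument is slightly terse at one point: ruling out extra edges in the two angular sectors at an interior vertex $v$ of $\dom_\alpha$ really uses that $\partial\alpha$ is a simple closed curve (so $\alpha$ occupies a single sector at $v$) together with the immediate-precedence observation of \cref{rmk:cyclic-ordering-exterior} on the exterior side; the appeal to internal disjointness of $\dom_\alpha$ and $\cod_\alpha$ alone does not quite close this. The remaining steps (inheritance of interior anchorability, simplicity of $p_1\cdot\cod_\alpha\cdot p_2$ via \cref{lem:acyclicity}, and preservation of the unique local source and sink) are sound. Your approach buys a self-contained, elementary proof that makes the planar geometry explicit; the paper's buys brevity and reuses \cref{prop:parallel-arrow-pasting-scheme}, which it needs elsewhere anyway.
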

\begin{proof}
To say that $\alpha$ is a top 2-cell means that $\dom_\cP = r \cdot \dom_\alpha \cdot u$ for some paths $r$ and $u$. 
Define $p=r \cdot \cod_\alpha \cdot u$ and $q=\cod_\cP$. 
The path $p$ lies above $q$, since every path from $s_\cP$ to $t_\cP$ lies above $\cod_\cP$.
Then $p/q = \cP\backslash \alpha$ is a pasting scheme by \cref{prop:parallel-arrow-pasting-scheme}.
\end{proof}

Combining these two results it follows that any pasting scheme admits a ``pasting scheme presentation'' \`a la \cite[Definition 3.2.13]{JYbook}. The data of a pasting scheme presentation involves a choice, for each atomic 2-cell $\alpha$ in a pasting scheme, of a path of edges from $s_\cP$ to $s_\alpha$ and a path of edges from $t_\alpha$ to $t_\cP$, together with a specified linear ordering on the atomic 2-cells satisfying a condition we spell out in the statement:

\begin{cor}\label{cor:pasting-scheme-presentations} For any pasting scheme $\cP$ there exists a linear ordering of its atomic $2$-cells $\alpha_1 \geq \cdots \geq \alpha_n$ together with specified paths of edges $p_i$ from $s_\cP$ to $s_{\alpha_i}$ and $q_i$ from $t_{\alpha_i}$ to $t_\cP$ for each $i$ so that for each $1 \leq i < n$, the concatenated path $p_i \cdot \cod_{\alpha_i} \cdot q_i$ equals the concatenated path $p_{i+1} \cdot \dom_{\alpha_{i+1}} \cdot q_{i+1}$.
\end{cor}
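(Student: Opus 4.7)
The plan is to induct on the number $n$ of atomic $2$-cells of $\cP$, peeling off one top cell at a time using \cref{prop:top-face} and \cref{cor:delete-inner-face}. To make the induction go through cleanly, I would strengthen the statement to also record that the very first path in the sequence, namely $p_1 \cdot \dom_{\alpha_1} \cdot q_1$, is equal to $\dom_\cP$ (and dually that the final path equals $\cod_\cP$, although only the former is needed in the inductive step).

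For the base case $n = 0$ there are no cells to order and no conditions to verify, while for $n = 1$ the unique atomic $2$-cell must be a top cell by \cref{prop:top-face}, which immediately produces the decomposition $\dom_\cP = p_1 \cdot \dom_{\alpha_1} \cdot q_1$ and makes the compatibility condition vacuous.

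For the inductive step with $n \geq 2$, I would invoke \cref{prop:top-face} to choose a top $2$-cell $\alpha_1$, so that $\dom_\cP$ factors uniquely as $p_1 \cdot \dom_{\alpha_1} \cdot q_1$ with $p_1$ a path from $s_\cP$ to $s_{\alpha_1}$ and $q_1$ a path from $t_{\alpha_1}$ to $t_\cP$. Then \cref{cor:delete-inner-face} produces the sub pasting scheme $\cP \setminus \alpha_1$, which contains exactly the other $n-1$ atomic $2$-cells, has the same source and target vertices as $\cP$, and whose source path is precisely $p_1 \cdot \cod_{\alpha_1} \cdot q_1$ (since the construction in \cref{prop:parallel-arrow-pasting-scheme} takes $\dom_{p/q} = p$ with $p = p_1 \cdot \cod_{\alpha_1} \cdot q_1$). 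Applying the (strengthened) induction hypothesis to $\cP \setminus \alpha_1$ yields an ordering $\alpha_2 \geq \cdots \geq \alpha_n$ with chosen paths $p_i, q_i$ for $2 \leq i \leq n$, and in particular gives $p_2 \cdot \dom_{\alpha_2} \cdot q_2 = \dom_{\cP \setminus \alpha_1}$. Chaining these two equalities yields
\[
p_1 \cdot \cod_{\alpha_1} \cdot q_1 \;=\; \dom_{\cP \setminus \alpha_1} \;=\; p_2 \cdot \dom_{\alpha_2} \cdot q_2,
\]
which is the required compatibility at $i = 1$; the compatibilities for $2 \leq i < n$ are inherited verbatim from the inductive hypothesis, and the paths $p_i, q_i$ are paths in $\cP \setminus \alpha_1 \subseteq \cP$ so transfer without modification.

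The main obstacle is minor but worth flagging: the statement as written only asks for compatibility between \emph{consecutive} cells, but the induction cannot close without also controlling the first (and symmetrically the last) factorization $p_1 \cdot \dom_{\alpha_1} \cdot q_1 = \dom_\cP$. Strengthening the inductive assertion to include this boundary condition — which is automatic from \cref{prop:top-face} at each stage — resolves the difficulty, since it is exactly what is needed to identify the right-hand side of the compatibility equation with $\dom_{\cP \setminus \alpha_1}$.
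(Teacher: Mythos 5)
Your proof is correct and follows essentially the same route as the paper: both repeatedly select a top cell via \cref{prop:top-face}, remove it via \cref{cor:delete-inner-face}, and use the fact that the domain path of the resulting pasting scheme is $p_1\cdot\cod_{\alpha_1}\cdot q_1$ to get the compatibility between consecutive factorizations. The paper phrases this as a forward iteration rather than a formal induction, so it leaves implicit the boundary condition $p_1\cdot\dom_{\alpha_1}\cdot q_1 = \dom_\cP$ that you sensibly make explicit in your strengthened inductive hypothesis; this is a presentational difference only.
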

\begin{proof}
If $\cP$ has no interior faces, then there is nothing to prove.
By \cref{prop:top-face}, for any pasting scheme $\cP$ which has an interior face, we may find an atomic 2-cell $\alpha_1$ with $\dom_{\alpha_1} \subset \dom_\cP$. We choose one such atomic 2-cell and make it first in our linear ordering. We define the paths $p_1$ and $q_1$ to be the two components of the path $\dom_\cP \backslash \dom_{\alpha_1}$.\footnote{If $s_\cP = s_{\alpha_1}$ then $p_1$ is empty, while if $t_{\alpha_1} = t_\cP$ then $q_1$ is empty.}

By \cref{cor:delete-inner-face}, when we remove the interior edges of the path $\dom_{\alpha_1}$ we obtain a new pasting scheme $\cP\backslash \alpha_1$ whose domain path is the concatenation $p_1 \cdot \cod_{\alpha_1}\cdot q_1$. 
If $\cP\backslash \alpha_1$ has an interior face, we iterate the procedure described above: choosing another atomic 2-cell $\alpha_2$ with $\dom_{\alpha_2}$ a subset of this path and define $p_2$ and $q_2$ to be the complementary paths; note from this definition that $p_1 \cdot \cod_{\alpha_1}\cdot q_1 = p_2 \cdot \dom_{\alpha_2} \cdot q_2$. The result follows.
\end{proof}

\begin{defn}\label{lies directly above}
We define a binary relation on the atomic 2-cells of a pasting scheme as follows: for atomic 2-cells $\alpha$ and $\beta$ we say that $\alpha$ \textbf{lies directly above} $\beta$ whenever there is a 1-cell $e \in \cod_\alpha \cap \dom_\beta$. 
\end{defn}

\begin{lem}\label{transitive lies above}
The reflexive and transitive closure of the ``lies directly above'' relation is antisymmetric, defining a partial order on the set of atomic 2-cells in a pasting scheme.
\end{lem}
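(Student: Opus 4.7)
The plan is to induct on the number of atomic 2-cells of $\cP$. With no atomic 2-cells the relation is empty, and with a single atomic 2-cell $\alpha$, the internal disjointness required in \cref{defn:interior-anchorable} ensures $\cod_\alpha \cap \dom_\alpha = \emptyset$, so $\alpha$ does not lie directly above itself and the reflexive transitive closure coincides with equality and is trivially antisymmetric.

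For the inductive step, the key observation is that a top atomic 2-cell $\alpha$ supplied by \cref{prop:top-face}, with $\dom_\alpha \subset \dom_\cP$, has nothing lying directly above it: an edge $e \in \cod_\beta \cap \dom_\alpha$ would simultaneously lie in the target path of an interior face $\beta$ and in the exterior source path $\dom_\cP$, which is forbidden by part (ii) of \cref{lem:partitioning}. Suppose for contradiction that two distinct atomic 2-cells $\gamma \neq \delta$ witness a failure of antisymmetry; unpacking, we obtain a cyclic chain $\alpha_1, \alpha_2, \ldots, \alpha_k = \alpha_1$ with $\alpha_i$ lying directly above $\alpha_{i+1}$ and with at least two distinct entries. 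Since nothing lies directly above $\alpha$, the top cell cannot occur anywhere in this chain, so the chain lies entirely among the atomic 2-cells of $\cP \setminus \alpha$, which is a pasting scheme with one fewer atomic 2-cell by \cref{cor:delete-inner-face}.

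To close the induction I need the chain to remain a cyclic chain after reinterpretation inside $\cP \setminus \alpha$; that is, the relation "lies directly above" must be invariant under passage to the smaller pasting scheme. This is the only mild subtlety of the argument, and it is immediate: the only edges removed in forming $\cP \setminus \alpha$ are the edges of $\dom_\alpha$, and by \cref{lem:partitioning} these edges do not appear in the domain or codomain path of any other interior face. Hence every surviving atomic 2-cell retains the same $\dom$ and $\cod$ in $\cP \setminus \alpha$, so "lies directly above" restricts verbatim to these cells, and the inductive hypothesis applied to $\cP \setminus \alpha$ yields the desired contradiction.
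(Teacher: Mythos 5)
Your proof is correct, but it takes a genuinely different route from the paper's. The paper argues directly: from a single instance of ``$\alpha$ lies directly above $\beta$'' it manufactures, for any global path $q$ containing $\dom_\beta$, a global path $p$ containing $\cod_\alpha$ with $p$ strictly above $q$; iterating along a chain produces a weakly decreasing sequence of paths $p_1 > p_1' \geq p_2 > p_2' \geq \cdots$, and a cycle is ruled out because $\cod_{\alpha_m}$ would have to be contained in an intermediate path that by construction omits it. You instead induct on the number of atomic 2-cells: a top cell $\alpha$ furnished by \cref{prop:top-face} has nothing directly above it (an edge of $\cod_\beta\cap\dom_\alpha$ would violate the partition \cref{lem:partitioning}\ref{itm:left-partition}, since $\dom_\alpha\subset\dom_\cP$), so any cycle avoids $\alpha$ and lives in $\cP\backslash\alpha$, where the relation restricts verbatim because \cref{lem:partitioning} shows no other face's boundary meets $\dom_\alpha$; then \cref{cor:delete-inner-face} and the inductive hypothesis finish. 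All the results you cite are established before this lemma, so there is no circularity. Your approach is arguably more elementary: it leans only on the already-proved structural facts about top cells and deletion and sidesteps the paper's reliance on the ``lies above'' machinery for global paths, including the slightly delicate step that a subpath contained in both $p_1'$ and $p_m'$ must be contained in the intermediate path $p_m$. The paper's argument, in exchange, yields the quantitatively stronger intermediate statement that each step of the relation strictly lowers an associated global path, which makes the acyclicity geometrically transparent without any induction on the size of the scheme.
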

\begin{proof}
Suppose $\alpha$ lies directly above $\beta$ and we are given a path $q$ from $s_{\cP}$ to $t_{\cP}$ which contains $\dom_{\beta}$.
Then, by choosing the maximal paths $r\colon s_{\cP} \to s_{\alpha}$ and $r' \colon t_\alpha \to t_{\cP}$, we obtain a path  $p = r \cdot \cod_\alpha \cdot r'$ from $s_{\cP}$ to $t_{\cP}$ that contains $\cod_\alpha$. To see that $p = r \cdot \cod_\alpha \cdot r'$ lies above $q$, let $s_{\cP} = v_0 \prec v_1 \prec \cdots \prec v_n = t_{\cP}$ be the vertices of intersection.
We know that there is an edge $e \colon v_i \to v_{i+1}$ in $\dom_\beta \cap \cod_\alpha$.
Let $p_-,q_- \colon s_{\cP} \to v_{i+1}$ and $p_+,q_+ \colon v_i \to t_\cP$ be the restrictions.
It is enough to show that $p_-$ lies above $q_-$ and $p_+$ lies above $q_+$; we only prove the latter. By \cref{lem only test half}, it is enough to show that $\suc(p, v_k) \geq \suc(q,v_k)$ for $i \leq k < n$. 
By hypothesis, $\suc(p,v_i) = e = \suc(q,v_i)$.
Let $i < k < n$, and write $e_1 = \pred(p,v_k)$, $e_2 = \suc(p,v_k)$, and $e_2' = \suc(q,v_k)$; we want to show that $e_2 \geq e_2'$.
If $e_2$ is on $\cod_\alpha$, then so is $e_1$, and whenever we have this configuration  on the codomain of a 2-cell, it is automatic that $e_2$ is maximal. Thus in this case $e_2 \geq e_2'$.
If $e_2$ is not on $\cod_\alpha$, then it is on the maximal path $r'$ from $t_{\alpha}$ to $t_{\cP}$, hence $e_2 \geq e_2'$.

The previous paragraph implies that given any chain $\alpha_1, \alpha_2, \ldots, \alpha_m$ of atomic 2-cells of $\cP$ with $\alpha_i$ directly above $\alpha_{i+1}$, we may construct 
a sequence of paths from $s_\cP$ to $t_\cP$, say
$p_1 > p_1' \geq p_2 > p_2' \cdots \geq p_m > p_m'$, with 
$\dom_{\alpha_i}$ contained in $p_i$ and $\cod_{\alpha_i}$ contained in $p_i'$ as follows. Choose $p_m'$ to complete $\cod_{\alpha_m}$ arbitrarily to a path from $s_{\cP}$ to $t_{\cP}$. 
We get $p_i$ from $p_i'$ by removing $\cod_{\alpha_i}$ and replacing it with $\dom_{\alpha_i}$.
We obtain $p_{i-1}'$ from $p_i$ using the construction of the previous paragraph.
If in our chain we have $\alpha_1 = \alpha_m$ and $m>1$, then $\cod_{\alpha_1} = \cod_{\alpha_m}$ is contained in both $p_1'$ and $p_m'$, hence is contained in the intermediate path $p_m$.
But by construction $p_m$ does not contain $\cod_{\alpha_m}$.
\end{proof}

\subsection{The hom-posets of a pasting scheme}\label{ssec:2cat}

A pasting scheme defines an instance of a \textbf{2-computad} (also called a 2-polygraph), this being a directed graph together with a collection of atomic 2-cells, each with specified source and target paths of edges in the  graph \cite{Street:Limits}. Any 2-computad generates a free 2-category, which specializes to define the free 2-category $\mF\cP$ associated to a pasting scheme $\cP$, which has 
\begin{itemize}
    \item objects defined to be the vertices of the underlying graph of $\cP$,
    \item 1-cells defined to be directed paths of edges in the underlying graph of $\cP$, and
    \item 2-cells, generated by whiskered atomic 2-cells  under vertical composition, satisfying relations which are a bit more complicated to describe.
\end{itemize}
In general, a 2-cell may be expressible in multiple ways as a vertical composite of whiskered atomics. A simple example is given by the 2-computad $[2]([1],[1])$ which generates the free horizontally composable pair of 2-cells. Here the horizontal composite can be expressed as a whiskered composite of the atomic 2-cells in two different ways.\footnote{The underlying 1-category of a free 2-category is free as a 1-category. In this case, it is the free category on the directed graph of the 2-computad. Note however that the hom-categories of a free 2-category need not be free, as this example illustrates.}

A pasting scheme $\cP$ is a very special sort of 2-computad where the underlying directed graph is a plane graph and the 2-cells are compatibly oriented cells inhabiting the interior faces. The aim in this section is to more explicitly describe the hom-categories in the  free 2-category $\mF\cP$ generated by the pasting scheme, expanding on the definition sketched above. For this, we make use of the main theorem of Power \cite{PowerPasting}.

\begin{thm}[Power]\label{thm:power} Any pasting scheme defines a unique composite $2$-cell, which can be understood as the vertical composite of the whiskered atomic $2$-cells in any pasting scheme presentation.
\end{thm}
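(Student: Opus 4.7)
The plan is to prove the theorem in two stages: first construct a composite from any pasting scheme presentation, then show that this composite is independent of the chosen presentation.

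For existence, given a pasting scheme presentation $\alpha_1 \geq \cdots \geq \alpha_n$ with paths $p_i$ and $q_i$ produced by \cref{cor:pasting-scheme-presentations}, I whisker each atomic $2$-cell to obtain a $2$-cell
\[
p_i \cdot \alpha_i \cdot q_i \colon p_i \cdot \dom_{\alpha_i} \cdot q_i \Rightarrow p_i \cdot \cod_{\alpha_i} \cdot q_i.
\]
The matching condition $p_i \cdot \cod_{\alpha_i} \cdot q_i = p_{i+1} \cdot \dom_{\alpha_{i+1}} \cdot q_{i+1}$ makes these $2$-cells vertically composable, and the inductive construction in \cref{cor:pasting-scheme-presentations} starts from $\dom_\cP$ and ends at $\cod_\cP$, so the vertical composite is a $2$-cell with the correct boundary.

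For uniqueness, I would show that any two pasting scheme presentations of $\cP$ produce the same composite. The key observation is that the linear orderings of atomic $2$-cells produced by \cref{cor:pasting-scheme-presentations} are precisely the linear extensions of the partial order on atomic $2$-cells established in \cref{transitive lies above}: a $2$-cell is a top cell of the remaining pasting scheme at some stage if and only if no other remaining $2$-cell lies above it. Given this, the argument reduces to two ingredients:
\begin{enumerate}[label=(\alph*)]
    \item (Elementary swap.) If $\alpha_i$ and $\alpha_{i+1}$ are incomparable in the partial order, then swapping them yields another valid pasting scheme presentation with suitably modified whiskering paths, and the middle-four interchange law in a strict $2$-category identifies the two vertical composites. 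Intuitively, their active regions are disjoint, so after extracting a common outer whiskering the two cells appear as horizontally composable $2$-cells to which interchange applies.
    \item (Connectivity.) Any two linear extensions of a finite poset are connected by a sequence of transpositions of adjacent incomparable elements, a standard combinatorial fact.
\end{enumerate}
Combining (a) and (b), the composite depends only on $\cP$, not on the chosen presentation.

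The main obstacle I anticipate is justifying (a), which requires a careful comparison of the whiskering paths before and after the swap. Given adjacent incomparable $\alpha_i$ and $\alpha_{i+1}$, I need to identify a common outer whiskering through which both cells factor, so that the local composite becomes the horizontal composite of two whiskered atomics. Applying \cref{prop:parallel-arrow-pasting-scheme} to an appropriate top-and-bottom pair of paths surrounding both cells should exhibit a sub pasting scheme in which the regions supporting $\alpha_i$ and $\alpha_{i+1}$ are disjoint, putting the local picture into the standard form where the strict $2$-categorical interchange law immediately produces the required equality.
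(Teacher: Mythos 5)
The paper does not actually prove this statement: it is imported wholesale from Power's paper \cite{PowerPasting} (the sentence immediately preceding the theorem reads ``we make use of the main theorem of Power''), so there is no in-paper argument to compare yours against. Your strategy---identify presentation orders with linear extensions of the ``lies directly above'' poset of \cref{transitive lies above}, connect any two linear extensions by adjacent transpositions of incomparable elements, and absorb each transposition by the interchange law---is a legitimate and essentially classical route, close in spirit to the direct proof of Johnson and Yau rather than to Power's original induction. The existence half and ingredient (b) are fine as stated.

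There are, however, real gaps. First, your ``key observation'' that the orders produced by \cref{cor:pasting-scheme-presentations} are \emph{precisely} the linear extensions of the partial order is asserted, not proved, and the connectivity argument needs it in full: the intermediate orders produced by adjacent transpositions are only guaranteed to be linear extensions, so you must show every linear extension is realizable as a presentation. The substantive point is that once every cell lying above $\alpha$ has been processed, the edges of $\dom_\alpha$ all lie on the current frontier path \emph{and form a contiguous subpath of it in the correct order}; this follows from \cref{lem:partitioning} (each edge lies in the source path of at most one interior face, so an edge entering the frontier via $\cod_\beta$ stays there until $\alpha$ itself is processed) plus the observation that frontier paths are simple by \cref{lem:acyclicity}. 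The same lemma supplies the missing ingredient for your swap (a): two incomparable cells whose domains both lie on a common frontier path have edge-disjoint domains, hence disjoint contiguous subpaths with one entirely to the left of the other, which is exactly the configuration where middle-four interchange applies; your detour through \cref{prop:parallel-arrow-pasting-scheme} is not needed. Second, and more seriously for how the paper uses the theorem: in \cref{cor:hom-poset} the statement is invoked to conclude that $\mF(p/q)(x,y)$ contains a \emph{unique} arrow from $\dom$ to $\cod$, i.e., that there is exactly one $2$-cell with that boundary in the free $2$-category. Your argument only shows that all presentations yield the same $2$-cell; you must additionally show that every $2$-cell $\dom_\cP \Rightarrow \cod_\cP$ in $\mF\cP$---that is, every vertical composite of whiskered atomics with that boundary---uses each atomic $2$-cell exactly once and hence is a presentation. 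This requires an ``each face is swept exactly once'' argument (each whiskered atomic moves the intermediate path strictly downward past one face, and a face once passed cannot be passed again), and it must be made without circular appeal to \cref{thm:pasting-poset}, whose proof depends on this theorem.
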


An interpretation of Power's theorem is that there is a unique 2-cell from $\dom_\cP$ to $\cod_\cP$ in $\mF \cP$. Leveraging our understanding of sub pasting schemes of a pasting scheme $\cP$, we can deduce also:

\begin{cor}\label{cor:hom-poset} For any objects $x,y$ in $\cP$, the hom-category $\mF \cP(x,y)$ is a poset. Moreover there is a $2$-cell in $\mF \cP$ from a path $p$ from $x$ to $y$ to a path $q$ from $x$ to $y$ if and only if $p$ lies above the path $q$.
\end{cor}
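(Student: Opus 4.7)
The plan is to derive both conclusions from Power's theorem---interpreted, as in the remark preceding the statement, as asserting that there is a unique 2-cell $\dom_{\cP'} \Rightarrow \cod_{\cP'}$ in $\mF\cP'$ for every pasting scheme $\cP'$---combined with the description of sub pasting schemes in \cref{prop:parallel-arrow-pasting-scheme}.

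For existence of a 2-cell $p \Rightarrow q$ when $p$ lies above $q$, I would apply \cref{prop:parallel-arrow-pasting-scheme} to produce the sub pasting scheme $p/q \subseteq \cP$ with $\dom_{p/q} = p$ and $\cod_{p/q} = q$. Since $p/q$ is a sub-2-computad of $\cP$, its inclusion extends by freeness to a 2-functor $F \colon \mF(p/q) \to \mF\cP$, and the image under $F$ of the unique composite 2-cell produced by \cref{thm:power} applied to $p/q$ is the desired 2-cell.

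For the ``only if'' direction, I would decompose any 2-cell $\sigma \colon p \Rightarrow q$ in $\mF\cP$ as a vertical composite of whiskered atomic 2-cells
\[
p = p_0 \Rightarrow p_1 \Rightarrow \cdots \Rightarrow p_k = q,
\]
where the $i$-th step replaces a subpath $\dom_{\alpha_i}$ inside $p_{i-1}$ by $\cod_{\alpha_i}$. A direct inspection of the cyclic orderings at $s_{\alpha_i}$ and $t_{\alpha_i}$ using \cref{cvn:inp-out-order} shows that $\dom_{\alpha_i}$ lies above $\cod_{\alpha_i}$, and since this swap preserves the local data of \cref{defn:lies above} at every other shared vertex, each $p_{i-1}$ lies above $p_i$. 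Transitivity of ``lies above'' (established just before \cref{def:p-over-q}) then yields that $p$ lies above $q$.

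The poset claim amounts to showing any such $\sigma$ agrees with the 2-cell built in the second paragraph. Applied to initial and final subchains of the decomposition, the preceding argument shows each intermediate $p_i$ lies below $p$ and above $q$; by \cref{prop:parallel-arrow-pasting-scheme} each $p_i$ is therefore a path in $p/q$, each $\dom_{\alpha_i}$ and $\cod_{\alpha_i}$ lies in $p/q$, and the same proposition identifies $\alpha_i$ as an atomic 2-cell of $p/q$. The whole decomposition consequently lifts to a 2-cell $\widetilde\sigma \colon p \Rightarrow q$ in $\mF(p/q)$ with $F(\widetilde\sigma) = \sigma$, and uniqueness of $\widetilde\sigma$ from \cref{thm:power} pins down $\sigma$. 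The step I expect to be most delicate is the atomic verification that $\dom_\alpha$ lies above $\cod_\alpha$---a local check at the two corner vertices of the face---but this should follow quickly from the conventions in \cref{cvn:inp-out-order}.
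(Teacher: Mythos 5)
Your proposal is correct and follows essentially the same route as the paper: the ``only if'' direction and antisymmetry come from writing a 2-cell as a vertical composite of whiskered atomics (each of which moves downward in the ``lies above'' order), while existence and uniqueness come from applying \cref{thm:power} to the sub pasting scheme $p/q$ of \cref{prop:parallel-arrow-pasting-scheme} and using that this inclusion is full on atomic 2-cells and on paths between $p$ and $q$. The only difference is one of detail---you spell out the local check at $s_{\alpha_i}$, $t_{\alpha_i}$ and the lifting of the whole decomposition to $\mF(p/q)$, which the paper leaves implicit.
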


Henceforth, we refer to $\mF \cP(x,y)$ as the \textbf{hom-poset} from $x$ to $y$ in $\cP$.

\begin{proof}
By definition of $\mF \cP$, the objects in $\mF \cP(x,y)$ are paths from $x$ to $y$ in $\cP$. We claim there is a morphism $p$ to $q$ in $\mF \cP(x,y)$ if and only if $p$ lies above $q$. The only arrows in $\mF \cP (x,y)$ are vertically composable sequences of whiskered atomic 2-cells. For every whiskered atomic 2-cell, the source path is above the target path. So if $p$ is not above $q$, then there can be no arrow from $p$ to $q$. 

If $p$ lies above $q$ then \cref{prop:parallel-arrow-pasting-scheme} proves that the paths $p$ and $q$ define a sub pasting scheme $p/q\subset \cP$. By \cref{thm:power}, the hom-category $\mF p/q (x,y)$ contains a unique arrow from $p$ to $q$. Since the inclusion $p/q\subset \cP$ is full on atomic 2-cells and full on paths that lie below $p$ and above $q$, it follows that $\mF \cP(x,y)$ contains a unique  arrow from $p$ to $q$ in $\cP$.
\end{proof}

\begin{cor}\label{cor:loc_ff_sub}
If $p$ lies above $q$ in a pasting scheme $\cP$, then the inclusion induces an inclusion of $2$-categories
$\mF p/q \to \mF \cP$ which is locally fully faithful:  $\mF p/q (x,y) \to \mF \cP (x,y)$ is a fully-faithful inclusion of posets for all vertices $x$ and $y$ in $p/q$.
\end{cor}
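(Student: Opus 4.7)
The plan is to assemble this directly from \cref{prop:parallel-arrow-pasting-scheme} and \cref{cor:hom-poset}. First, since $p/q$ is a plane subgraph of $\cP$ whose set of atomic 2-cells is a subset of the atomic 2-cells of $\cP$ (namely those lying in the bounded regions $C_i$), the evident inclusion of 2-computads $p/q \hookrightarrow \cP$ induces a 2-functor $\mF p/q \to \mF\cP$ by functoriality of the free 2-category construction. This is injective on objects because the vertices of $p/q$ form a subset of the vertices of $\cP$, and we will argue that on each hom it gives a fully faithful embedding of posets, which will also imply injectivity on 1-cells and 2-cells and hence the inclusion property.

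So fix vertices $x,y$ of $p/q$. By \cref{cor:hom-poset}, both $\mF p/q(x,y)$ and $\mF\cP(x,y)$ are posets; thus faithfulness on 2-cells is automatic, and it remains only to analyze objects and morphisms of these posets. For objects: \cref{cor:hom-poset} identifies the objects of $\mF p/q(x,y)$ with paths from $x$ to $y$ in $p/q$, and by \cref{prop:parallel-arrow-pasting-scheme} these are exactly the paths from $x$ to $y$ in $\cP$ that lie below $p$ and above $q$, which is a subset of the objects of $\mF\cP(x,y)$ and hence gives an injection on 1-cells.

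For fullness on morphisms, let $r_1,r_2$ be paths from $x$ to $y$ in $p/q$ such that there is a 2-cell $r_1 \Rightarrow r_2$ in $\mF\cP$. By \cref{cor:hom-poset}, this is equivalent to $r_1$ lying above $r_2$ in $\cP$. But the relation ``lies above'' (\cref{defn:lies above}) is defined purely in terms of the cyclic orderings $\inp(v)$ and $\out(v)$ at each common vertex $v$, and these orderings for $v$ in $p/q$ are inherited from the planar embedding, hence agree with their counterparts in $\cP$ restricted to the edges present in $p/q$. Since $r_1$ and $r_2$ are entirely within $p/q$, the verification of $\pred(r_1,v)\geq \pred(r_2,v)$ and $\suc(r_1,v)\geq \suc(r_2,v)$ in $\cP$ is literally the same verification in $p/q$. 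Thus $r_1$ lies above $r_2$ in $p/q$, and another application of \cref{cor:hom-poset} produces the required 2-cell in $\mF p/q(x,y)$.

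The only real subtlety, and hence the ``hard'' (though still easy) step, is the last one: one needs to observe that the ``lies above'' relation is intrinsic to the planar structure and is stable under restriction to the full plane subgraph $p/q$. Once this is in hand, everything else follows mechanically from the poset-hom description provided by \cref{cor:hom-poset} and the combinatorial identification of $p/q$ from \cref{prop:parallel-arrow-pasting-scheme}.
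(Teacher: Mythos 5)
Your proposal is correct and follows essentially the same route as the paper, whose proof is a one-line appeal to \cref{prop:parallel-arrow-pasting-scheme} for local fullness; your unpacking via \cref{cor:hom-poset} applied to both $\mF p/q$ and $\mF\cP$, together with the observation that the ``lies above'' relation is computed from the inherited planar cyclic orderings and hence restricts to the full plane subgraph $p/q$, is a faithful elaboration of that argument. The one point you correctly flag as the real content --- compatibility of the orderings on $\inp(v)$ and $\out(v)$ under restriction, including at the new extremal vertices $x=s_{p/q}$ and $y=t_{p/q}$ where the ordering convention is pinned by $\dom_{p/q}=p$ and $\cod_{p/q}=q$ --- does check out, and is no less detailed than what the paper itself supplies.
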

\begin{proof}
This 2-functor is evidently an inclusion, and the local fullness comes from \cref{prop:parallel-arrow-pasting-scheme}.
\end{proof}

\begin{prop}\label{prop:concat_ff}
If $\cP$ is a pasting scheme and $x,y,z$ are objects, then the composition map
\[
\begin{tikzcd}  \mF \cP(y,z) \times \mF \cP(x,y) \arrow[r, "\circ", hook] & \mF \cP(x,z)
\end{tikzcd}
\]
is a fully-faithful inclusion of posets. 
\end{prop}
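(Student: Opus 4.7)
The plan is to unwind the statement using the explicit description of hom-posets from \cref{cor:hom-poset}: the objects of $\mF\cP(x,y)$ are directed paths from $x$ to $y$, and $p \leq q$ iff $p$ lies above $q$. The composition functor sends $(p,q)$ with $p\colon y\to z$ and $q\colon x\to y$ to the concatenated path $p \cdot q$. I need to check three things: (i) the map is injective on objects, (ii) it is monotone, and (iii) it reflects the order. Throughout, the essential ingredient is acyclicity (\cref{lem:acyclicity}).

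First I would prove injectivity on objects. Given two decompositions $p_1 \cdot q_1 = p_2 \cdot q_2$ of the same path from $x$ to $z$ through $y$, I argue that any directed path in $\cP$ visits each vertex at most once: otherwise the sub-path between two visits to the same vertex is a directed cycle, contradicting \cref{lem:acyclicity}. In particular, the intermediate vertex $y$ appears exactly once in $p_1 \cdot q_1$, forcing $q_1 = q_2$ and $p_1 = p_2$.

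Next I would verify monotonicity. Suppose $q_1$ lies above $q_2$ and $p_1$ lies above $p_2$; I want $p_1 \cdot q_1$ to lie above $p_2 \cdot q_2$. By \cref{defn:lies above} I must check that at every vertex $v$ common to both concatenations, $\pred$ and $\suc$ values satisfy the required inequalities. A common vertex $v$ cannot be an interior vertex of $q_i$ for one index and an interior vertex of $p_j$ for the other, for then acyclicity together with $v \preccurlyeq y$ and $y \preccurlyeq v$ would force $v = y$. So $v$ is either a common vertex of $q_1$ and $q_2$, or a common vertex of $p_1$ and $p_2$, or $v=y$. At vertices of the first kind, $\pred$ and $\suc$ for the concatenation coincide with those for the $q_i$; analogously for the second kind; and at $v=y$, the $\pred$ values come from the $q_i$ while the $\suc$ values come from the $p_i$. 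In every case the required inequalities follow from the hypotheses.

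Finally I would prove reflection of order. Suppose $p_1 \cdot q_1$ lies above $p_2 \cdot q_2$. Any common vertex $v$ of $q_1$ and $q_2$ is also a common vertex of the concatenations, and the predecessor and successor of $q_i$ at $v$ agree with those of $p_i \cdot q_i$ at $v$ (using that at the endpoint $y$ the successor in $q_i$ is the null element $\ast$, which is dominated by anything in $\out(y)_+$). Hence $q_1$ lies above $q_2$ by \cref{defn:lies above}, and the symmetric argument gives $p_1$ lies above $p_2$. Combined with \cref{cor:hom-poset}, this yields $(p_1,q_1) \leq (p_2,q_2)$ in the product poset, completing the proof. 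The only genuine subtlety is the trichotomy of common vertices in the monotonicity step, which is immediate from acyclicity once pointed out.
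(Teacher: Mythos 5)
Your proof is correct, but it takes a genuinely different route from the paper's. The paper does not verify the order conditions by hand: it takes the top-most and bottom-most paths from $x$ to $y$ and from $y$ to $z$ supplied by \cref{lem:maximal-path-x-to-y}, concatenates them to form paths $p$ and $q$ from $x$ to $z$, passes to the sub pasting scheme $p/q$ of \cref{prop:parallel-arrow-pasting-scheme}, observes that every path from $x$ to $z$ in $p/q$ passes through $y$ so that splitting at $y$ identifies the maximal hom-poset $\mF p/q(x,z)$ with the product $\mF\cP(y,z)\times\mF\cP(x,y)$, and then concludes by \cref{cor:loc_ff_sub}, whose full faithfulness ultimately rests on Power's theorem via \cref{cor:hom-poset}. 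You instead unwind \cref{defn:lies above} directly and check injectivity, monotonicity, and order-reflection vertex by vertex, with acyclicity (\cref{lem:acyclicity}) supplying both the uniqueness of the splitting at $y$ and the trichotomy of common vertices; your observation that a vertex lying on some $q_i$ and some $p_j$ must equal $y$ is exactly the point that makes the local bookkeeping go through. What your version buys is a self-contained, elementary argument needing nothing beyond \cref{cor:hom-poset} and the local definitions; what the paper's version buys is brevity given the already-established sub-pasting-scheme machinery, together with the conceptually useful identification of the image of $\circ$ as a maximal hom-poset of a sub pasting scheme. One small correction: in the order-reflection step you justify the comparison at the endpoint $y$ by saying the null element $\ast$ is ``dominated by anything in $\out(y)_+$,'' but \cref{cvn:inp-out-order} adjoins $\ast$ as an \emph{incomparable} element. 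This does not damage the argument, since the inequality actually required there is $\suc(q_1,y)\geq\suc(q_2,y)$ with both sides equal to $\ast$, which holds by reflexivity --- but the justification as stated is false and should be rephrased.
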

\begin{proof}
Assume $x \peq y$ and $y\peq z$ otherwise there is nothing to prove.
Let $p_-$ be the path from $x$ to $y$ lying above all other paths from $x$ to $y$ and $q_-$ be the path from $x$ to $y$ lying below all other paths from $x$ to $y$.
Likewise let $p_+$ and $q_+$ the top-most and bottom-most paths from $y$ to $z$.
Let $p = p_- \cdot p_+$ and $q = q_- \cdot q_+$ be the composite paths from $x$ to $z$.
Any path in $p/q$ from $x$ to $z$ contains the vertex $y$, 
 and splitting at $y$ gives a functor from the maximal hom poset $\mF p/q (x,z)$ to $\mF \cP(y,z) \times \mF \cP(x,y) = \mF p_+/q_+(y,z) \times \mF p_-/q_-(x,y)$.
This is a bijection of objects and admits an inverse map of posets, so identifies 
$\mF \cP(y,z) \times \mF \cP(x,y)$ with $\mF p/q (x,z)$.
We then apply \cref{cor:loc_ff_sub} to deduce the result.
\end{proof}

We now give a combinatorial description of the maximal poset $\mF \cP(s_\cP,t_\cP)$ of a pasting scheme $\cP$. We note however that, given any two vertices $x$ and $y$ of $\cP$, by \cref{cor:sub-pasting-scheme} the hom-poset $\mF \cP(x,y)$ can always be realized as the maximal hom-poset
\[\mF \cP(x, y)\cong\mF \cP_{x,y}(s_{\cP_{x,y}}, t_{\cP_{x,y}})\]
 of the sub pasting scheme $\cP_{x,y}$.

\begin{thm}\label{thm:pasting-poset} Suppose a pasting scheme $\cP$ has atomic $2$-cells $\gamma_1,\ldots, \gamma_n$. The maximal hom-category $\mF \cP(s_\cP,t_\cP)$ is isomorphic to the full subposet $P^{\geq}$ of the cube $\{0 \leq 1\}^{\{\gamma_1,\ldots, \gamma_n\}}$ spanned by the points that belong to the regions satisfying the coordinate relations $\gamma_i \geq \gamma_j$ whenever $\gamma_i$ lies directly above $\gamma_j$.\footnote{Note that $P^{\geq}$ could be defined equally by the transitive closure of the ``lies directly above'' relation of \cref{lies directly above}. If a point in 
$\{0 \leq 1\}^n$ belongs to the region $\gamma_i \geq \gamma_j$ and also to the region $\gamma_j \geq \gamma_k$ then the point also belongs to the region $\gamma_i \geq \gamma_k$.}
\end{thm}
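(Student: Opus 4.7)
The plan is to construct an explicit order-isomorphism $\Phi \colon \mF\cP(s_\cP,t_\cP) \to P^{\geq}$. Given a path $p$ from $s_\cP$ to $t_\cP$, the path $p$ lies below $\dom_\cP$ and above $\cod_\cP$, so \cref{prop:parallel-arrow-pasting-scheme} produces sub pasting schemes $\dom_\cP/p$ and $p/\cod_\cP$ whose atomic 2-cells together partition those of $\cP$. I define $\Phi(p)_i = 1$ if $\gamma_i$ is an atomic 2-cell of $\dom_\cP/p$, which I also describe as saying ``$\gamma_i$ lies above $p$'', and $\Phi(p)_i = 0$ otherwise.

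To see $\Phi(p) \in P^{\geq}$, suppose $\gamma_i$ lies directly above $\gamma_j$ via an edge $e \in \cod_{\gamma_i} \cap \dom_{\gamma_j}$, and suppose $\Phi(p)_j = 1$. Then $\gamma_j$ and its source path, and in particular the edge $e$, lie in the closed region $\overline{\dom_\cP/p}$; since the face $\gamma_i$ sits on the opposite side of $e$ from $\gamma_j$, it also lies in $\overline{\dom_\cP/p}$, giving $\Phi(p)_i = 1$.

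For bijectivity, injectivity is immediate: $\overline{\dom_\cP/p}$ equals $\overline{\dom_\cP}$ together with the union of the face closures for indices $i$ with $\Phi(p)_i = 1$, and $p$ is recovered as the lower boundary of this region. Surjectivity I prove by induction on the number of atomic 2-cells $n$ of $\cP$; the case $n = 0$ is vacuous since $\cP$ then has a unique path. For the inductive step, given $S \subseteq \{\gamma_1,\ldots,\gamma_n\}$ satisfying the cube condition---equivalently, $S$ is upward-closed under the partial order of \cref{transitive lies above}, per the footnote to the theorem---either $S = \emptyset$, in which case take $p = \dom_\cP$, or pick $\alpha \in S$ maximal in $S$. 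Such $\alpha$ is automatically a top cell of $\cP$: any $\beta$ lying directly above $\alpha$ would lie in $S$ by upward-closedness, contradicting maximality. By \cref{cor:delete-inner-face} the smaller pasting scheme $\cP \setminus \alpha$ is well-defined; removing $\alpha$ does not alter the edge incidences of the remaining cells, so the direct-above relation on them is unchanged and $S \setminus \{\alpha\}$ remains upward-closed in $\cP \setminus \alpha$. The inductive hypothesis yields a path $p$ in $\cP \setminus \alpha$ whose above-set is $S \setminus \{\alpha\}$; since $\cod_\alpha$ lies in the domain path of $\cP \setminus \alpha$ and $p$ lies below it, the face $\alpha$ lies above $p$ in $\cP$, so $p$ viewed as a path of $\cP$ has above-set $S$.

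Finally $\Phi$ is order-preserving: by \cref{cor:hom-poset}, $p \leq q$ in $\mF\cP(s_\cP,t_\cP)$ holds exactly when $p$ lies above $q$, which is equivalent to the containment $\overline{\dom_\cP/p} \subseteq \overline{\dom_\cP/q}$ of plane regions, and hence to $\Phi(p) \leq \Phi(q)$ coordinatewise. The main obstacle is the surjectivity argument, specifically establishing that a maximal element of an upward-closed $S$ is forced to be a top cell of $\cP$ so that the inductive removal can proceed.
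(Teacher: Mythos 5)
Your construction of the forward map is exactly the paper's ``coordinatization'' ($\Phi(p)_i=1$ iff $\gamma_i$ lies in the bounded region between $\dom_\cP$ and $p$), and your order-preservation argument matches the paper's. Where you genuinely diverge is in proving bijectivity. The paper builds an explicit inverse (``pathification''): given $f\in P^{\geq}$ it colours an edge gold when the face on its left has value $1$ and the face on its right has value $0$, shows the gold edges form a path from $s_\cP$ to $t_\cP$, and checks that coordinatization is a section of this injective map. You instead prove injectivity by recovering $p$ from the planar region $\overline{\dom_\cP/p}$, and surjectivity by induction on the number of atomic $2$-cells: a maximal element $\alpha$ of an upward-closed $S$ has no cell directly above it, hence (by \cref{lem:partitioning}) is a top cell, so \cref{cor:delete-inner-face} lets you delete it and invoke the inductive hypothesis on $\cP\backslash\alpha$. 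This is a legitimate alternative that trades the paper's explicit inverse formula for a reuse of the structural machinery of \S\ref{ssec:sub-pasting} and \cref{prop:top-face}; your identification of maximal elements of up-sets with top cells is a nice observation the paper does not make. Two small points deserve tightening. First, in checking $\Phi(p)\in P^{\geq}$, the assertion that $\gamma_i$ ``also lies in $\overline{\dom_\cP/p}$'' because it sits across $e$ from $\gamma_j$ fails when $e$ lies on the boundary path $p$ itself; you must rule this out, which follows because $e\in\dom_{\gamma_j}$ places $\gamma_j$ on the right of $e$, so $e\in p$ would put $\gamma_j$ \emph{below} $p$, contradicting $\Phi(p)_j=1$ (this is precisely the contradiction the paper runs). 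Second, ``recovered as the lower boundary'' is not yet a recipe where $p$ and $\dom_\cP$ share edges and the region degenerates to a curve; the precise statement is that $e\in p$ iff the face to its left (in the augmented scheme $\cP_+$) has value $1$ and the face to its right has value $0$ --- which is exactly the paper's gold colouring, so your injectivity step secretly contains the paper's pathification. Both gaps are easily filled and the overall argument is sound.
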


We give a few examples to explain the idea. In a pasting scheme $\cP$, the source path $\dom_\cP$ and target path $\cod_\cP$ from $s_\cP$ to $t_\cP$ bound a region in the plane, which contains the atomic 2-cells. The idea of this theorem is that any path from $s_\cP$ to $t_\cP$ in $\cP$ partitions this bounded region into the cells lying above that path and the cells lying below that path. The corresponding vertex in $\{0 \leq 1\}^{\{\gamma_1,\ldots, \gamma_n\}}$ is exactly the one that sets $\gamma_i = 1$ if $\gamma_i$ is above the path and $\gamma_i = 0$ if $\gamma_i$ is below the path. 

\begin{ex}
For example, the maximal hom-posets associated to the following pasting schemes are displayed below, with the $\alpha$ axis pointing left, the $\beta$ axis pointing out, and the $\gamma$ axis pointing down:
\setlength{\tabcolsep}{1pt}
\begin{center}
\begin{tabular}{cccc}
\begin{tikzcd}[column sep=2em]
\bullet \arrow[r, bend left] \arrow[r, bend right] \arrow[r, phantom, "\Downarrow\!\alpha"] & \bullet \arrow[r, bend left] \arrow[r, bend right]  \arrow[r, phantom, "\Downarrow\!\beta"]& \bullet \arrow[r, bend left] \arrow[r, bend right] \arrow[r, phantom, "\Downarrow\!\gamma"] & \bullet
\end{tikzcd} & 
\begin{tikzcd}[column sep=2em]
 \bullet  \arrow[r, "\displaystyle\Downarrow\!\alpha", "\displaystyle\Downarrow\!\beta"'] \arrow[r, bend left=60] \arrow[r, bend right=60]
 & \bullet \arrow[r, bend left] \arrow[r, bend right] \arrow[r, phantom, "\Downarrow\!\gamma"] & \bullet
\end{tikzcd} & 
\begin{tikzcd}[column sep=2em] \bullet \arrow[r, phantom, "\Downarrow\!\beta"] \arrow[rr, bend left=50, "\displaystyle\Downarrow\!\alpha"'] \arrow[r, bend right] \arrow[r, bend left] & \bullet \arrow[r, bend left] \arrow[r, bend right]\arrow[r, phantom, "\Downarrow\!\gamma"]& \bullet
\end{tikzcd} & 
\begin{tikzcd} 
 \bullet \arrow[r, bend left=25] \arrow[r, bend right=25]  \arrow[r, bend right=85, "\displaystyle\Downarrow\!\gamma"] \arrow[r, bend left=85, "\displaystyle\Downarrow\!\alpha"'] \arrow[r, phantom, "\Downarrow\!\beta"] & \bullet
 \end{tikzcd} \\
 \begin{tikzcd}[sep=small]
\bullet \arrow[dr] & & \bullet \arrow[dd] \arrow[ll]\arrow[dr] \\ & \bullet & & \bullet \arrow[ll, crossing over] \\ \bullet \arrow[from=uu] \arrow[dr]  & &\arrow[ll] \bullet \arrow[dr] \\ & \bullet \arrow[from=uu, crossing over] & & \bullet \arrow[ll] \arrow[from=uu]
\end{tikzcd} & 
\begin{tikzcd}[sep=small]
\bullet \arrow[dr] & & \bullet \arrow[dd] \arrow[ll] \\ & \bullet & &\\ \bullet \arrow[from=uu] \arrow[dr]  & &\arrow[ll] \bullet \\ & \bullet \arrow[from=uu, crossing over] & & 
\end{tikzcd} & 
\begin{tikzcd}[sep=small]
\bullet \arrow[dr] & & \arrow[ll] \bullet \\ & \bullet & &\\ \bullet \arrow[from=uu] \arrow[dr]  & & \\ & \bullet \arrow[from=uu, crossing over] & & 
\end{tikzcd} & 
\begin{tikzcd}[sep=small]
\bullet \arrow[dr] & & \arrow[ll] \bullet \\ & \bullet & &\\ {\phantom{\bullet}} & & {\phantom{\bullet}} \\ & \bullet \arrow[from=uu, crossing over] & & 
\end{tikzcd} 
\end{tabular}
\end{center}
\end{ex}

To prove this theorem, we use the following device, which allows us to ignore several special cases: if $\cP$ is a pasting scheme, then the \emph{augmentation} of $\cP$, denoted $\cP_+$, simply adds two new edges from $s_\cP$ to $t_\cP$, one of which lies above $\dom_\cP$ and the other of which lies below $\cod_\cP$.
This is again a pasting scheme, and the sub pasting scheme $\dom_\cP / \cod_\cP$ recovers $\cP$.
If $\{ \gamma_1, \dots, \gamma_n \}$ is the set of atomic 2-cells of $\cP$, we let $\gamma_0$ and $\gamma_{n+1}$ be the new 2-cells of $\cP_+$, with $\gamma_{n+1}$ to the right of the edge $\dom_{\cP_+}$ and $\gamma_0$ to the left of the edge $\cod_{\cP_+}$.

A point in $P^{\geq} \subset \{0\leq 1\}^{\{ \gamma_1, \dots, \gamma_n \}}$ can be identified with a function \[f\colon \{ \gamma_1, \dots, \gamma_n \} \to \{0\leq 1\}\] so that $f(\gamma_i) \geq f(\gamma_j)$ whenever $\gamma_i$ lies directly above $\gamma_j$. When convenient, we extend such functions to define corresponding functions \[f_+ \colon \{ \gamma_0, \gamma_1, \dots, \gamma_n, \gamma_{n+1} \} \to \{0 \leq 1\}\]  where we additionally insist that $f_+(\gamma_{n+1}) = 1$ and $f_+(\gamma_0) = 0$. Restriction of such an $f_+$ gives an isomorphism between the poset of such functions on the atomic 2-cells of $\cP_+$ and the poset $P^{\geq}$.

We prove \cref{thm:pasting-poset} by constructing explicit inverse isomorphisms that identify $\mF \cP(s_\cP,t_\cP)$ with $P^{\geq} \subset \{0 \leq 1\}^{\{\gamma_1,\ldots, \gamma_n\}}$. The easier to define is the \textbf{coordinatization} function, which turns a path $p$ from $s_\cP$ to $t_\cP$ in $\cP$ into a function $f \in P^{\geq}$, which we define by specifying its augmentation $f_+$. Within $\cP_+$, the path $p$ intersects the special edge $\dom_{\cP_+}$ only at endpoints, and we write $C_1$ for the bounded region of $\CC \backslash (p \cup \dom_{\cP_+})$.
Likewise we write $C_0$ for the bounded region of $\CC \backslash (p \cup \cod_{\cP_+})$.
If $\gamma$ is an atomic 2-cell of $\cP_+$, we say $f_+(\gamma) = i$ just when $\gamma$ is contained in the region $C_i$.

\begin{lem}
The coordinatization defines a function $\mF \cP(s_\cP,t_\cP) \to P^{\geq}$.
\end{lem}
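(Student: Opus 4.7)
The plan is to verify three things in sequence: (i) $f_+(\gamma)$ is well-defined on every atomic $2$-cell of $\cP_+$, i.e., each such cell lies in exactly one of $C_0, C_1$; (ii) $f_+(\gamma_{n+1}) = 1$ and $f_+(\gamma_0) = 0$, so that the restriction $f$ has the required form; and (iii) whenever $\gamma_i$ lies directly above $\gamma_j$ in $\cP$, we have $f_+(\gamma_i) \geq f_+(\gamma_j)$.

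For (i), the crucial topological input is anchorability: by the internal disjointness condition of \cref{defn:interior-anchorable}, the boundary of any interior face $\gamma$ of $\cP_+$ is a simple closed curve, so its interior is a Jordan domain containing no vertices or edges of $\cP_+$. In particular this interior is disjoint from $p$ and contained in the bounded region enclosed by $\dom_{\cP_+} \cup \cod_{\cP_+}$. Since the latter region is exactly $\overline{C_0} \cup \overline{C_1}$ with $C_0$ and $C_1$ separated by $p$, the connected interior of $\gamma$ lies in a unique one of $C_0, C_1$. For (ii), \cref{lem:maximal-path-x-to-y} applied with $x = s_\cP, y = t_\cP$ yields a unique topmost path from $s_\cP$ to $t_\cP$ in $\cP$, which must be $\dom_\cP$ since no path in $\cP$ can lie strictly above an exterior boundary path. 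Hence $p$ lies weakly below $\dom_\cP$. Since $\gamma_{n+1}$ is the face enclosed between $\dom_{\cP_+}$ and $\dom_\cP$, its interior sits above $\dom_\cP$ and hence above $p$, placing it in $C_1$. Symmetrically $\gamma_0 \subset C_0$.

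For (iii), let $e \in \cod_{\gamma_i} \cap \dom_{\gamma_j}$ be the witnessing edge. Under our orientation conventions, every $2$-cell lies just below each edge of its domain path and just above each edge of its codomain path, so $\gamma_i$ is adjacent to $e$ from above while $\gamma_j$ is adjacent to $e$ from below. If $e$ lies on $p$, a small neighborhood of the interior of $e$ is split by $p$ into an upper half contained in $C_1$ and a lower half contained in $C_0$, so $\gamma_i \subset C_1$ and $\gamma_j \subset C_0$, giving $f_+(\gamma_i) = 1 > 0 = f_+(\gamma_j)$. If $e$ does not lie on $p$, then the open edge avoids $p$ and so lies entirely in one of $C_0, C_1$; both adjacent faces $\gamma_i$ and $\gamma_j$ then meet points of that same region, forcing $f_+(\gamma_i) = f_+(\gamma_j)$. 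In either case $f_+(\gamma_i) \geq f_+(\gamma_j)$.

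The main obstacle is step (i): marshalling anchorability (and in particular the internal disjointness built into \cref{defn:interior-anchorable}) to conclude that each interior face is a genuine Jordan domain whose interior sits in a single component of $\CC \setminus p$. Steps (ii) and (iii) are then essentially planar-connectedness checks, provided the orientation conventions identifying the ``above/below'' sides of an edge are tracked consistently.
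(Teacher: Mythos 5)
Your proof is correct and takes essentially the same route as the paper's: the heart of both arguments is that an edge witnessing ``directly above'' either lies on $p$ --- in which case the left/right orientation convention places the codomain-side face in $C_1$ and the domain-side face in $C_0$ --- or misses $p$, in which case both adjacent faces land in the same region. Your steps (i) and (ii) merely make explicit the well-definedness checks (each face lies in exactly one $C_i$, and the augmented cells $\gamma_{n+1},\gamma_0$ get the prescribed values) that the paper leaves implicit.
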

\begin{proof}
We must show that the coordinatization $f$ of any path $p$ from $s_\cP$ to $t_\cP$ in $\cP$ satisfies $f(\gamma_i) \geq f(\gamma_j)$ whenever $\gamma_i$ lies directly above $\gamma_j$, in other words that we cannot have $\gamma_i$ in $C_0$ but $\gamma_j$ in $C_1$. In this case, the intersection of their boundaries lies then in the intersection of the closures of $C_0$ and $C_1$, which is exactly $p$. Thus, any edge $e$ witnessing that $\gamma_i$ lies directly above $\gamma_j$ needs to belong to $p$. In particular, $\gamma_i$ is to  the left of $e$ and $\gamma_j$ is to the right, which means that $\gamma_i$ lies in $C_1$ and $\gamma_j$ lies in $C_0$, a contradiction.
\end{proof}

The inverse mapping, called \textbf{pathification}, requires more work:

\begin{defn} 
Each function $f\colon \{ \gamma_1, \dots, \gamma_n \} \to \{0\leq 1\}$ with $f(\gamma_i) \geq f(\gamma_j)$ when $\gamma_i$ lies directly above $\gamma_j$ defines a coloring on the set of edges of $\cP$.
\begin{itemize}
\item Given an edge $e\in \cP$, write $\ell(e) \in \{ \gamma_0, \gamma_1, \dots, \gamma_n, \gamma_{n+1} \}$ for the 2-cell of $\cP_+$ to the left of $e$, and $r(e)$ for the 2-cell of $\cP_+$ to the right of $e$.
\item An edge $e$ of $\cP$ is colored \textbf{silver} if $f_+(\ell(e)) = f_+(r(e))$.
\item An edge $e$ of $\cP$ is colored \textbf{gold} if $f_+(\ell(e)) > f_+(r(e))$.
\end{itemize}
Note that by the restriction on $f$, all edges of $\cP$ are colored either silver or gold; we are not coloring $\dom_{\cP_+}$ or $\cod_{\cP_+}$.
\end{defn}

Using the ordering from \cref{cvn:inp-out-order}, 
we name the inner faces of $\cP_+$ adjacent to a vertex $v$ of $\cP$ as follows:
If $\inp_\cP(v) = \{ e_1' \leq \dots \leq e_m' \}$, we write $\lambda_{i-1}^v = r(e_i')$ and $\lambda_i^v = \ell(e_i')$. 
If $\out_\cP(v) = \{ e_1 \leq \dots \leq e_k \}$, we write $\rho_{j-1}^v = r(e_j)$ and $\rho_j^v = \ell(e_j)$.
These are consistent labelings. 
As an illustration, if $v \notin \{s_\cP, t_\cP\}$ then we have a picture like the following:
\[ \begin{tikzcd}[row sep=1.5em]
\bullet \ar[ddrr, bend left=40, "e_5'" description] & \arrow[dl, phantom, "\scriptstyle\Downarrow\lambda_4"]
& \arrow[dr, phantom, "\scriptstyle\Downarrow\lambda_5=\rho_3"]
\\
\bullet \arrow[drr, bend left=15, "e_4'" description] &  \arrow[dl, phantom, "\scriptstyle\Downarrow\lambda_3"] & & ~ &\bullet \\
\bullet \arrow[rr, "e_3'" description] & \arrow[dl, phantom, "\scriptstyle\Downarrow\lambda_2"] & v \ar[rr, "e_2" description] \ar[urr, bend left=15, "e_3" description] \ar[drr, bend right=15, "e_1" description] & \arrow[ur, phantom, "\scriptstyle\Downarrow\rho_2"] & \bullet  \\
\bullet  \arrow[rru, bend right=15, "e_2'" description] & \arrow[dl, phantom, "\scriptstyle\Downarrow\lambda_1"] & &  \arrow[ur, phantom, "\scriptstyle\Downarrow\rho_1"] ~ & \bullet \\
\bullet \ar[uurr, bend right=40, "e_1'" description] & & \arrow[ur, phantom, "\scriptstyle\Downarrow\lambda_0=\rho_0"]
\end{tikzcd} 
\]
The picture is similar for $s_\cP$ except there are no $\lambda_i$, and likewise $t_\cP$ has no $\rho_j$.

\begin{lem}
At most one input to a vertex is gold. At most one output of a vertex is gold.
\end{lem}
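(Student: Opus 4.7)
The plan is to show that around any vertex $v$ the sequences of faces $\lambda_0^v, \ldots, \lambda_m^v$ and $\rho_0^v, \ldots, \rho_k^v$ of $\cP_+$ are totally ordered chains under the ``lies directly above'' relation of \cref{lies directly above}, forcing the $\{0 \leq 1\}$-valued sequences $f_+(\lambda_i^v)$ and $f_+(\rho_j^v)$ to be monotone non-decreasing. Since such a sequence admits at most one strict jump from $0$ to $1$, this yields the claim.

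The first step is the combinatorial identification. By definition $\lambda_i^v = \ell(e_i')$ and $\lambda_{i-1}^v = r(e_i')$. Under the orientation convention of \cref{defn:interior-anchorable}---edges incident to an interior face on their right form its domain path while those incident on their left form its codomain path---this reads $e_i' \in \cod_{\lambda_i^v} \cap \dom_{\lambda_{i-1}^v}$, which by \cref{lies directly above} says exactly that $\lambda_i^v$ lies directly above $\lambda_{i-1}^v$. The identical bookkeeping applied to an output edge $e_j$ shows $\rho_j^v$ lies directly above $\rho_{j-1}^v$.

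Next one must confirm that the extension $f_+$ still respects the lies-directly-above relation across all of $\cP_+$, not merely on the $\cP$-cells where this was assumed. The augmenting face $\gamma_{n+1}$ can lie directly above interior cells (the top cells of $\cP$) but never directly below any face, because the only edge in $\dom_{\gamma_{n+1}}$ is the new edge $\dom_{\cP_+}$, which does not appear in the codomain of any other face of $\cP_+$; the constraint is then automatic because $f_+(\gamma_{n+1}) = 1$ is maximal. The situation for $\gamma_0$ is dual with $f_+(\gamma_0) = 0$ minimal. Combining these observations yields the monotone sequences $f_+(\lambda_0^v) \leq \cdots \leq f_+(\lambda_m^v)$ and $f_+(\rho_0^v) \leq \cdots \leq f_+(\rho_k^v)$ in $\{0 \leq 1\}$. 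The input edge $e_i'$ is gold precisely when $f_+(\lambda_i^v) > f_+(\lambda_{i-1}^v)$, which can hold at most at the unique index where this sequence transitions from $0$ to $1$; the output statement follows by the same argument on the $\rho$-sequence. No serious obstacle arises---the proof is an unpacking of \cref{cvn:inp-out-order} together with the orientation convention for anchorable faces.
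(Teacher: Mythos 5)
Your proof is correct and takes essentially the same approach as the paper's: the paper's proof simply asserts the chain of inequalities $1\geq f_+(\lambda_m)\geq\cdots\geq f_+(\lambda_0)\geq 0$ and concludes there is at most one strict jump. You additionally justify that monotonicity by identifying $\lambda_i^v$ as lying directly above $\lambda_{i-1}^v$ and checking that $f_+$ respects the relation on the augmented cells $\gamma_0,\gamma_{n+1}$, which is a welcome unpacking of a step the paper leaves implicit.
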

\begin{proof}
Suppose $\inp_\cP(v) = \{ e_1' \leq \dots \leq e_m' \}$ with $m > 1$. We have $1\geq f_+(\lambda_m) \geq f_+(\lambda_{m-1}) \geq \cdots \geq f_+(\lambda_0) \geq 0$, so there is at most one index $i$ with $f_+(\ell(e_i')) = f_+(\lambda_i) > f_+(\lambda_{i-1}) = f_+(r(e_i'))$.
\end{proof}

\begin{lem}
The vertex $s_\cP$ has a gold outgoing edge and the vertex $t_\cP$ has a gold incoming edge.
If $v\notin \{s_\cP, t_\cP\}$, then $v$ has a gold incoming edge if and only if it has a gold outgoing edge.
\end{lem}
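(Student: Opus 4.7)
The plan is to exploit monotonicity of $f_+$ along chains of faces stacked around a vertex, then handle the three cases separately using the convention on orderings at $s_\cP, t_\cP$ and the prohibited local configuration (\cref{lem prohibited local configuration}) at other vertices.

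First I would establish the key monotonicity: for $v$ with $\inp_\cP(v) = \{e_1' \leq \cdots \leq e_m'\}$, each $\lambda_i$ lies directly above $\lambda_{i-1}$ in the sense of \cref{lies directly above}. Indeed, the shared edge $e_i'$ has $\lambda_i = \ell(e_i')$ lying above it and $\lambda_{i-1} = r(e_i')$ lying below, so $e_i' \in \cod_{\lambda_i} \cap \dom_{\lambda_{i-1}}$. By the hypothesis on $f$, this forces $f_+(\lambda_0) \leq f_+(\lambda_1) \leq \cdots \leq f_+(\lambda_m)$. The same reasoning gives $f_+(\rho_0) \leq \cdots \leq f_+(\rho_k)$ on the output side. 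Because values lie in $\{0,1\}$, the existence of a gold incoming edge at $v$ (an index $i$ with $f_+(\lambda_i) > f_+(\lambda_{i-1})$) is equivalent to $f_+(\lambda_0) = 0$ and $f_+(\lambda_m) = 1$, and analogously for a gold outgoing edge using the $\rho$-chain.

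Next I would handle $s_\cP$: by \cref{cvn:inp-out-order} we may assume $e_1 \in \cod_\cP$ is the bottom-most outgoing edge and $e_k \in \dom_\cP$ is the top-most. In $\cP_+$, the face to the right of $e_1$ is the newly adjoined face $\gamma_0$ sitting below $\cod_\cP$, while the face to the left of $e_k$ is $\gamma_{n+1}$ sitting above $\dom_\cP$. Thus $\rho_0 = \gamma_0$ and $\rho_k = \gamma_{n+1}$, giving $f_+(\rho_0) = 0 < 1 = f_+(\rho_k)$ and hence a gold outgoing edge. The statement for $t_\cP$ is entirely symmetric with the $\lambda$-chain.

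Finally, for $v \notin \{s_\cP, t_\cP\}$, I would use \cref{lem prohibited local configuration} to observe that the incoming and outgoing edges form two contiguous arcs in the cyclic order around $v$, so the face immediately below $v$ is at once $r(e_1') = \lambda_0$ and $r(e_1) = \rho_0$, and similarly the face immediately above satisfies $\lambda_m = \rho_k$. Therefore $f_+(\lambda_0) = f_+(\rho_0)$ and $f_+(\lambda_m) = f_+(\rho_k)$, and the two equivalent criteria for the existence of gold incoming and outgoing edges coincide. The main subtlety will be the bookkeeping around boundary vertices $v$ lying on $\dom_\cP$ or $\cod_\cP$, where either $\lambda_0, \rho_0$ or $\lambda_m, \rho_k$ may equal $\gamma_0$ or $\gamma_{n+1}$; but the arc-contiguity argument still yields $\lambda_0 = \rho_0$ and $\lambda_m = \rho_k$ as faces of $\cP_+$, so the conclusion is unaffected.
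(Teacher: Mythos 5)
Your proposal is correct and follows essentially the same route as the paper: characterize the existence of a gold incoming (resp.\ outgoing) edge via the monotone chain $f_+(\lambda_0)\leq\cdots\leq f_+(\lambda_m)$ (resp.\ the $\rho$-chain), then identify the extremal faces as $\gamma_0,\gamma_{n+1}$ at $s_\cP$ and $t_\cP$ and note $\lambda_0=\rho_0$, $\lambda_m=\rho_k$ at interior-of-the-order vertices. You simply make explicit two points the paper leaves implicit, namely that the monotonicity comes from $\ell(e)$ lying directly above $r(e)$ and that the contiguity of incoming and outgoing arcs is \cref{lem prohibited local configuration}.
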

\begin{proof}
If $v$ is an arbitrary vertex with $m$ incoming edges in $\cP$ and $k$ outgoing edges in $\cP$, then $v$ has an incoming gold edge if and only if $f_+(\lambda_m^v) = 1$ and $f_+(\lambda_0^v) = 0$, and has an outgoing gold edge if and only if $f_+(\rho_k^v) = 1$ and $f_+(\rho_0^v) = 0$.

If $v \notin \{ s_\cP, t_\cP\}$, then $\lambda_m^v = \rho_k^v$ and $\lambda_0^v = \rho_0^v$, so the second result follows.
If $v = s_\cP$, then $\rho_0^v = \gamma_0$ and $\rho_k^v = \gamma_{n+1}$, but our conditions on $f_+$ insist that $f_+(\gamma_0) = 0$ and $f_+(\gamma_{n+1})=1$. The $v = t_\cP$ case is similar.
\end{proof}

As $\cP$ has no directed cycles, the previous two lemmas give the following:

\begin{cor}
If $f \in P^{\geq}$, then the gold edges associated to $f$ form a path in $\cP$ from $s_\cP$ to $t_\cP$. \qed
\end{cor}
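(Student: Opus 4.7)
The plan is to deduce the statement from the two preceding lemmas together with the acyclicity of pasting schemes (\cref{lem:acyclicity}). First I would observe that the first lemma implies the subgraph spanned by the gold edges has in-degree and out-degree at most one at every vertex; consequently each of its connected components is either a directed path or a directed cycle. Directed cycles are forbidden by \cref{lem:acyclicity}, so every component is a directed path.

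Next I would pin down the endpoints of these paths. The second lemma states that any interior vertex $v\notin\{s_\cP,t_\cP\}$ has a gold incoming edge if and only if it has a gold outgoing edge, so such a $v$ can be neither the initial nor the terminal vertex of a maximal gold path. Hence every maximal gold path must begin at $s_\cP$ and end at $t_\cP$.

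Finally, uniqueness follows from the first lemma: $s_\cP$ admits at most one gold outgoing edge, so at most one maximal gold path can emanate from it, and the second lemma guarantees that exactly one does. Concretely, the path can be constructed by starting at $s_\cP$, following its unique gold outgoing edge to a new vertex $v_1$, and iterating: upon arrival at any vertex $v_i$ via a gold edge, if $v_i \neq t_\cP$ the second lemma provides a gold outgoing edge, which the first lemma shows is unique; by finiteness and acyclicity this process terminates, and it can terminate only at $t_\cP$. There is no serious obstacle here, as the combinatorial content is already packaged into the two lemmas; the only mild subtlety is confirming that every gold edge is accounted for by this path, which is handled by the degree bounds in the first step together with the endpoint analysis.
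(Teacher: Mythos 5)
Your argument is correct and is exactly the one the paper intends: the paper gives no written proof beyond the remark that the corollary follows from the two preceding lemmas together with acyclicity, and your write-up simply fills in those details (degree bounds force the gold subgraph to be a disjoint union of directed paths, acyclicity rules out cycles, and the endpoint analysis pins down a unique path from $s_\cP$ to $t_\cP$). No gaps.
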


\begin{lem}\label{lem pathification injective} The pathification function is injective.
\end{lem}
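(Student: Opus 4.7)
My proof plan is to show that the pathification function admits the coordinatization function as a left inverse, from which injectivity follows immediately. So the goal is: given $f \in P^{\geq}$ with associated gold path $p$, I want to recover $f_+$ from the geometry of $p$ by showing that $f_+(\gamma) = 1$ exactly when the atomic 2-cell $\gamma$ lies in the region $C_1$ bounded by $p$ and $\dom_{\cP_+}$, and $f_+(\gamma) = 0$ exactly when $\gamma$ lies in the region $C_0$ bounded by $p$ and $\cod_{\cP_+}$.

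The key observation is that crossing a silver edge does not change the value of $f_+$: by definition an edge $e$ of $\cP$ is silver precisely when $f_+(\ell(e)) = f_+(r(e))$. So $f_+$ is constant on each connected component of the dual graph obtained by joining adjacent faces of $\cP_+$ across silver edges. Since the gold edges are exactly the edges on the path $p$, and silver edges are exactly those not on $p$, two faces of $\cP_+$ lie in the same silver-connectivity class if and only if they lie in the same component of the plane after removing $p \cup \dom_{\cP_+} \cup \cod_{\cP_+}$.

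The plan is then to argue, using the planarity of $\cP_+$ and the fact that $p$ is a simple directed path from $s_\cP$ to $t_\cP$, that the plane region bounded by $\dom_{\cP_+}$ and $\cod_{\cP_+}$ is separated by $p$ into exactly two components, $C_1$ (containing $\gamma_{n+1}$) and $C_0$ (containing $\gamma_0$). Therefore every face $\gamma$ of $\cP_+$ is silver-connected either to $\gamma_{n+1}$ or to $\gamma_0$, and since $f_+(\gamma_{n+1}) = 1$ and $f_+(\gamma_0) = 0$ by definition of the augmented function, the value $f_+(\gamma)$ is determined by which region contains $\gamma$.

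The main obstacle I expect is being careful about the planar topology, specifically justifying that one can always build a path in the dual graph connecting $\gamma$ to $\gamma_{n+1}$ or $\gamma_0$ using only silver edges (i.e., avoiding edges of $p$). This follows from a standard planar separation argument: since $p$ is a simple path embedded in the plane with endpoints on the boundary of the region bounded by $\dom_{\cP_+} \cup \cod_{\cP_+}$, it splits that bounded region into exactly two connected pieces. Once this topological fact is in hand, every face sitting in one of these two pieces is reachable from $\gamma_{n+1}$ or $\gamma_0$ via a sequence of adjacent face-crossings entirely inside that piece, each crossing an edge not on $p$ and hence silver. This yields $f_+(\gamma) \in \{0,1\}$ as dictated by the region of $\gamma$, so pathification composed with coordinatization is the identity on $P^{\geq}$, and in particular pathification is injective.
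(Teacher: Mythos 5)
Your argument is correct, but it takes a genuinely different route from the paper's. You prove the stronger statement that coordinatization is a retraction (left inverse) of pathification, via a planar separation argument: the gold path $p$ is a simple arc meeting $\dom_{\cP_+}\cup\cod_{\cP_+}$ only at $s_\cP$ and $t_\cP$, so it splits the region between them into exactly the two components $C_1$ and $C_0$; every face in a component is reachable from $\gamma_{n+1}$ (resp.\ $\gamma_0$) by crossing only edges interior to that component, all of which are silver; and $f_+$ is constant across silver edges, so $f_+(\gamma)$ is determined by which component contains $\gamma$ --- which is precisely the coordinatization of $p$. The paper instead argues purely combinatorially: given distinct $f,g\in P^{\geq}$, it chooses a $2$-cell $\gamma_i$ on which they differ that is minimal in the ``lies above'' partial order of \cref{transitive lies above}, and checks that any edge $e$ of $\cod_{\gamma_i}$ is then gold for one of $f,g$ and silver for the other, so the gold paths differ. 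Your approach buys more: combined with the paper's observation in the proof of \cref{thm:pasting-poset} that coordinatization is a section of pathification, your identity $c\circ\pi=\mathrm{id}$ immediately gives that the two maps are mutually inverse bijections. The cost is that you lean on Jordan-curve-style facts about the embedding (simplicity of $p$, which follows from acyclicity, and the realizability of dual-graph adjacency chains by arcs inside an open component avoiding the finitely many vertices); these are true and in the spirit of the paper's other topological arguments, but they should be stated explicitly. The paper's argument avoids this topology entirely at the price of yielding only injectivity.
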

\begin{proof}
Suppose $f$ and $g$ are distinct elements in $P^{\geq}$. In that case we may find an atomic 2-cell  $\gamma_i$ of $\cP$, with $f(\gamma_i) \neq g(\gamma_i)$. Since there are only finitely many atomic 2-cells, we may choose $\gamma_i$ to be a minimal element in the ``lies above'' partial order from \cref{transitive lies above} among those with this property, so that in particular any cell directly below $\gamma_i$ does not have this property.

Without loss of generality suppose $f(\gamma_i) = 0$ and $g(\gamma_i) = 1$.
Let $e$ be an edge of $\cod_{\gamma_i}$, and let $\gamma_j$ be the atomic 2-cell of $\cP_+$ lying to the right of $e$, which is directly below $\gamma_i$.
Then $f_+(\gamma_j) = 0 = g_+(\gamma_j)$, 
so $e$ is in the pathification of $g$ but not in the pathification of $f$.
\end{proof}

\begin{proof}[Proof of \cref{thm:pasting-poset}]
We will show that coordinatization and pathification are inverses, and also that they each preserve the partial orders (where the partial order on $\mF \cP(s_\cP, t_\cP)$ is from \cref{cor:hom-poset}).

If $p$ is a path from $s_\cP$ to $t_\cP$, we have that an edge $e$ of $\cP$ is either in $C_0$, $C_1$ or $\overline{C_0} \cap \overline{C_1} = p$, so the coordinatization of $p$ lies in $P^{\geq}$.
It is immediate that if $f$ is the coordinatization of a path $p$, then every edge in $p$ is gold, so coordinatization is a section of pathification. Since pathification is injective by \cref{lem pathification injective}, these functions are inverses.


Thus, the pathification and coordinatization functions define a bijection between the vertices of the posets $\mF \cP(x,y)$ and $P^{\geq}$. To see that these define an isomorphism of posets, we must argue that both constructions respect the partial orders, of a path $p$ being above a path $q$ in the case of $\mF \cP(x,y)$ and of a function $f$ being coordinatewise less than or equal to a function $g$ in $P^{\geq}$.

To see that coordinatization respects the ordering recall, by \cref{cor:hom-poset}, there is an arrow from $p$ to $q$ if and only if the path $p$ lies above the path $q$. In this case the coordinatization of $p$ clearly is less than or equal to that of $q$, for these coordinatizations will only differ on the coordinates corresponding to atomic 2-cells between $p$ and $q$, and the coordinatization of $p$ will assign these the value 0 while the coordinatization of $q$ will assign them the value $1$. 

Finally, we show that the pathification is order-preserving. 
If we have $f$ and $f'$ with $f \leq f'$ then the gold path of $f$ is above the gold path of $f'$.
This is true locally at a vertex $v$ which is in the intersection of two gold paths, since the 2-cells sent by $f$ to 1 are a subset of the 2-cells sent by $f'$ to 1.
But ``above'' for paths was a local condition at the vertices in their intersection.
\end{proof}

Combining \cref{thm:pasting-poset} with  \cref{cor:sub-pasting-scheme} we see that all hom-posets $\mF \cP(x,y)$ in the free 2-category generated by a pasting scheme are full sub-posets of hypercubes. Indeed, such hom-posets are necessarily sublattices:

\begin{lem}\label{lem:hom-lattice} For any pasting scheme $\cP$ and vertices $x,y \in \cP$, the hom-poset $\mF \cP(x,y)$ is a sublattice of a $\{0 \leq 1\}^k$, with finite meets and joins, where $k$ is the number of atomic $2$-cells of $\cP$ whose source and target vertices lie between $x$ and $y$.
\end{lem}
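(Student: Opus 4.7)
The plan is to reduce to the case of the maximal hom-poset via \cref{cor:sub-pasting-scheme}, apply the explicit description from \cref{thm:pasting-poset}, and then check that the defining conditions of the subposet $P^{\geq} \subseteq \{0 \leq 1\}^k$ are preserved under coordinatewise $\min$ and $\max$.

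First I would invoke \cref{cor:sub-pasting-scheme} to replace the pair $(x,y)$ in $\cP$ by the sub pasting scheme $\cP_{x,y}$, so that
\[ \mF\cP(x,y) \cong \mF\cP_{x,y}(s_{\cP_{x,y}}, t_{\cP_{x,y}}). \]
Since $\cP_{x,y}$ is full on atomic $2$-cells whose source and target lie between $x$ and $y$, its set of atomic $2$-cells has exactly $k$ elements, matching the claimed dimension.

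Next I would apply \cref{thm:pasting-poset} to identify $\mF\cP_{x,y}(s_{\cP_{x,y}}, t_{\cP_{x,y}})$ with the full subposet $P^{\geq} \subseteq \{0 \leq 1\}^k$ consisting of those functions $f \colon \{\gamma_1,\dots,\gamma_k\} \to \{0 \leq 1\}$ satisfying $f(\gamma_i) \geq f(\gamma_j)$ whenever $\gamma_i$ lies directly above $\gamma_j$. Since the ambient cube $\{0 \leq 1\}^k$ is itself a lattice under coordinatewise $\min$ and $\max$, it suffices to check that $P^{\geq}$ is closed under these operations.

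For the main check: given $f, g \in P^{\geq}$ and a pair $\gamma_i, \gamma_j$ with $\gamma_i$ lying directly above $\gamma_j$, we have $f(\gamma_i) \geq f(\gamma_j)$ and $g(\gamma_i) \geq g(\gamma_j)$, hence
\[ \min\bigl(f(\gamma_i),g(\gamma_i)\bigr) \geq \min\bigl(f(\gamma_j),g(\gamma_j)\bigr), \qquad \max\bigl(f(\gamma_i),g(\gamma_i)\bigr) \geq \max\bigl(f(\gamma_j),g(\gamma_j)\bigr). \]
Thus both $f \wedge g$ and $f \vee g$ lie in $P^{\geq}$, showing $P^{\geq}$ is a sublattice of $\{0 \leq 1\}^k$. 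Finite meets and joins exist in any finite lattice, completing the proof. No step here is a serious obstacle; the essential work has been done in \cref{thm:pasting-poset}, and the lattice property is an immediate consequence of the coordinatewise monotonicity of $\min$ and $\max$.
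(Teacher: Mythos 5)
Your proposal is correct and follows essentially the same route as the paper: reduce to the coordinatization of \cref{thm:pasting-poset} (with \cref{cor:sub-pasting-scheme} handling general $x,y$) and verify that the full subposet cut out by the relations $\gamma_i \geq \gamma_j$ is closed under coordinatewise meet and join. Your closure check via the monotonicity of $\min$ and $\max$ is in fact a little cleaner than the paper's case analysis on agreeing versus disagreeing coordinates.
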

\begin{proof}
By \cref{thm:pasting-poset} we may identify the vertices of $\mF \cP(x,y)$ with their coordinatizations. As the inclusion $\mF \cP(x,y) \hookrightarrow \{0 \leq 1\}^n$ is full, it suffices to prove that the meet and join of any pair of points in the subposet also lie in the subposet. Suppose $f$ and $g$ are two such coordinatizations. This means that $f, g \in \{0 \leq 1\}^n$ are sequences of vertices that obey whatever relations the coordinatewise relations imposed by the atomic 2-cells of $\cP$. Our task is to show that $f\wedge g$ and $f \vee g \in \{0 \leq 1\}^n$ also satisfy the restrictions that define the subposet. But note that the meet and join share all of the common coordinates of $f$ and $g$, but then have all 0s, in the case of the meet, or all 1s, in the case of the join, in the coordinates where they disagree. If it is required that the $i$th coordinate is less than or equal to the $j$th coordinate and both coordinates are among the vertices that have been changed, there is no problem, since in both the meet and the join the values at these coordinates are equal. But otherwise the configuration in question appears in either $f$ or $g$, and since these vertices are coordinatizations, the condition must be  allowed in the subposet.
\end{proof}

\begin{rmk} It is natural to wonder whether there is a characterization of the sublattices of hypercubes that arise as hom-categories for some pasting scheme. There is one obvious further relation on the defining coordinate relations: by \cref{transitive lies above} we cannot have both $\alpha \geq \beta$ and $\beta \geq \alpha$ and, more generally, we cannot have any cycles $\gamma_1 \geq \gamma_2$, \ldots, $\gamma_{k-1}\geq \gamma_k$, $\gamma_k \geq \gamma_1$. But we imagine there may well be other restrictions imposed by the  impossibility of embedding certain configurations of atomic 2-cells into the plane. We leave this as an open question for future study.
\end{rmk}

\section{Dwyer maps and their nerves}\label{sec:dwyer}

In a sense that will be made precise in \S\ref{sec:uniqueness}, the difference between a pasting diagram in an $(\infty,2)$-category and the homotopy coherent diagram it generates boils down to the distinction between a pushout of nerves of categories and the nerve of the category defined by the pushout. Our aim in this section is to prove Corollary \ref{cor:hom-dwyer-map}, which will be applied in a crucial way in the inductive step in the proof of our main theorem.

We deduce this result as a special case of a theorem of independent interest that is tailored to exactly this sort of situation. Its statement concerns a class of functors first considered by Thomason under the name ``Dwyer maps,'' which are introduced in \S\ref{ssec:dwyer}, where we review the literature and observe that various functors related to the hom-categories of pasting schemes define Dwyer maps.

In \S\ref{ssec:dwyer-pushout}, we state Theorem \ref{AnodyneDwyer}, which is proven in a companion paper \cite{HORR-Dwyer}. This result demonstrates that the canonical comparison between the pushout of nerves of categories and the nerve of the pushout is a weak categorical equivalence, provided that one of the functors in the span is a Dwyer map. 
In fact, when the other functor in the span is injective on objects and faithful, as it is in our case of interest, the comparison map is inner anodyne.
Corollary \ref{cor:hom-dwyer-map} follows immediately.

\subsection{Dwyer maps}\label{ssec:dwyer}

Thomason refers to certain full inclusions of 1-categories as Dwyer maps \cite[Definition 4.1]{ThomasonModelCat}. These feature in a central way in the construction of the Thomason model structure on categories.

\begin{defn}[Thomason]\label{defn:Dwyer-map}
A full sub-$1$-category inclusion $I \colon \cA \hookrightarrow \cB$ is \textbf{Dwyer map}
if the following conditions hold.
\begin{enumerate}[label=(\roman*)]
\item The category $\cA$ is a \emph{sieve} in $\cB$, meaning there is a necessarily unique functor $\chi \colon \cB \to \cattwo$ with $\chi^{-1}(0) = \cA$. We write $\cV:=\chi^{-1}(1)$ for the complementary \emph{cosieve} of $\cA$ in $\cB$. 
\item The inclusion $I \colon \cA \hookrightarrow\cW$ into the \emph{minimal cosieve}\footnote{Explicitly $\cW$ is the full subcategory of $\cB$ containing every object that arises as the codomain of an arrow with domain in $\cA$.} $\cW \subset \cB$ containing $\cA$ admits a right adjoint left inverse $R\colon \cW\to \cA$, a right adjoint for which the unit is an identity.
\end{enumerate}
\end{defn}

Schwede describes Dwyer maps as ``categorical analogs of the inclusion of a neighborhood deformation retract'' \cite{SchwedeOrbispaces}. In fact all of the examples of Dwyer maps considered in this paper are more like deformation retracts, in that the cosieve $\cW$ generated by $\cA$ is the full codomain category $\cB$.

\begin{ex}
\label{basicDwyer}
The vertex inclusion $0 \colon \catone \to \cattwo$ is a Dwyer map, with $! \colon \cattwo \to \catone$ the right adjoint left inverse.
The other vertex inclusion $1 \colon \catone \to \cattwo$ is not a Dwyer map.
\end{ex}

Generalizing the previous example:

\begin{ex}\label{ex:new-terminal-Dwyer} If $\cA$ is a category with a terminal object and  $\cA^{\triangleright}$ is the category which formally adds a new terminal object, then the inclusion $\cA \hookrightarrow \cA^{\triangleright}$ is a  Dwyer map.\footnote{If $\cA$ does not have a terminal object, then $\cA \to \cA^{\triangleright}$ need not be a Dwyer map. Indeed, if $\cA=\catone\amalg\catone$, the only cosieve containing $\cA$ is $\cA^\triangleright$ itself, and there cannot be a right adjoint $\cA^\triangleright \to \cA$ as $\cA$ does not have a terminal object.}
\end{ex}

We warn the reader that we are using the original notion of Dwyer map, not the pseudo-Dwyer maps introduced by Cisinski \cite{CisinskiDwyer}, which are retracts of Dwyer maps. In particular, our Dwyer maps are not closed under retracts. They do, however, enjoy the following closure properties, as we now recall:

\begin{lem}[{\cite[Proposition 1.1]{SchwedeOrbispaces}}]\label{lem:dwyer_product} Any product $\cC \times I \colon \cC \times \cA \hookrightarrow \cC \times \cB$ of a Dwyer map $I$ with a category $\cC$ remains a Dwyer map.
\label{productDwyer}
\end{lem}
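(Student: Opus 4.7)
The plan is to verify both defining conditions of a Dwyer map from \cref{defn:Dwyer-map} for the product inclusion $\cC \times I \colon \cC \times \cA \hookrightarrow \cC \times \cB$, by transporting the corresponding data from the Dwyer map $I \colon \cA \hookrightarrow \cB$ along the projection $\pi_\cB \colon \cC \times \cB \to \cB$. The inclusion $\cC \times I$ is automatically full since both factors of the product are full inclusions (the identity on $\cC$ and $I$ on $\cB$).

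For the sieve condition, I would take the characteristic functor to be the composite $\cC \times \cB \xrightarrow{\pi_\cB} \cB \xrightarrow{\chi} \cattwo$, where $\chi$ is the characteristic functor of $\cA \subseteq \cB$. Its preimage of $0$ is exactly $\cC \times \chi^{-1}(0) = \cC \times \cA$, so $\cC \times \cA$ is a sieve in $\cC \times \cB$.

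For the right adjoint left inverse condition, I would first identify the minimal cosieve of $\cC \times \cA$ in $\cC \times \cB$. An object $(c,b)$ lies in this minimal cosieve iff there is an arrow from some $(c',a) \in \cC \times \cA$ into $(c,b)$, equivalently iff there is an arrow $a \to b$ in $\cB$ for some $a \in \cA$, i.e.\ iff $b \in \cW$. Hence the minimal cosieve is $\cC \times \cW$. Then $\cC \times R \colon \cC \times \cW \to \cC \times \cA$, where $R \colon \cW \to \cA$ is the given right adjoint left inverse, is the desired right adjoint left inverse: products of adjunctions are adjunctions, the unit of $\cC \times R \dashv \cC \times I$ is the product of $\id_\cC$ with the identity unit of $R \dashv I$ and so is itself an identity, and $(\cC \times R)(\cC \times I) = \cC \times (RI) = \cC \times \id_\cA = \id_{\cC \times \cA}$.

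This is a routine verification; I do not anticipate any genuine obstacle, as the only non-obvious point is the identification of the minimal cosieve of $\cC \times \cA$ with $\cC \times \cW$, which follows directly from unpacking the definition in the product category.
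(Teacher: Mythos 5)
Your proof is correct. The paper itself gives no argument for this lemma---it is quoted directly from Schwede \cite{SchwedeOrbispaces}---and your verification is the standard one: the characteristic functor $\chi\circ\pi_{\cB}$ exhibits $\cC\times\cA$ as a sieve, the minimal cosieve is correctly identified as $\cC\times\cW$ (since an arrow $(c',a)\to(c,b)$ exists iff one exists $a\to b$, taking $c'=c$), and the product of the adjunction $I\dashv R$ with the identity adjunction on $\cC$ supplies the right adjoint left inverse with identity unit. The only blemish is notational: you wrote the adjunction as $\cC\times R\dashv \cC\times I$, which places $\cC\times R$ on the left-adjoint side, whereas it should read $\cC\times I\dashv \cC\times R$; the surrounding computation makes clear you intended the latter.
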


\begin{lem}[{\cite[Proposition 4.3]{ThomasonModelCat}}]
\label{pushoutDwyer}
Any pushout of a Dwyer map $I$ defines a Dwyer map $J$:
\[
\begin{tikzcd}
  \cA \arrow[d, "I"', hook]\arrow[r, "F"]\arrow[dr, phantom, "\ulcorner" very near end] &\cC \ar[d, "J", hook]\\
  \cB \arrow[r, "G" swap] &\cD.
\end{tikzcd}
\]
\end{lem}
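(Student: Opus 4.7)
The plan is to verify the three defining features of a Dwyer map for $J$ by exploiting the good behavior of sieve inclusions under pushouts in $\Cat$. Throughout, write $\cV \subset \cB$ for the cosieve complementary to $\cA$ and $\cW \subset \cB$ for the minimal cosieve containing $\cA$, with right adjoint left inverse $R \colon \cW \to \cA$.

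First I would establish a concrete description of the pushout $\cD$: on objects, $\mathrm{ob}(\cD) = \mathrm{ob}(\cC) \sqcup \mathrm{ob}(\cV)$, with $F$ absorbing $\cA$ into $\cC$; the hom-sets $\cD(c,c')$ and $\cD(v,v')$ coincide with those in $\cC$ and $\cB$ respectively, $\cD(c,v)$ is generated by zigzags $c \to F(a)$, $I(a) \to v$ with $a \in \cA$, and $\cD(v,c) = \emptyset$ because there are no morphisms $\cV \to \cA$ in $\cB$. From this description, fullness and faithfulness of $J$ are immediate, and the sieve structure on $\cC \subset \cD$ is obtained formally: the functor $\chi \colon \cB \to \cattwo$ witnessing $\cA$ as a sieve and the constant-$0$ functor $\cC \to \cattwo$ agree on $\cA$, so by the universal property of the pushout they induce $\chi' \colon \cD \to \cattwo$ with $(\chi')^{-1}(0) = \cC$.

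The main step is constructing the right adjoint left inverse for the inclusion $J' \colon \cC \hookrightarrow \cW'$ of $\cC$ into its minimal cosieve $\cW'$ in $\cD$. I claim $\cW'$ is the full subcategory of $\cD$ on the object set $\mathrm{ob}(\cC) \sqcup (\mathrm{ob}(\cW) \cap \mathrm{ob}(\cV))$ and that the evident canonical map identifies it with the pushout $\cW \sqcup_\cA \cC$. Indeed, an object $v \in \cV$ admits a morphism from $\cC$ in $\cD$ if and only if there exist $a \in \cA$ and a morphism $I(a) \to v$ in $\cB$, which is exactly the condition $v \in \cW$. I can then define $R' \colon \cW' \to \cC$ by the universal property of the pushout, gluing $F \circ R \colon \cW \to \cC$ with $\mathrm{id}_\cC \colon \cC \to \cC$; these agree on $\cA$ because $R$ is a left inverse to $\cA \hookrightarrow \cW$, and the identity $R' \circ J' = \mathrm{id}_\cC$ follows immediately.

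For the adjunction $J' \dashv R'$ with identity unit, the counit $\epsilon \colon IR \Rightarrow \mathrm{id}_\cW$ of $I \dashv R$ pushes forward along $\cW \to \cW'$ to a natural transformation landing in $J'R'$; together with the identity transformation on the $\cC$ factor of the pushout it glues, via the universal property applied in the arrow category, to a counit $\epsilon' \colon J'R' \Rightarrow \mathrm{id}_{\cW'}$ that inherits the triangle identities from $\epsilon$. The chief obstacle is pinning down the hom-sets of $\cD$ with enough care to justify both the full-inclusion claim for $J$ and the identification of $\cW'$ with $\cW \sqcup_\cA \cC$; once these explicit descriptions are secured, the remaining verifications are routine manipulations of adjoints and pushout universal properties.
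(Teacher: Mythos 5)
Your argument is correct and is essentially the canonical one: the paper gives no proof of \cref{pushoutDwyer}, deferring entirely to Thomason's Proposition 4.3 (see also Schwede's Construction 1.2 and \cite[Proof of Lemma 2.5]{BMOOPY}, cited elsewhere in the paper for exactly this explicit description of the pushout), and your description of $\cD$, the induced sieve functor $\chi'\colon\cD\to\cattwo$, the identification $\cW'\cong\cW\amalg_{\cA}\cC$, and the gluing of $F\circ R$ with $\id_{\cC}$ to produce the right adjoint left inverse is precisely what those sources do. The point you rightly flag as the technical heart---pinning down the hom-sets, in particular that $\cD(c,c')=\cC(c,c')$, that $\cD(v,c)=\varnothing$ because $\cA$ is a sieve, and that $\cD(c,v)$ is nonempty only for $v\in\cW$---is where all the work lies, and your reduction of arbitrary zigzags to length-two zigzags via fullness of $\cA$ in $\cB$ is the correct way to carry it out.
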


Note for example, that \cref{pushoutDwyer} explains the Dwyer map of \cref{ex:new-terminal-Dwyer}: if $\cA$ has a terminal object $t$, then the pushout
\[
\begin{tikzcd} \arrow[dr, phantom, "\ulcorner" very near end] \catone \arrow[d, hook, "0"'] \arrow[r, "t"] & \cA \arrow[d, hook] \\ \cattwo \arrow[r] & \cA^\triangleright
\end{tikzcd}
\]
defines the category $\cA^\triangleright$.

We now give an example of this example. By \cref{lem:hom-lattice}, the hom-posets $\mF\cP(x,y)$ of any pasting scheme $\cP$ are lattices, and in particular, have a terminal object, namely the bottommost path $q$ from $x$ to $y$ identified by \cref{lem:maximal-path-x-to-y}. Suppose further that $q \subset\cod_\cP$ is a subpath of the codomain path of $\cP$ and that $x \prec y$. Then we can form a new pasting scheme $\cP\cup\alpha$ by attaching a new atomic 2-cell from $x$ to $y$ below $\cP$ by identifying $\dom_\alpha$ with $q$, a process that we summarize by saying that we are ``attaching $\alpha$ at the bottom of $\cP$ from $x$ to $y$.''
The following lemma describes the effect of attaching a 2-cell on the bottom on the hom-poset from $x$ to $y$.

\begin{lem}\label{lem:cell-at-the-bottom}
Let $\cP \cup \alpha$ be a pasting scheme obtained by attaching a $2$-cell $\alpha$ at the bottom of a pasting scheme $\cP$ from $x$ to $y$. Then we have a pushout of hom-posets
\[
\begin{tikzcd} \catone \arrow[d, hook, "0"'] \arrow[r, "\dom_\alpha"] \arrow[dr, phantom, "\ulcorner" very near end] &\mF{\cP}(x,y) \arrow[d, hook] \\ \cattwo \arrow[r] & \mF(\cP \cup \alpha)(x,y)
\end{tikzcd}
\]
and hence $\mF{\cP}(x,y) \hookrightarrow \mF(\cP \cup \alpha)(x,y)$ is a Dwyer map.
\end{lem}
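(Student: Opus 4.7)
The plan is to exhibit an isomorphism $\mF(\cP\cup\alpha)(x,y) \cong \mF\cP(x,y)^{\triangleright}$ between the hom-poset of the enlarged pasting scheme and the category obtained by freely adjoining a new terminal object to $\mF\cP(x,y)$, and then to read off the pushout square and the Dwyer-map conclusion from \cref{ex:new-terminal-Dwyer} (or equivalently from \cref{basicDwyer} together with \cref{pushoutDwyer}).

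First I would enumerate the objects of $\mF(\cP\cup\alpha)(x,y)$, which by definition are the directed paths from $x$ to $y$ in the underlying plane graph of $\cP\cup\alpha$. Any such path is either a path entirely within $\cP$, contributing the objects of $\mF\cP(x,y)$, or it traverses some edge of $\cod_\alpha$. In the latter case, either $\cod_\alpha$ has length one and the path equals $\cod_\alpha$, or the interior vertices of $\cod_\alpha$ are fresh vertices of $\cP\cup\alpha$ whose only incident edges lie along $\cod_\alpha$, again forcing the whole path to coincide with $\cod_\alpha$. Hence $\ob\mF(\cP\cup\alpha)(x,y) = \ob\mF\cP(x,y) \sqcup \{\cod_\alpha\}$.

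Next I would pin down the partial order, which by \cref{cor:hom-poset} is the ``lies above'' relation. Attaching $\alpha$ below only enlarges the local orders at $x$ and $y$ by inserting the extremal edges of $\cod_\alpha$ strictly below every existing outgoing, respectively incoming, edge; since paths in $\mF\cP(x,y)$ do not use these new edges, the restriction of the ``lies above'' order to $\mF\cP(x,y)$ is unchanged. By construction $\cod_\alpha$ lies below $\dom_\alpha = q$, and by \cref{lem:maximal-path-x-to-y} the path $q$ lies below every other path in $\mF\cP(x,y)$, so transitivity identifies $\cod_\alpha$ as the unique new minimal path, i.e.\ as the categorical terminal object. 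This gives the claimed isomorphism with $\mF\cP(x,y)^{\triangleright}$.

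For the pushout, observe that $\dom_\alpha \colon \catone \to \mF\cP(x,y)$ names the terminal object $q$, and computing the $\Cat$-pushout $\cattwo +_\catone \mF\cP(x,y)$---which freely adjoins a single new arrow out of $q$---produces a category in which the new object is terminal, because every other object already has a unique map to $q$ and so a unique composite map to the new object. This is exactly $\mF\cP(x,y)^{\triangleright}$, matching the previous paragraph, which verifies the pushout square. The Dwyer-map claim is then immediate from \cref{ex:new-terminal-Dwyer}, or alternatively from pushing out the basic Dwyer map $0\colon \catone \hookrightarrow \cattwo$ of \cref{basicDwyer} along $\dom_\alpha$ via \cref{pushoutDwyer}. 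The only delicate step in this plan is the path-enumeration in the second paragraph; the hypothesis that $\alpha$ is attached at the bottom, and in particular that $\cod_\alpha$ contributes only fresh interior vertices, is precisely what rules out ``mixed'' paths that would jump between $\cP$-edges and $\cod_\alpha$-edges.
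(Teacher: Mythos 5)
Your proposal is correct and follows essentially the same route as the paper's (much terser) proof: identify $\mF(\cP\cup\alpha)(x,y)$ with $\mF\cP(x,y)^\triangleright$, using that $\dom_\alpha$ is terminal in $\mF\cP(x,y)$ by \cref{cor:hom-poset}, and then read off the pushout and the Dwyer-map claim from \cref{ex:new-terminal-Dwyer}. The extra details you supply---the enumeration of paths through $\cod_\alpha$ and the check that the ``lies above'' order on old paths is unchanged---are exactly the verifications the paper leaves implicit.
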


\begin{proof}
The poset $\mF{\cP}(x,y)$ has a terminal object, namely $\cod _{\cP_{x,y}} = \dom_\alpha$.
By \cref{cor:hom-poset}, $\mF{\cP}(x,y)^\triangleright \cong \mF{(\cP\cup \alpha)}(x,y)$, and the result follows.
\end{proof}

\begin{rmk}
\label{posetandDwyer}
Ordinarily we would have to be careful to disambiguate between pushouts taken in the category of categories and pushouts taken in the category of posets, but as observed by Raptis in \cite[Lemma 2.5]{Raptis:HTP}, the inclusion of posets into categories preserves pushouts along Dwyer maps. This applies in particular to the pushouts considered in  \cref{lem:cell-at-the-bottom} and \cref{prop:composition-pushout} and implies that the pushouts of categories considered here are again posets.
\end{rmk}

In fact, with a little more care, we can describe the effect on all of the hom-posets of $\cP$ by the procedure of attaching a new atomic 2-cell on the bottom.

\begin{prop}\label{prop:composition-pushout} Consider the pasting scheme $\cP \cup \alpha$ with an atomic $2$-cell $\alpha$ attached at the bottom of $\cP$ from $x$ to $y$, and let $a$ and $z$ be objects in the sub pasting scheme $\cP$. Then there is a pushout of hom-posets
\[
\begin{tikzcd} \mF \cP(y,z) \times \mF \cP(x,y) \times \mF \cP(a,x) \arrow[r, "\circ", hook] \arrow[d, hook] \arrow[dr, phantom, "\ulcorner" very near end] & \mF \cP(a,z) \arrow[d, hook] \\ \mF \cP(y,z) \times \mF (\cP \cup \alpha)(x,y) \times \mF \cP(a,x) \arrow[r, hook] & \mF (\cP \cup \alpha)(a,z)
\end{tikzcd}
\]
and hence $\mF \cP (a,z) \hookrightarrow \mF (\cP \cup \alpha)(a,z)$ is a Dwyer map.
\end{prop}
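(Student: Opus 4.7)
The approach is to realize the square as the outer rectangle of two successive pushouts built from \cref{lem:cell-at-the-bottom}, and to identify the result directly with $\mF(\cP \cup \alpha)(a,z)$. By \cref{lem:cell-at-the-bottom}, the inclusion $\mF\cP(x,y) \hookrightarrow \mF(\cP \cup \alpha)(x,y)$ is the pushout of the Dwyer map $\catone \hookrightarrow \cattwo$ along the functor $\dom_\alpha \colon \catone \to \mF\cP(x,y)$. Taking the product with $\mF\cP(y,z)$ and $\mF\cP(a,x)$ (which preserves pushouts in posets) produces another pushout square, in which the analogue of the left vertical is a Dwyer map by \cref{lem:dwyer_product}. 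Pushing out this Dwyer map along the concatenation inclusion $\mF\cP(y,z)\times\mF\cP(x,y)\times\mF\cP(a,x) \hookrightarrow \mF\cP(a,z)$ — fully faithful by \cref{prop:concat_ff} — yields a pushout $P$ together with a Dwyer map $\mF\cP(a,z) \hookrightarrow P$ via \cref{pushoutDwyer}.

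It remains to identify $P$ with $\mF(\cP\cup\alpha)(a,z)$. As a set, $P$ is the disjoint union of $\mF\cP(a,z)$ with the ``new'' elements $\mF\cP(y,z)\times\{\cod_\alpha\}\times\mF\cP(a,x)$ (adjoined outside the image of the fully faithful concatenation). On the other hand, any path from $a$ to $z$ in $\cP \cup \alpha$ either lies entirely in $\cP$, or uses some atomic $1$-cell of $\cod_\alpha$; since every interior vertex of $\cod_\alpha$ has a unique incoming and a unique outgoing edge in $\cP \cup \alpha$, such a path must traverse all of $\cod_\alpha$ and so admits a unique decomposition $t \cdot \cod_\alpha \cdot s$ with $t \in \mF\cP(a,x)$ and $s \in \mF\cP(y,z)$. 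This gives a bijection on objects, and the Dwyer property conferred on $\mF\cP(a,z) \hookrightarrow P$ by the construction transfers to the right vertical of the square in the statement.

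The main obstacle is showing that the bijection on objects preserves and reflects the ``lies above'' order. The relations purely within $\mF\cP(a,z)$ are automatic, and those within the $\cod_\alpha$-part reduce via the fully faithful concatenation for $\cP \cup \alpha$ to the coordinatewise order. The subtle case is the mixed one: given $p \in \mF\cP(a,z)$ and a new element $t\cdot\cod_\alpha\cdot s$, one needs $p \geq t\cdot\cod_\alpha\cdot s$ in $\mF(\cP\cup\alpha)(a,z)$ to be equivalent to the existence of $t' \geq t$, $s' \geq s$, and $q' \in \mF\cP(x,y)$ with $p \geq t'\cdot q'\cdot s'$ in $\mF\cP(a,z)$. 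The ``$\Leftarrow$'' direction follows from transitivity and the fact that $\cod_\alpha$ is the bottom element of $\mF(\cP\cup\alpha)(x,y)$. The ``$\Rightarrow$'' direction is the combinatorial heart of the verification: because $p$ lies in $\cP$, its common vertices with $t\cdot\cod_\alpha\cdot s$ are confined to the interiors of $t$ and $s$ together with possibly $x$ and $y$, while the new edge of $\cod_\alpha$ at $x$ (resp.~$y$) is inserted immediately below the first edge of $\dom_\alpha$ in the ordering of \cref{cvn:inp-out-order} (whose well-definedness rests on \cref{lem prohibited local configuration}). Translating the ``lies above'' inequalities across this insertion — so that the local constraint at $x$ and $y$ is identified with a constraint against the first edge of some path $q' \in \mF\cP(x,y)$ — produces the desired decomposition, completing the identification $P \cong \mF(\cP\cup\alpha)(a,z)$.
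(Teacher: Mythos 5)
Your overall route is the same as the paper's: assemble the square from \cref{lem:cell-at-the-bottom} using \cref{lem:dwyer_product}, \cref{prop:concat_ff} and \cref{pushoutDwyer}, and then identify the resulting pushout with $\mF(\cP\cup\alpha)(a,z)$ by a bijection on objects followed by a comparison of the partial orders, with the mixed old/new case as the crux. (The paper forms the pushout of the displayed span in one step rather than two, and phrases the order comparison as fullness and faithfulness of the comparison functor, but these are cosmetic differences.) Two steps of your order comparison are left incomplete, however.

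First, in the ``$\Rightarrow$'' direction of the mixed case --- which you correctly identify as the combinatorial heart --- ``translating the inequalities across the insertion'' stops short of producing the witness. The intended witness is $t'=t$, $s'=s$, $q'=\dom_\alpha$: one must show that if $p$ lies above $t\cdot\cod_\alpha\cdot s$ then $p$ lies above $t\cdot\dom_\alpha\cdot s$ in $\mF\cP(a,z)$. Beyond the constraints at $x$ and $y$ that you discuss, the common vertices of $p$ with $t\cdot\dom_\alpha\cdot s$ may also include interior vertices of $\dom_\alpha$, which are not common vertices of $p$ with $t\cdot\cod_\alpha\cdot s$ and hence are not reached by any ``translation''; the needed inequalities there (and at $x$ and $y$) hold because $\dom_\alpha\subset\cod_\cP$, so its edges are minimal in the orderings of \cref{cvn:inp-out-order} at every vertex they meet. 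Second, you check when an old path lies above a new one but never exclude the reverse relation: since the pushout $P$ has no morphism from a new object to an old one, you must also verify that $t\cdot\cod_\alpha\cdot s$ never lies above a path $p$ of $\cP$ outside the image of the concatenation. (The paper's argument: such a relation would put $p$ below $t\cdot\dom_\alpha\cdot s$, forcing $p$ to contain $\dom_\alpha\subset\cod_\cP$ and hence to pass through $x$ and $y$, a contradiction.) Both omissions are repairable along exactly these lines; the degenerate case $a\npeq x$ or $y\npeq z$ is covered implicitly by your object count.
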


\begin{proof}
Note that if $a$ and $z$ are objects in the pasting scheme $\cP$ with either $a \not\preccurlyeq x$ or $y \not\preccurlyeq z$ (or both), then the statement says that $\mF \cP(a,z) = \mF(\cP \cup \alpha)(a,z)$ since the posets on the left are empty.
But this equality holds, as any path in $\cP\cup \alpha$ from $a$ to $z$ will necessarily lie in $\cP$. For the remainder of the proof, we assume $a \peq x \prec y \peq z$.

Denote by $\cD$ the pushout of categories
\[
\begin{tikzcd} \mF \cP(y,z) \times \mF \cP(x,y) \times \mF \cP(a,x) \arrow[r, "\circ", hook] \arrow[d, hook] \arrow[dr, phantom, "\ulcorner" very near end] & \mF \cP(a,z) \arrow[d, hook] \\ \mF \cP(y,z) \times \mF (\cP \cup \alpha)(x,y) \times \mF \cP(a,x) \arrow[r, hook] & \cD.
\end{tikzcd}
\]
We will show that the comparison map
$\cD\to\mF (\cP \cup \alpha)(a,z)$
is an isomorphism of categories. As a preliminary observation, we note that the two maps in the underlying span are fully-faithful inclusions of posets: the vertical one is fully faithful (and a Dwyer map) by \cref{lem:dwyer_product,lem:cell-at-the-bottom} and the horizontal one is fully faithful by \cref{prop:concat_ff}. 

 First, we show that the comparison functor is bijective on objects.
   The objects of $\mF (\cP\cup\alpha)(a,z)$ are
paths in $\cP \cup \alpha$ from $a$ to $z$, which satisfy exactly one of the following conditions:
\begin{enumerate}[label=(\roman*)]
\item the path passes through the codomain of $\alpha$;
\item the path lies in $\cP$ and passes through $x$ and $y$;
\item the path lies in $\cP$ but does not pass through both $x$ and $y$.
\end{enumerate}
The three possibilities correspond respectively to
\begin{enumerate}[label=(\roman*)]
\item the objects in the complement of the image of the inclusion \[\mF \cP(y,z) \times \mF \cP(x,y) \times \mF \cP(a,x)\to\mF \cP(y,z)\times \mF(P\cup\alpha)(x,y) \times \mF \cP(a,x);\]
\item the objects of $\mF \cP(y,z) \times \mF \cP(x,y) \times \mF \cP(a,x)$;
\item the objects in the complement of the image of the inclusion \[\mF \cP(y,z) \times \mF \cP(x,y) \times \mF \cP(a,x)\to\mF \cP(a,z).\]
\end{enumerate}
It follows that the comparison functor $\cD\to\mF (P\cup\alpha)(a,z)$ is bijective on objects, as claimed.
    
By \cref{productDwyer,,lem:cell-at-the-bottom,pushoutDwyer}, $\mF \cP(a,z)\hookrightarrow \cD$ is a Dwyer map and $\cD$ is a poset by \cref{posetandDwyer}. Hence, the comparison functor $\cD\to\mF (P\cup\alpha)(a,z)$  is automatically faithful. 

To complete the proof that this comparison in fact defines an isomorphism, it suffices to argue that it is full.
For this, we need to analyze the different relations we can have between $p$ and $q$ in $\mF(\cP \cup \alpha)(a,z)$ and show the same relations hold in $\cD$. 
Both $\mF \cP(y,z) \times \mF (\cP \cup \alpha)(x,y) \times \mF \cP(a,x)$ and $\mF \cP(a,z)$ are full subcategories of  $\mF (\cP\cup \alpha)(a,z)$ (by \cref{cor:loc_ff_sub} and \cref{prop:concat_ff}), and since  fully-faithful inclusions are closed under pushouts \cite[Proposition 5.2]{FritschLatch:HomotopyInversesForNerve} they are also full subcategories in $\cD$. So we only need to deal with the case of $p$ belonging to $\mF \cP(a,z)$  and $q$ belonging to $\mF \cP(y,z) \times \mF (\cP \cup \alpha)(x,y) \times \mF \cP(a,x) $, with neither belong to $\mF \cP(y,z) \times \mF \cP(x,y) \times \mF \cP(a,x)$.
In particular $\cod_\alpha$ must be a subpath of $q$, so we may define a path $q'$ by replacing $\cod_\alpha$ with $\dom_\alpha$ in $q$.
    
By \cref{cor:hom-poset}, there is an arrow from $p$ to $q$ in $\mF (P \cup \alpha)(a,z)$ if and only if $p$ lies above $q$. If this is the case, then $p$ lies above $q'$ in  $\mF \cP(a,z)\subset\cD$ and $q'$ lies above $q$ in $\mF \cP(y,z) \times \mF (\cP \cup \alpha)(x,y) \times \mF \cP(a,x) \subset\cD$, so there is a morphism from $p$ to $q$ in $\cD$, as desired.

Again by  \cref{cor:hom-poset}, $\mF(\cP \cup \alpha)(a,z)$ does not contain any arrows from $q$ to $p$ because it is not possible for $q$ to lie above $p$. 
Indeed, we know that $q'$ lies above $q$, so if $q$ were to lie above $p$ then $q'$ would also be above $p$. In this case, $p$ would necessarily contain $\dom_\alpha \subset \cod_{\cP}$ as a subpath, and in particular $p$ would pass through $x$ and $y$, which contradicts the hypothesis that $p$ does not belong to $\mF \cP(y,z) \times \mF \cP(x,y) \times \mF \cP(a,x)$. 
\end{proof}

\subsection{Nerves, pushouts, and weak categorical equivalence}\label{ssec:dwyer-pushout}

In this section, we return to the setting of a pushout of a Dwyer map considered in \cref{pushoutDwyer}. 
In a companion paper \cite{HORR-Dwyer}, we show that such pushouts of categories can also be regarded as pushouts of $(\infty,1)$-categories in the sense made precise by the following result, which considers the nerve embedding from categories into quasi-categories:
 
 \begin{thm}[{\cite[1.5, 4.1]{HORR-Dwyer}}] 
    \label{AnodyneDwyer}
    Let 
    \[
    \begin{tikzcd}
      \cA \arrow[d, "I"', hook]\arrow[r, "F"]\arrow[dr, phantom, "\ulcorner" very near end] &\cC \ar[d, "J", hook]\\
      \cB \arrow[r, "G" swap] &\cD
    \end{tikzcd}
    \]
    be a pushout of categories, and assume $I$ to be a Dwyer map. Then the induced map of simplicial sets 
    \begin{equation}\label{eq:Dwyer-pushout-map}
        N\cC\aamalg{N\cA} N\cB \to N\cD\end{equation} is a weak categorical equivalence, and is in fact inner anodyne in the case where $F$ is injective on objects and faithful. 
     \end{thm}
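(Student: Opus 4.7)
The plan is to prove the stronger inner anodyne statement under the hypothesis that $F$ is injective on objects and faithful, and then to deduce the general weak categorical equivalence. The first step is to use the adjunction $I \dashv R$ with identity unit and counit $\epsilon \colon IR \Rightarrow \id_\cW$ to give an explicit description of $\cD$. Writing $\cV = \cB \setminus \cA$, the pushout category has $\ob \cD = \ob \cC \sqcup \ob \cV$ (in the injective case) with $\cD(c, c') = \cC(c, c')$, $\cD(v, v') = \cB(v, v')$, $\cD(v, c) = \emptyset$ (by the sieve property), and $\cD(c, v) \cong \cC(c, FR(v))$ for $v \in \cW$ via the bijection $\varphi \mapsto \epsilon_v \circ \varphi$; morphisms into $v \in \cV \setminus \cW$ from $\ob \cC$ are empty.

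The second step is to identify the non-degenerate simplices of $N\cD$ missing from the pushout. Such a simplex must contain both a vertex in $\ob \cC \setminus F(\ob \cA)$ (a ``pure $\cC$-vertex'') and a vertex in $\ob \cV$; the sieve structure forces all pure $\cC$-vertices to precede the $\cV$-vertices, so a non-pushout simplex has the form $\sigma = [c_0 \to \cdots \to c_k \to v_{k+1} \to \cdots \to v_n]$ with some pure $c_i$ and all $v_j \in \ob \cV$. For each such $\sigma$, factoring $c_k \to v_{k+1}$ through $FR(v_{k+1})$ yields an $(n+1)$-simplex
\[
\tilde\sigma = [c_0 \to \cdots \to c_k \to FR(v_{k+1}) \to v_{k+1} \to \cdots \to v_n]
\]
with $d_{k+1} \tilde\sigma = \sigma$. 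When the morphism $c_k \to FR(v_{k+1})$ in $\cC$ is not an identity (the generic case), $\tilde\sigma$ is non-degenerate and pairs $\sigma$ with this higher simplex; in the remaining case $\sigma$ is itself equal to the $\tilde\tau$ of $\tau := d_k \sigma$, which is strictly lower-dimensional. Consequently the non-pushout simplices partition into pairs $(\tau, \tilde\tau)$ of consecutive dimensions.

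The third step is to attach the pairs via inner horn fillings, ordered lexicographically by (the dimension $n+1$ of $\tilde\tau$, then decreasing by the crossing position $k$ of $\tau$). For a pair $(\tau, \tilde\tau)$ at this stage, I would verify that each face $d_i \tilde\tau$ for $i \neq k+1$ is either in the pushout or in a pair attached at a strictly earlier stage. The case analysis runs as follows: for $i \leq k$, removing $c_i$ either eliminates all pure $\cC$-vertices (placing the face in $N\cB$) or yields a ``top of pair'' face whose partner has dimension $(n-1, n)$; for $i = k+2$, removing $v_{k+1}$ either yields a face in $N\cC$ (when $n = k+1$) or a ``bottom of pair'' face of crossing $k+1$ at the same top dimension but strictly earlier in the secondary order; for $i > k+2$, removing a later $v_j$ yields a ``top of pair'' face at dimension $(n-1,n)$. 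Attaching pairs in this order exhibits $N\cC \amalg_{N\cA} N\cB \hookrightarrow N\cD$ as a transfinite composition of pushouts of inner horn inclusions $\Lambda^{n+1}_{k+1} \hookrightarrow \Delta^{n+1}$, hence as inner anodyne.

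The main obstacle is the combinatorial bookkeeping in the third step: arranging the order so that every face of every $\tilde\tau$ lands cleanly in the prior filtration, and in particular that the degenerate $\tilde\sigma$ mechanism correctly matches the ``top'' simplices of each pair with their strictly lower-dimensional ``bottom'' partners without double counting. For the general weak categorical equivalence claim without the injective/faithful hypothesis, I would factor $F \colon \cA \to \cC$ through an injective-on-objects, faithful functor $\cA \hookrightarrow \cC'$ (via a categorical mapping-cylinder construction) followed by a categorical equivalence $\cC' \to \cC$, apply the inner anodyne statement to the first pushout, and use the closure of weak categorical equivalences under pushouts along monomorphisms in the Joyal model structure to handle the second.
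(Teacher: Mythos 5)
Your overall strategy---an explicit description of the pushout category via the adjunction $I\dashv R$, a pairing of each missing ``bridging'' simplex $\sigma$ with a canonical simplex $\tilde\sigma$ of dimension one higher obtained by factoring the crossing edge through $FR(v_{k+1})$, and a filtration by dimension and then by crossing index realized as pushouts of inner horns---is essentially the one the paper sketches (your crossing position is its ``bridge index,'' your $\tilde\sigma$ is its ``bascule lift''); the paper defers the full details to the companion paper. There is, however, a genuine gap in your second step: the claim that every nondegenerate simplex of $N\cD$ outside the pushout must contain a ``pure $\cC$-vertex'' in $\ob\cC\setminus F(\ob\cA)$ is false. Since $F$ is only assumed injective on objects and faithful, not full, a simplex can fail to lie in the image of $N\cB$ because one of its edges $c_i\to c_{i+1}$ is not in the image of $F$, or because its crossing edge $c_k\to v_{k+1}$ does not factor through $\cA$, even when every one of its $\cC$-vertices lies in $F(\ob\cA)$. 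Concretely, take $\cA=\catone+\catone$, $\cC=\cattwo$ with $F$ the inclusion of the two objects $0,1$; let $\cB$ be the category with objects $a_0,a_1,v$ and a single nonidentity arrow $a_1\to v$, and let $I$ be the sieve inclusion of $\{a_0,a_1\}$ (a Dwyer map with $R(v)=a_1$). Then $\cD\cong\catthree$, the pushout of nerves is the spine of $\Delta^2$, and the two missing simplices $[0\to v]$ and $[0\to 1\to v]$ contain no pure $\cC$-vertex. Your pairing mechanism would actually handle this pair correctly, but your enumeration of the simplices to be attached omits them, and the face analysis in your third step inherits the error: deleting the last pure $\cC$-vertex from $\tilde\tau$ need not place the resulting face in $N\cB$. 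The repair is to index the missing simplices by the correct criterion (at least one $\cV$-vertex and not entirely in the image of $G$) and rerun the case analysis; the pairing and the ordering survive, but the verification that each face of $\tilde\tau$ other than $d_{k+1}\tilde\tau$ is already present must be redone against that criterion.

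A smaller issue concerns your reduction of the general statement to the injective-on-objects, faithful case. Left properness of the Joyal model structure does give you that $N\cC'\amalg_{N\cA}N\cB\to N\cC\amalg_{N\cA}N\cB$ is a weak categorical equivalence, since $N\cA\to N\cB$ is a monomorphism. But the other comparison $N\cD'\to N\cD$ is the nerve of a map between pushouts formed in $\Cat$, not a pushout in $\sSet$, so it is not covered by left properness of simplicial sets: you still owe an argument that $\cD'\to\cD$ is an equivalence of categories (or at least that its nerve is a weak categorical equivalence), which will again have to exploit the sieve/Dwyer structure rather than formal model-categorical closure properties.
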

     
   Here weak categorical equivalences refer to the weak equivalences in Joyal's model structure for quasi-categories. 
  \cref{AnodyneDwyer} is a refinement of a similar result of Thomason \cite[Proposition 4.3]{ThomasonModelCat} which proves that the same map is a weak homotopy equivalence.\footnote{Recall that Quillen's model structure for Kan complexes, whose weak equivalences are the weak homotopy equivalences, is a left Bousfield localization of Joyal's model structure, with more weak equivalences and fewer fibrant objects.}

We briefly explain the idea of the proof of \cref{AnodyneDwyer} in the case of interest here, when the other functor in the span is injective on objects and faithful. In that setting, the comparison map \eqref{eq:Dwyer-pushout-map} can easily be seen to be an inclusion, using an explicit description of the objects and morphisms in the category $\cD$ defined by the pushout of a Dwyer map established in \cite[Proof of Lemma 2.5]{BMOOPY}; cf.\ also \cite[Construction 1.2]{SchwedeOrbispaces} and \cite[\textsection7.1]{AraMaltsiniotisVers}. 

It remains only to define a suitable filtration of the map \eqref{eq:Dwyer-pushout-map} as a composite of pushouts of inner horn inclusions. We do this firstly by dimension and then by ``bridge index,'' which is defined in terms of the canonical projection map $N\cD \to \Delta^1$. As is typical for such combinatorial arguments, we identify a certain subset of non-degenerate ``bascule'' simplices in $N\cD$ that we attach via inner horns. The justification for this construction requires a carefully defined bijection between bridging simplices and their ``bascule lifts,'' a canonical simplex of dimension one larger which has the bridging simplex as a particular face. See \cite[\S3-4]{HORR-Dwyer} for more details.

The following application of  \cref{AnodyneDwyer} will play a key role in the proof of our main theorem:

\begin{cor}\label{cor:hom-dwyer-map}
Consider the pasting scheme $\cP \cup \alpha$ with an atomic $2$-cell $\alpha$ attached to the bottom of a pasting scheme $\cP$ from $x$ to $y$. Then for any objects $a$ and $z$ in the sub pasting scheme $\cP$ the natural map

\[ N{\mF}\cP(y,z) \times \Delta^1 \times N{\mF}\cP(a,x) \aamalg{N{\mF}\cP(y,z) \times \Delta^0 \times N{\mF}\cP(a,x)}N{\mF}\cP(a,z) \longrightarrow N\mF(\cP \cup \alpha)(a,z)\] is an inner anodyne extension and in particular
a weak categorical equivalence.
\end{cor}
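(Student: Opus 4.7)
The plan is to apply Theorem~\ref{AnodyneDwyer} twice and splice the resulting inner anodyne extensions. To ease notation, set $A = N\mF\cP(y,z)$, $B = N\mF\cP(a,x)$, $C = N\mF\cP(x,y)$, $C' = N\mF(\cP\cup\alpha)(x,y)$, $D = N\mF\cP(a,z)$, and $D' = N\mF(\cP\cup\alpha)(a,z)$, so the goal is to show that the canonical map
\[ A \times \Delta^1 \times B \aamalg{A \times \Delta^0 \times B} D \longrightarrow D' \]
is inner anodyne.

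The first step takes the pushout of Lemma~\ref{lem:cell-at-the-bottom} and products both columns with $\mF\cP(y,z) \times (-) \times \mF\cP(a,x)$. The vertical map remains a Dwyer map by two applications of Lemma~\ref{lem:dwyer_product}, the square remains a pushout since $\Cat$ is cartesian closed, and the horizontal map (which on the middle factor selects $\dom_\alpha$) is injective on objects and faithful. Applying Theorem~\ref{AnodyneDwyer} and using that $N$ preserves products yields an inner anodyne map
\[ A \times \Delta^1 \times B \aamalg{A \times \Delta^0 \times B} (A \times C \times B) \longrightarrow A \times C' \times B. \]

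The second step applies Theorem~\ref{AnodyneDwyer} directly to the pushout of Proposition~\ref{prop:composition-pushout}. The vertical map is a Dwyer map by that proposition, and the horizontal composition map is injective on objects and faithful by Proposition~\ref{prop:concat_ff}. This yields an inner anodyne map
\[ A \times C' \times B \aamalg{A \times C \times B} D \longrightarrow D'. \]

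To conclude, form the pushout of the first inner anodyne map along the composition map $A \times C \times B \to D$. Since inner anodyne extensions are stable under pushout and composition, this produces an inner anodyne map
\[ A \times \Delta^1 \times B \aamalg{A \times \Delta^0 \times B} D \longrightarrow A \times C' \times B \aamalg{A \times C \times B} D, \]
and composing with the map from the second step gives the desired inner anodyne extension. The only obstacle is ensuring that these two auxiliary pushouts splice correctly to produce precisely the morphism in the statement; once that bookkeeping is done, the argument is a two-stage application of Theorem~\ref{AnodyneDwyer}.
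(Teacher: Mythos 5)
Your argument is correct and uses the same ingredients as the paper's proof: the two pushout squares from \cref{lem:cell-at-the-bottom} (producted with the outer hom-posets) and \cref{prop:composition-pushout}, the closure of Dwyer maps under products, and \cref{AnodyneDwyer}. The paper is slightly more economical: it pastes those two pushout squares horizontally at the level of posets into a single pushout square with left leg the Dwyer map $\mF\cP(y,z)\times\catone\times\mF\cP(a,x) \hookrightarrow \mF\cP(y,z)\times\cattwo\times\mF\cP(a,x)$ and top leg $-\circ\dom_\alpha\circ-$, then applies \cref{AnodyneDwyer} once, which avoids your final splicing step.
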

\begin{proof}
By \cref{lem:cell-at-the-bottom}  and \cref{prop:composition-pushout}, the hom-posets form a composite pushout square
\[
\begin{tikzcd}[column sep=large] \mF \cP(y,z) \times \catone \arrow[d, "0"', hook] \times \mF \cP(a,x) \arrow[r, "-\circ\dom_\alpha\circ-", hook] \arrow[dr, phantom, "\ulcorner" very near end] & \mF \cP(a,z) \arrow[d, hook] \\ \mF \cP(y,z) \times \cattwo \times \mF \cP(a,x) \arrow[r, hook] & \mF (\cP \cup \alpha)(a,z).
\end{tikzcd}
\]
By \cref{productDwyer} and \cref{pushoutDwyer} the vertical functors are all Dwyer maps. The result follows from \cref{AnodyneDwyer}.
\end{proof}

\section{The homotopical uniqueness of  \texorpdfstring{$(\infty,2)$}{(infinity,2)}-categorical pasting composition}\label{sec:uniqueness}

In this section, we prove our main theorem, establishing the homotopical uniqueness of pasting composition using simplicial categories to model $(\infty,2)$-categories. In \S\ref{ssec:simplicial-model}, we review the simplicial categories model and give a precise statement of our result, which appears as \cref{thm:uniquepasting}. The remaining sections are occupied by its proof, which proceeds by an induction along the lines permitted by \cref{cor:pasting-scheme-presentations}, where a pasting scheme is built inductively by attaching an atomic 2-cell ``at the bottom'' along its domain path. 

We find it most expedient to split the inductive step into two cases, considering first the special case in which the codomain of the attached 2-cell is defined by a single edge, rather than a path of edges. This case is treated in \S\ref{ssec:base-cases}, while the general inductive step, in which the codomain of the attached cell is allowed to be any path of edges, is deduced from this special case in \S\ref{ssec:general-inductive-step} after which \cref{thm:uniquepasting} is proven. A direct argument that treats both cases at once is certainly possible---and indeed one was written up in the preparation of this manuscript---but we felt that the technicalities in the direct construction obscured the main ideas of the argument, so we prefer to leave this as an exercise for the reader who suspects they might prefer the other approach.

Finally, in \S\ref{ssec:model-independence} we discuss how an analogous result can be proven in a generic model of $(\infty,2)$-categories. As a corollary of our main theorem, we also establish that the space of composites of a given pasting diagram in an $(\infty,2)$-category is contractible. 

\subsection{The simplicial category model of \texorpdfstring{$(\infty,2)$}{(infinity,2)}-categories}\label{ssec:simplicial-model}

To define the simplicial categories model of $(\infty,2)$-categories, we make use of a product-preserving functor $\tau \colon \sSet \to \Set$ first considered by Joyal, which takes a simplicial set to the set of isomorphism classes of objects in its homotopy category. Applying this functor hom-wise to a simplicially enriched category $\cC$, this defines an ordinary 1-category $\tau_*\cC$ that might be regarded as a homotopy category associated to $\cC$, using the notion of homotopy class of maps provided by the Joyal model structure on simplicial sets.

\begin{thm}[{the model structure for $(\infty,2)$-categories}]\label{def sCat model structure}
The category $\sCat$ of simplicially enriched categories admits a left-proper, combinatorial model structure in which:
\begin{itemize}
    \item A map $f \colon \cC\to \cD$ is a \textbf{weak equivalence} just when:
\begin{enumerate}[label=(W\arabic*)]
    \item For each pair of objects $x,y$, the map $\cC(x,y) \to \cD(fx,fy)$ is a weak categorical equivalence of simplicial sets, and \label{DK2 local equiv} 
    \item the functor $\tau_* f \colon \tau_* \cC \to \tau_* \cD$ is essentially surjective. \label{DK2 global equiv}
\end{enumerate}
\item An object $\cC$ is \textbf{fibrant} if and only if $\cC(x,y)$ is a quasi-category for every pair of objects $x,y$.
\item A map $f \colon \cC\to \cD$ whose \emph{codomain is fibrant} is a \textbf{fibration} just when:
\begin{enumerate}[label=(F\arabic*)]
    \item For each pair of objects $x,y$, the map $\cC(x,y) \to \cD(fx,fy)$ is a fibration in the Joyal model structure on $\sSet$, and \label{DK2 local fib}
    \item the functor $\tau_* f \colon \tau_* \cC \to \tau_* \cD$ is an isofibration.
    \label{DK2 global fib}
\end{enumerate}
\item 
The \textbf{cofibrations} are the cofibrations in the model structure for $(\infty,1)$-cat\-e\-gories on $\sCat$  \cite{Bergner:MS}.
\end{itemize}
\end{thm}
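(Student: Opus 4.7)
The plan is to apply a cofibrantly generated model structure recognition theorem, mimicking the proof of Bergner's $(\infty,1)$-categorical model structure on $\sCat$ but systematically replacing the Quillen model structure on $\sSet$ with the Joyal model structure throughout. Because Joyal's model structure is a cartesian closed monoidal model category, the enriched category machinery of Lurie (\emph{Higher Algebra}, Appendix A.3) applies, and this essentially dictates the argument. First I would check that $\sCat$ is locally presentable (standard) and that the two defining classes (W1)--(W2) and (F1)--(F2) are closed under the relevant retracts and satisfy two-out-of-three — the former follows from two-out-of-three in the Joyal model structure applied homwise, combined with the standard interaction between $\tau_*$ and essential surjectivity.

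Next I would pin down the generators. The generating cofibrations are those from Bergner's model structure: the map $\emptyset \to [0]$ adjoining an object, together with the maps $\mathcal{U}[\partial \Delta^n] \to \mathcal{U}[\Delta^n]$, where $\mathcal{U}[K]$ denotes the simplicially enriched category with two objects and hom-object $K$. For the generating trivial cofibrations I would take two families: first, the maps $\mathcal{U}[A] \to \mathcal{U}[B]$ obtained from a generating set of Joyal-trivial cofibrations $A \to B$ in $\sSet$ (e.g.\ inner horn inclusions together with an appropriate set of Joyal-anodyne extensions), and second, a single map $\cwtoo{[0]}{E}$ witnessing an equivalence of objects, where $E$ is a cofibrant simplicial category freely generated by an equivalence in the $(\infty,2)$-sense (built from $J$, the nerve of the free-living isomorphism, which is the path object for the Joyal model structure). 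The small object argument then supplies both weak factorization systems, and the first family guarantees homwise Joyal-fibrancy after fibrant replacement, while the second guarantees global essential surjectivity.

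The verification of the axioms splits naturally into two halves. The (cofibration, trivial fibration) factorization and lifting properties reduce cleanly to homwise Joyal-trivial fibrations together with surjectivity on objects, where no interaction between the local and global data is needed. Closure of cofibrations under the pushout-product with trivial cofibrations uses the monoidal model category structure on $(\sSet, \mathrm{Joyal})$: since $-\times -$ is a Quillen bifunctor there, applying $\mathcal{U}[-]$ and passing to pushouts in $\sCat$ preserves this property. Left properness follows because cofibrations in $\sCat$ are in particular homwise monomorphisms, and Joyal's model structure is left proper.

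The main obstacle, as in Bergner's original argument, will be the (trivial cofibration, fibration) half — specifically the equivalence between the lifting characterization of fibrations and the external characterization (F1)--(F2). The key tool is a path object for fibrant $\cC \in \sCat$, built as the homwise cotensor $\cC^{J}$ with the interval $J$, which models ``naturally isomorphic functors'' in the $(\infty,2)$-sense; combined with the standard Dwyer--Kan trick of replacing a map by its mapping path object, this reduces lifting against the object-equivalence generator $\cwtoo{[0]}{E}$ to lifting against its image under $\mathcal{U}[-]$, which in turn reduces to the homwise Joyal case. The global isofibration condition in (F2) is then precisely what is needed to solve the lifts involving the object coordinate. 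Once this equivalence of characterizations is in hand, all remaining axioms fall out formally, and the specified classes of weak equivalences, fibrations, and cofibrations assemble into the desired left proper, combinatorial model structure.
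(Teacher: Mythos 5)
The paper does not prove this theorem by hand: its entire proof is a citation of Lurie's general result that categories enriched in a suitable combinatorial monoidal model category carry a left-proper combinatorial model structure (Higher Topos Theory, Proposition A.3.2.4, applied to $\sSet$ with the Joyal model structure), together with HTT Example A.3.2.23 and Theorem A.3.2.24 for the characterization of the fibrations with fibrant codomain. Your sketch is essentially an outline of the proof of that general theorem specialized to the Joyal case: the generating (trivial) cofibrations, the suspension functor $\mathcal{U}[-]=\cattwo[-]$, the extra object-equivalence generator, and the path-object/mapping-path reduction for identifying the fibrations all match the architecture of Lurie's (and Bergner's) argument. So the route is the same mathematics, just unrolled rather than cited. (A minor slip: the relevant appendix is in \emph{Higher Topos Theory}, not \emph{Higher Algebra}.)

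Two steps in your outline are asserted more quickly than they can be proved. First, left properness does not follow from ``cofibrations are homwise monomorphisms plus left properness of the Joyal model structure'': pushouts in $\sCat$ are not computed homwise, so you cannot transport properness across the homs until you have computed what a pushout along $\cattwo[\partial\Delta^n]\to\cattwo[\Delta^n]$ does to \emph{every} hom-object. That computation (a filtration of each hom by free composites through the newly attached cell) is the technical heart of the proof of A.3.2.4, and the ingredients you name are the right inputs to it, but the reduction itself is the work. Second, for the object-equivalence generator $[0]\to E$ to be a weak equivalence in the sense of (W1)--(W2), and for the fibrant objects to be exactly the homwise quasi-categories rather than something further localized, you need the invertibility hypothesis for the Joyal model structure --- roughly, that formally inverting an equivalence in a quasi-category does not change it. This is a genuinely non-formal input (it is what makes the Joyal model structure ``excellent'' in Lurie's sense, and is exactly what A.3.2.24--A.3.2.25 depend on), and your sketch does not flag where it enters; without it the second family of generating trivial cofibrations need not consist of weak equivalences, and the claimed characterization of fibrant objects and fibrations would fail.
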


\begin{proof}
The existence of the model structure is a special case of \cite[Proposition A.3.2.4]{HTT}---using \cite[Definition A.3.2.1]{HTT} and \cite[Definition A.3.2.9]{HTT} for \ref{DK2 global equiv} and \ref{DK2 global fib}---while the characterization of the fibrations with fibrant codomain is \cite[Example A.3.2.23]{HTT} and \cite[Theorem A.3.2.24]{HTT}.
\end{proof}

Recall that simplicially enriched categories $\cA$ may be identified with simplicial objects in $\Cat$ comprised of categories $\cA_n$ for $n \geq 0$ with a common set of objects and identity-on-objects functors $\alpha^* \colon \cA_n \to \cA_m$ for each simplicial operator $\alpha \colon [m] \to [n]$ in $\DDelta$. 
From this vantage point, there is a useful characterization of the cofibrant objects in the model structure for $(\infty,2)$-categories due to Dwyer and Kan that have more recently been christened ``simplicial computads'' \cite[\textsection16.2]{Riehl:2014ch}.

\begin{defn}\label{defn:computad} A simplicial category, presented as a simplicial object $\cA_\bullet \in \Cat^{\DDelta^\op}$ is a \textbf{simplicial computad} just when:
\begin{itemize}
\item for each $n \geq 0$, the category $\cA_n$ of objects and $n$-arrows is free on a reflexive directed graph whose non-identity arrows are called \textbf{atomic $\boldsymbol{n}$-arrows}; and
\item the degeneracy functors $\sigma^* \colon \cA_n \to \cA_m$ indexed by epimorphisms $\sigma \colon [m] \twoheadrightarrow [n] \in \DDelta$ preserve atomic arrows.
\end{itemize}
\end{defn}

For example:

\begin{prop}\label{prop:pasting-computad} Let $\mF \cP$ be the free $2$-category generated by a pasting scheme and consider the simplicial category $N_* \mF \cP$ defined by applying the nerve hom-wise. Then $N_*\mF \cP$ is a simplicial computad whose
\begin{itemize}
    \item objects are the vertices of $\cP$,
    \item atomic $0$-arrows $e \colon x \to y$ are the edges of $\cP$,
    \item atomic $1$-arrows $(p_0 \geq p_1) \colon x \to y$ are given by parallel pairs of paths, with $p_0$ above $p_1$, so that the only vertices $p_0$ and $p_1$ have in common are their endpoints, and 
    \item atomic $n$-arrows $(p_0 \geq \cdots \geq p_n) \colon x \to y$ are given by parallel paths, with each $p_i$ above $p_{i+1}$,  so that the only vertices $p_0$ and $p_n$ have in common are their endpoints.
\end{itemize}
\end{prop}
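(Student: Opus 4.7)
By \cref{cor:hom-poset}, each hom $\mF\cP(x,y)$ is a poset whose objects are directed paths from $x$ to $y$, with $p \geq q$ iff $p$ lies above $q$. Hence the category $(N_*\mF\cP)_n$ has the vertices of $\cP$ as objects and, as morphisms $x \to y$, chains $(p_0 \geq \cdots \geq p_n)$, composed componentwise. The description of objects is then immediate, and the description of atomic $0$-arrows follows since the underlying $1$-category of $\mF\cP$ is the free category on the edges of $\cP$. The remaining content --- that each $(N_*\mF\cP)_n$ is free on the proposed atomic $n$-arrows, and that degeneracies preserve atomicity --- I plan to extract from a single key claim.

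The \textbf{key claim} is that if $(p_0 \geq \cdots \geq p_n) \colon x \to y$ is a chain and $v$ is a vertex appearing on both $p_0$ and $p_n$, then $v$ appears on every $p_i$. Granted this, I enumerate the common vertices of $p_0$ and $p_n$ in order as $x = v_0, v_1, \ldots, v_k = y$ and split each $p_i$ at these vertices to produce a factorization $(p_0 \geq \cdots \geq p_n) = \alpha_k \circ \cdots \circ \alpha_1$ in which each $\alpha_j \colon v_{j-1} \to v_j$ is atomic by construction (the first and last paths of $\alpha_j$ meet only at their endpoints). Uniqueness follows because any composition-factoring of the chain must split at vertices lying on every $p_i$, and in particular on both $p_0$ and $p_n$, so its intermediate objects must occur among $v_1, \ldots, v_{k-1}$. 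For a degeneracy $\sigma^* \colon (N_*\mF\cP)_n \to (N_*\mF\cP)_m$ indexed by a surjection $\sigma \colon [m] \twoheadrightarrow [n]$, the image $(p_{\sigma(0)} \geq \cdots \geq p_{\sigma(m)})$ of an atomic chain has first and last entries $p_{\sigma(0)} = p_0$ and $p_{\sigma(m)} = p_n$, so remains atomic.

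To prove the key claim, I may assume $v \notin \{x, y\}$. I split $p_0 = p_0^- \cdot p_0^+$ and $p_n = p_n^- \cdot p_n^+$ at $v$, noting that $p_0^- \geq p_n^-$ and $p_0^+ \geq p_n^+$. By \cref{prop:parallel-arrow-pasting-scheme}, the sub pasting schemes $p_0^-/p_n^-$ and $p_0^+/p_n^+$ exist and together exhaust $p_0/p_n$; their intersection consists of the single vertex $v$, since any further common vertex $u \neq v$ would satisfy both $u \peq v$ and $v \peq u$, forcing a directed cycle and contradicting \cref{lem:acyclicity}. Any $p_i$ with $p_0 \geq p_i \geq p_n$ lies in $p_0/p_n$ by the same proposition, and since its source $x$ is a vertex of $p_0^-/p_n^-$ only and its target $y$ is a vertex of $p_0^+/p_n^+$ only, the path must cross the pinch point $v$. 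This key claim is the chief obstacle; the remaining verifications are combinatorial bookkeeping once it is in hand.
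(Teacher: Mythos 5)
Your proof is correct and follows essentially the same route as the paper's: both reduce the unique-factorization claim to showing that every vertex common to $p_0$ and $p_n$ lies on every intermediate path $p_i$, and then split each chain at exactly those common vertices. The only difference is that the paper asserts this key containment (via the shared subpaths $r_j$ of \cref{lem:paths_partition}) with little justification, whereas your pinch-point argument using \cref{prop:parallel-arrow-pasting-scheme} and acyclicity supplies an explicit proof of that step.
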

\begin{proof}
For any 2-category $\mathcal{C}$, the simplicial category $N_*\mathcal{C}$ may be described as follows:
\begin{itemize}
\item its objects and 0-arrows are the 0-cells and 1-cells of $\mathcal{C}$;
\item its 1-arrows are the 2-cells of $\mathcal{C}$, considered as arrows from their 0-cell source to the 0-cell target;
\item its $n$-arrows are vertically composable sequences of $n$ 2-cells, considered as arrows from their common 0-cell source to their common 0-cell target.
\end{itemize} 
An $n$-arrow in the nerve of a 2-category is degenerate if and only if at least one of the constituent 2-cells is an identity on its boundary 1-cells. 

An $n$-arrow is atomic if and only if it cannot be factored horizontally.\footnote{Warning: the atomic 1-arrows include the \emph{atomic $2$-cells}, which cannot be expressed as a non-trivial pasting composite, but are considerably more general. A vertical composite of 2-cells might define an atomic 1-arrow if there is no way to factor the vertical composite horizontally.} 
Specializing \cref{defn:computad}, the simplicial category $N_*\mathcal{C}$ is a simplicial computad just when each $n$-arrow admits a unique factorization into atomic $n$-arrows. The second condition---that degenerate images of atomic $n$-arrows are atomic $n$-arrows---is automatic in the case of simplicial categories of the form $N_*\mathcal{C}$.

Now we specialize to the case of the 2-category $\mF{\cP}$. Recall from \cref{cor:hom-poset} that there is a 2-cell from $p$ to $q$ in $\mF{\cP}$ just when $p$ and $q$ are parallel paths of edges from $x$ to $y$ and $p$ is above $q$. Thus, an $n$-arrow in $N_*\mF \cP$ is given by a sequence of $n+1$ parallel paths of edges $p_0, \ldots, p_n$ with $p_i$ above $p_{i+1}$, these being the source and target paths in the sequence of vertically composable 2-cells. Such an $n$-arrow is degenerate if and only if $p_i = p_{i+1}$ for some $0 \leq i < n$.

We refer to an $n$-arrow in $N_*\mF \cP$ as \textbf{horizontally indecomposable} if there is no interior vertex $x  \prec u  \prec y$ that appears in every path in the sequence. Equivalently, this can be expressed by asking that the only vertices $p_0$ and $p_n$ have in common are their endpoints.
 
We claim that $N_*\mF \cP$ is a simplicial computad whose
\begin{itemize}
    \item objects are the vertices of $\cP$;
    \item atomic 0-arrows are the edges of $\cP$;
    \item atomic 1-arrows are the 2-cells $\alpha$ of $\mF \cP$ that are horizontally indecomposable, in the sense that the paths $\dom_\alpha$ and $\cod_\alpha$ do not share any vertices, aside from their common endpoints;
    \item atomic $n$-arrows are the vertically composable sequences of 2-cells of $\mF \cP$ that are horizontally indecomposable, in the sense that there is no vertex contained in the source and target paths of every  2-cell, aside from their common endpoints. 
\end{itemize}

Since degeneracies of horizontally indecomposable $n$-arrows are  horizontally indecomposable, it remains only to argue that a generic $n$-arrow factors uniquely as a composite of atomic $n$-arrows. 

We now argue that every $n$-arrow given by a sequence of $n+1$ parallel paths of edges $p_0 \geq \cdots \geq p_n$ from $x$ to $y$, with each $p_i$ above $p_{i+1}$,  factors uniquely as a composite of atomic $n$-arrows. The vertical composite 2-cell is the unique composite 2-cell in the sub pasting scheme $p_0/p_n$ of $\cP$ comprised of the cells that lie between the paths $p_0$ and $p_n$ (see \cref{def:p-over-q}), and we may restrict to this sub pasting scheme. By \cref{lem:paths_partition}, we may factor these paths as
\begin{align*}
p_0&= r_{0} p_{0,1} r_{1} p_{0,2} \cdots r_{m-1} p_{0,m} r_{m} \\
p_n&= r_{0} p_{n,1} r_{1} p_{n,2} \cdots r_{m-1} p_{n,m} r_{m}
\end{align*}
where
\begin{itemize}
\item the $r_{j}$ are paths, possibly of length zero (containing only a vertex), 
\item each path $p_{i,j}$  has positive length, and
\item $p_{0,j}$ and $p_{n,j}$ intersect at their endpoints and do not intersect at any interior vertices, and $p_{0,j}$ lies above $p_{n,j}$. 
\end{itemize}

The vertices $u_i$ appearing on both $p_0$ and $p_n$ are exactly the vertices that lie on the subpaths $r_j$ and hence are  linearly ordered $x = u_0 \prec u_1 \prec \cdots \prec u_{k-1} \prec u_{k} = y$; note $k \geq m$.

The paths $r_0, \ldots, r_m$ are also subpaths of each intermediate path $p_i$ from $x$ to $y$. Thus, we have similar factorizations
\[  
p_i = r_{0} p_{i,1} r_{1} p_{i,2} \cdots r_{m-1} p_{i,m} r_{m} 
\]
where each $p_{i,j}$ has positive length and lies above $p_{i+1,j}$ and below $p_{i-1,j}$; note the path $p_i$  might intersect with its  neighbors at additional interior vertices. The $n$-arrow $p_0 \geq\cdots\geq p_n$ then factors as a composite of the horizontally indecomposable $n$-arrows  from $u_{s}$ to $u_{s+1}$. These atomic factors have two forms: some are given by the parallel paths $p_{0,j} \geq \cdots \geq p_{n,j}$, while others are degenerate $n$-arrows on an atomic $0$-arrow $e$, arising as an edge $e \colon u_s \to u_{s+1}$ of one of the paths $r_j$. As this is evidently the unique factorization of the $n$-arrow $p_0 \geq\cdots\geq p_n$ into horizontally indecomposable $n$-arrows, $N_* \mF \cP$ is a simplicial computad, as claimed. 
\end{proof}

\begin{rmk}\label{rmk:sub-pasting-sub-computad} Extending \cref{prop:pasting-computad}, if $\cQ \subset \cP$ is a sub pasting scheme that is full on atomic 2-cells, then $N_*\mF\cQ \hookrightarrow N_*\mF\cP$ is a simplicial subcomputad inclusion, preserving atomic arrows in each dimension.
\end{rmk}

There is an adjunction
\[
\begin{tikzcd} \sSet \arrow[r, bend left, "{\cattwo[-]}"] \arrow[r, phantom, "\bot"] & \slicel{\catone+\catone}{\sCat} \arrow[l, bend left, "\hom"]
\end{tikzcd}
\]
that sends a simplicial set to its ``directed suspension,'' regarded as a simplicial category with two specified objects ``$-$'' and ``$+$'' and the homs:
\[
\cattwo[X](-,-) = \cattwo[X](+,+) \coloneqq \Delta^0,  \qquad \cattwo[X](-,+) \cong X, \qquad \cattwo[X](+,-) \coloneqq \varnothing.\]
This functor has a right adjoint that extracts the specified hom from a simplicial category with two specified objects.  

\begin{lem}\label{lem:directed-suspension-adjunction}
If $\sSet$ has the Joyal model structure for $(\infty,1)$-categories and $\sCat$ has the model structure for $(\infty,2)$-categories, then the adjunction
\[
\begin{tikzcd} \sSet \arrow[r, bend left, "{\cattwo[-]}"] \arrow[r, phantom, "\bot"] & \slicel{\catone+\catone}{\sCat} \arrow[l, bend left, "\hom"]
\end{tikzcd}
\]
is Quillen. 
\end{lem}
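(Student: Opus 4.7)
The plan is to verify the Quillen adjunction by checking that the left adjoint $\cattwo[-]$ preserves cofibrations and trivial cofibrations. Since $\cattwo[-]$ is a left adjoint, it preserves all colimits, so it suffices to verify the preservation properties on a generating set.

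First I would handle cofibrations. The cofibrations in the model structure for $(\infty,2)$-categories agree by \cref{def sCat model structure} with those in the Bergner model structure for $(\infty,1)$-categories, and the model structure on $\slicel{\catone+\catone}{\sCat}$ is created from $\sCat$. A standard presentation of the generating cofibrations of the Bergner model structure consists of $\varnothing \to \Delta^0$ together with the maps $\cattwo[\partial\Delta^n] \hookrightarrow \cattwo[\Delta^n]$ for $n \geq 0$. The latter are precisely the images under $\cattwo[-]$ of the generating cofibrations $\partial\Delta^n \hookrightarrow \Delta^n$ of the Joyal model structure, so $\cattwo[-]$ sends generating cofibrations to cofibrations, and hence preserves all cofibrations.

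For trivial cofibrations, since $\cattwo[-]$ already preserves cofibrations, it suffices to show that if $f \colon X \to Y$ is a weak categorical equivalence of simplicial sets then $\cattwo[f] \colon \cattwo[X] \to \cattwo[Y]$ is a weak equivalence in the model structure of \cref{def sCat model structure}. We verify the two conditions directly. For \ref{DK2 local equiv}, the hom-wise maps $\cattwo[X](x,y) \to \cattwo[Y](x,y)$ are either the identity of $\Delta^0$, the identity of $\varnothing$, or the map $f \colon X \to Y$ itself, all of which are weak categorical equivalences. For \ref{DK2 global equiv}, the functor $\tau_* \cattwo[f] \colon \tau_* \cattwo[X] \to \tau_* \cattwo[Y]$ is the identity on the object set $\{-, +\}$, hence essentially surjective.

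There is no real obstacle here: the entire argument is forced once one observes that $\cattwo[-]$ converts the generating cofibrations of the Joyal model structure into generating cofibrations of the Bergner model structure and that its effect on homs is essentially trivial except in the single hom-object where it is the identity on $X$. The only mild subtlety is to confirm that the slice model structure on $\slicel{\catone+\catone}{\sCat}$ has the expected (trivial) cofibrations, which is immediate from the general recipe for slice model structures.
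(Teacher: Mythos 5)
Your argument is correct, and the half concerning cofibrations coincides with the paper's: both identify the generating cofibrations of the Bergner model structure (which are the cofibrations of \cref{def sCat model structure}) as $\varnothing\to\catone$ together with the images $\cattwo[\partial\Delta^n]\hookrightarrow\cattwo[\Delta^n]$ of the generating Joyal cofibrations, so that $\cattwo[-]$ preserves cofibrations. Where you diverge is in the acyclicity part. The paper invokes the Joyal--Tierney criterion (\cite[Proposition 7.15]{JoyalTierney:QCSS}): a Quillen adjunction may be detected by checking that the left adjoint preserves cofibrations and the \emph{right} adjoint preserves fibrations between fibrant objects; the latter is immediate since, by condition \ref{DK2 local fib} of \cref{def sCat model structure}, extracting the hom from $-$ to $+$ of a fibration between fibrant simplicial categories yields a Joyal fibration of quasi-categories. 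You instead verify directly that $\cattwo[-]$ carries weak categorical equivalences to weak equivalences of simplicial categories by inspecting the four hom-maps and the effect on $\tau_*$, which together with preservation of cofibrations gives preservation of trivial cofibrations. Your route proves slightly more (the left adjoint preserves \emph{all} weak equivalences, not just trivial cofibrations) at the cost of needing the explicit description of the weak equivalences via conditions \ref{DK2 local equiv} and \ref{DK2 global equiv}, while the paper's route outsources the acyclicity check to the characterization of fibrations between fibrant objects; both are sound. One cosmetic slip: the first generating cofibration of the Bergner model structure should be written $\varnothing\to\catone$ (the one-object discrete simplicial category), not $\varnothing\to\Delta^0$, which denotes a simplicial set rather than a simplicial category.
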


So in particular, the simplicial categories $\cattwo[X]$ are cofibrant and moreover $\cattwo[-]$ carries Joyal trivial cofibrations to trivial cofibrations.

\begin{proof}
By \cite[Proposition 7.15]{JoyalTierney:QCSS} it suffices to show the left adjoint preserves (generating) cofibrations and the right adjoint preserves fibrations between fibrant objects, and both of these properties are immediate from 
\cref{def sCat model structure}.
\end{proof}

We regard the ordinal categories $\catone$, $\cattwo$, $\ldots, \catn$ as discrete simplicial categories.

\begin{const}\label{defn:graph-of-pasting-scheme}
For any pasting scheme $\cP$, define  a free simplicial category $\mG\cP$
\[
\begin{tikzcd} & \aamalg{\text{edge}\cP} \catone+\catone \arrow[d]\arrow[r, hook] \arrow[dr, phantom, "\ulcorner" very near end] & \aamalg{\text{edge}\cP} \cattwo \arrow[d] \\ \varnothing \arrow[r, hook] &
\aamalg{\ob\cP} \catone \arrow[r, hook] & \mG\cP_0 \arrow[r, hook] & \mG\cP \\ &
& \aamalg{\text{face}\cP} \cattwo[\partial\Delta^1] \arrow[u] \arrow[r, hook] \arrow[ur, phantom, "\llcorner" very near end] & \aamalg{\text{face}\cP}\cattwo[\Delta^1] \arrow[u]
\end{tikzcd}
\]
 by
\begin{enumerate}[label=(\roman*)]
\item first attaching all its objects, to form $\amalg_{\ob \cP} \catone$,
\item then attaching all its atomic 1-cells along maps $\catone+\catone \cong \cattwo[\partial\Delta^0] \hookrightarrow \cattwo[\Delta^0] \cong \cattwo$,\footnote{Since this colimit is formed in simplicial categories, the result is the free category generated by the underlying 1-graph (as opposed to the underlying 1-graph itself).} and
\item finally attaching all its atomic 2-cells along maps $\cattwo[\partial\Delta^1] \to \cattwo[\Delta^1]$.
\end{enumerate}
\end{const}

\begin{lem}\label{lem:graph-computad}
For any pasting scheme $\cP$, the simplicial category $\mG{\cP}$ is a simplicial computad whose atomic $0$-arrows are the edges of $\cP$, whose atomic $1$-arrows are the atomic $2$-cells of $\cP$, and whose higher atomic arrows are all degenerate.
\end{lem}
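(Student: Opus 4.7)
The plan is to verify the simplicial computad structure of $\mG\cP$ directly from its construction as an iterated pushout. The key input is the standard fact (see e.g.\ \cite{Riehl:2014ch}) that simplicial computad inclusions are closed under coproducts and pushouts, and that each of the building blocks used in \cref{defn:graph-of-pasting-scheme}---namely $\catone$, $\cattwo$, and $\cattwo[\Delta^1]$---is a simplicial computad, with the inclusions $\cattwo[\partial\Delta^0] = \catone + \catone \hookrightarrow \cattwo$ and $\cattwo[\partial\Delta^1] \hookrightarrow \cattwo[\Delta^1]$ being simplicial computad inclusions. Since $\mG\cP$ is built from the empty simplicial category solely by attaching such cells, it is itself a simplicial computad.

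Next I would identify the atomic arrows by computing the attachment levelwise in $\Cat$, which is valid because all pushouts in the construction are along identity-on-objects functors. After attaching the objects and atomic $1$-cells, the simplicial category $\mG\cP_0$ is the constant simplicial category on the free $1$-category generated by the underlying directed graph of $\cP$; its atomic $0$-arrows are precisely the edges of $\cP$, and its higher atomic arrows are exactly the degeneracies of these edges. Because the inclusion $\partial\Delta^1 \hookrightarrow \Delta^1$ is a bijection on $0$-simplices, attaching the $2$-cells via the pushout along $\cattwo[\partial\Delta^1]\hookrightarrow\cattwo[\Delta^1]$ does nothing at level $0$, confirming the description of the atomic $0$-arrows of $\mG\cP$.

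For $n \geq 1$, attaching a $2$-cell $\alpha$ contributes new $n$-arrows from $s_\alpha$ to $t_\alpha$ corresponding to those elements of $(\Delta^1)_n$ not already in $(\partial\Delta^1)_n$. At $n=1$ this adds a single non-degenerate arrow, which I identify with $\alpha$; for $n \geq 2$, every such $n$-simplex of $\Delta^1$ is a degenerate image of the unique non-degenerate $1$-simplex, so all newly-added atomic $n$-arrows are degeneracies of the $\alpha$'s. This establishes both the identification of the atomic $1$-arrows as the atomic $2$-cells and the claim that all higher atomic arrows are degenerate. The main obstacle is really just the bookkeeping to confirm that pushouts introduce no hidden relations violating the freeness condition of \cref{defn:computad}; this is handled cleanly by the closure property of the class of simplicial computad inclusions under pushout.
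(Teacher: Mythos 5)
Your proposal is correct and follows essentially the same route as the paper: the paper's proof simply observes that $\mG\cP$ is a cell complex built from the generating Bergner cofibrations $\varnothing\hookrightarrow\catone$, $\cattwo[\partial\Delta^0]\hookrightarrow\cattwo[\Delta^0]$, and $\cattwo[\partial\Delta^1]\to\cattwo[\Delta^1]$, so that the atomic arrows are exactly the freely attached cells. Your levelwise identification of the new atomic $n$-arrows as the surjections $[n]\to[1]$ (all degenerate for $n\geq 2$) is a correct and slightly more explicit account of what the paper leaves implicit.
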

\begin{proof}
The simplicial category $\mG{\cP}$ is built as a cell complex from the generating Bergner cofibrations and so is cofibrant, with the atomic cells freely attached by the maps $\cattwo[\partial\Delta^0] \hookrightarrow \cattwo[\Delta^0]$ and $\cattwo[\partial\Delta^1] \to \cattwo[\Delta^1]$.
\end{proof}

Alternatively we could form the free 2-category $\mF\cP$ on $\cP$ and then apply the nerve $N_* \colon \twoCat \to\sCat$ to produce a simplicial category $N_*\mF\cP$ considered in \cref{prop:pasting-computad}, equipped with natural maps
\[
\aamalg{\text{ob}\cP} \catone \to N_*\mF\cP, \quad \aamalg{\text{edge}\cP} \cattwo \to N_*\mF\cP, \quad 
\aamalg{\text{face}\cP} \cattwo[\Delta^1] \to N_*\mF\cP
\]
inducing a canonical comparison
\begin{equation}\label{eq:natural-comparison}
\mG\cP \to N_*\mF\cP.
\end{equation}
We observe that:

\begin{lem}\label{lem:natural-inclusion} The natural map $\mG\cP \rightarrowtail N_*\mF\cP$ is a simplicial subcomputad inclusion, and hence a cofibration in the model structure for $(\infty,2)$-categories.
\end{lem}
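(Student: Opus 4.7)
The plan is to verify the first claim directly, using the explicit descriptions of atomic arrows in $\mG\cP$ and $N_*\mF\cP$ given by \cref{lem:graph-computad} and \cref{prop:pasting-computad}; the second clause will then follow from the standard fact that subcomputad inclusions are cofibrations. Since both simplicial categories have the vertices of $\cP$ as their common set of objects and the comparison map \eqref{eq:natural-comparison} is the identity on objects by construction, it suffices to check that this map carries atomic $n$-arrows of $\mG\cP$ injectively into atomic $n$-arrows of $N_*\mF\cP$ for every $n \geq 0$.

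In dimension zero, both sides have the edges of $\cP$ as atomic 0-arrows and the comparison is the identity on these. In dimension one, the atomic 1-arrows of $\mG\cP$ are the atomic 2-cells $\alpha$ of $\cP$, each sent to the pair $(\dom_\alpha \geq \cod_\alpha)$ in $N_*\mF\cP$. By the internal disjointness built into \cref{defn:interior-anchorable}, the paths $\dom_\alpha$ and $\cod_\alpha$ share only the endpoints $s_\alpha$ and $t_\alpha$, which is precisely the atomicity condition from \cref{prop:pasting-computad}; moreover distinct atomic 2-cells bound distinct interior faces, so they yield distinct ordered pairs of paths, giving injectivity. For $n \geq 2$, the atomic $n$-arrows of $\mG\cP$ are all degenerate images of atomic 1-arrows (\cref{lem:graph-computad}), and their images under \eqref{eq:natural-comparison} are the corresponding degeneracies of $(\dom_\alpha \geq \cod_\alpha)$ in $N_*\mF\cP$; these remain atomic because the atomicity criterion in \cref{prop:pasting-computad} depends only on the extreme paths $p_0$ and $p_n$, which are unchanged by degeneracies.

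This exhibits \eqref{eq:natural-comparison} as a simplicial subcomputad inclusion. For the final clause, one uses the standard fact that every simplicial subcomputad inclusion may be built as a transfinite composite of pushouts of the generating Bergner cofibrations $\varnothing \hookrightarrow \catone$ and $\cattwo[\partial\Delta^n] \hookrightarrow \cattwo[\Delta^n]$ (see \cite[\textsection16.2]{Riehl:2014ch}) by attaching the atomic arrows one dimension at a time; hence it is a cofibration in Bergner's model structure for $(\infty,1)$-categories on $\sCat$, and therefore also in the model structure for $(\infty,2)$-categories by \cref{def sCat model structure}, whose cofibrations are defined to coincide with those of Bergner. There is no substantive obstacle in this argument; the only delicate point is matching the two atomicity descriptions, and this is arranged precisely by the internal-disjointness clause of the definition of an anchorable interior face.
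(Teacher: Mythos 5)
Your argument is correct and follows essentially the same route as the paper's proof: both compare the atomic arrows of the two simplicial computads via \cref{lem:graph-computad} and \cref{prop:pasting-computad}, observing that the atomic 1-arrows of $\mG\cP$ are horizontally indecomposable (by internal disjointness of $\dom_\alpha$ and $\cod_\alpha$) and hence atomic in $N_*\mF\cP$. You simply spell out a few points the paper leaves implicit, namely injectivity and the behavior of degeneracies in dimensions $n\geq 2$.
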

\begin{proof} 
By \cref{lem:graph-computad} and \cref{prop:pasting-computad} both simplicial categories are simplicial computads with the same objects and the same atomic 0-arrows. The atomic 1-arrows of $\mG \cP$, corresponding to atomic 2-cells of $\cP$, of course define horizontally indecomposable 2-cells of $\mF \cP$, and so are atomic 1-arrows of $N_*\mF \cP$. Thus the natural inclusion is a simplicial subcomputad inclusion, meaning $N_*\mF \cP$ may be built from $\mG \cP$ by attaching the remaining atomic $n$-arrows for $n \geq 1$ along their boundaries.
\end{proof}

Our aim is to prove that the natural map is not only a cofibration, but also a weak equivalence.

\begin{thm}[uniqueness of pasting composition]\label{thm:uniquepasting} The natural map 
\[
\begin{tikzcd}
\mG\cP \arrow[r, "\sim", tail] & N_*{\mF}\cP
\end{tikzcd}
\]
is a trivial cofibration in the model structure for $(\infty,2)$-categories.
\end{thm}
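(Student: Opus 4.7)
The plan is to first reduce the statement to a hom-wise condition. By \cref{lem:natural-inclusion} the map $\mG\cP \to N_*\mF\cP$ is already a cofibration, so only the weak equivalence remains. Since the two simplicial categories share the same set of objects and the map is identity-on-objects, condition \ref{DK2 global equiv} of \cref{def sCat model structure} holds trivially, and it suffices to verify that for every pair of vertices $a, z \in \cP$, the induced map $\mG\cP(a,z) \to N\mF\cP(a,z)$ is a weak categorical equivalence of simplicial sets. I would induct on the number of atomic 2-cells of $\cP$, writing $\cP = \cP' \cup \alpha$ with $\alpha$ attached at the bottom along its domain path from $x$ to $y$ (the dual of \cref{cor:pasting-scheme-presentations}). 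The base case is forced: without atomic 2-cells, $\cP$ is a single directed path and both simplicial categories coincide with the chain poset.

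For the inductive step with $a \peq x \prec y \peq z$ (the other cases being trivial from the inductive hypothesis, as the relevant homs are unchanged), the right-hand side admits by \cref{cor:hom-dwyer-map} an inner anodyne extension
\[
N\mF\cP'(y,z) \times \Delta^1 \times N\mF\cP'(a,x) \aamalg{N\mF\cP'(y,z) \times \Delta^0 \times N\mF\cP'(a,x)} N\mF\cP'(a,z) \xrightarrow{\sim} N\mF\cP(a,z),
\]
which is in particular a Joyal trivial cofibration. My strategy is to produce a parallel pushout description for $\mG\cP(a,z)$, compare the two pushouts term-by-term using the inductive hypothesis (which controls each $\cP'$ factor), and conclude via left properness of the Joyal model structure---applied with the cofibration $\Delta^0 \hookrightarrow \Delta^1$ crossed with identities as the cofibrant leg of each pushout square---that the comparison map between the pushouts is a weak categorical equivalence. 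Post-composing with the inner anodyne extension above then yields the required hom-wise equivalence.

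The split into cases is dictated by the structure of $\cod_\alpha$. Case 1, when $\cod_\alpha$ is a single edge $e$, is the easy one: the passage from $\mG\cP'$ to $\mG\cP$ is a composite of two pushouts, one freely attaching the new $0$-arrow $e$ (via $\catone + \catone \hookrightarrow \cattwo$) and one attaching the atomic $1$-arrow $\alpha$ (via $\cattwo[\partial\Delta^1] \hookrightarrow \cattwo[\Delta^1]$) with vertices $\dom_\alpha$ and $e$. This yields the clean hom-wise pushout description
\[
\mG\cP(a,z) \cong \mG\cP'(y,z) \times \Delta^1 \times \mG\cP'(a,x) \aamalg{\mG\cP'(y,z) \times \Delta^0 \times \mG\cP'(a,x)} \mG\cP'(a,z),
\]
and the strategy outlined above applies directly.

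The main obstacle is Case 2, where $\cod_\alpha$ is a path of length at least two: the construction of $\mG\cP$ now requires first freely attaching all the new interior edges of $\cod_\alpha$ before attaching $\alpha$, so $\mG\cP(a,z)$ no longer admits such a clean single-$\Delta^1$ pushout description. I would handle this by a hom-wise filtration---attaching the new $0$-arrows one at a time and then the atomic $1$-arrow $\alpha$---and verifying that the cumulative attaching map on homs is itself a Joyal trivial cofibration compatible with the inner anodyne extension of \cref{cor:hom-dwyer-map}. A possible shortcut is to reduce Case 2 to Case 1 via an auxiliary pasting scheme $\cP''$ that interpolates between $\cP'$ and $\cP$ by introducing an abstract single-edge representative for $\cod_\alpha$; either way, matching the combinatorial structure of the multi-step $\mG$-side construction with the single-step $N_*\mF$-side pushout provided by the Dwyer map result is where the technical weight of the proof lies.
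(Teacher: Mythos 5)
Your overall architecture---reduce to hom-wise weak categorical equivalences via \cref{lem:natural-inclusion}, induct on the number of atomic $2$-cells by attaching a cell along the bottom, start from the linear-graph base case, and feed in \cref{cor:hom-dwyer-map}---is the paper's. In Case 1 (atomic codomain) your route differs mildly but validly from the paper's: the paper factors $\mG(\cP'\cup\alpha)\rightarrowtail N_*\mF(\cP'\cup\alpha)$ through the mixed pushout $\mG(\cP'\cup\alpha)\amalg_{\mG\cP'}N_*\mF\cP'$, so that the first leg is a pushout in $\sCat$ of the inductively known trivial cofibration (no gluing lemma needed) and only the second leg requires a hom-wise computation via \cref{lem:phil} and \cref{cor:hom-dwyer-map}; you instead compare two parallel hom-wise pushout squares and invoke left properness of the Joyal model structure. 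That works, but note that your pushout description of $\mG\cP(a,z)$ requires the analogue of \cref{lem:phil} with $N_*\mF\cP'$ in place of $\mG\cP'$; this does follow from \cref{lem:one-way-pushout}, since $\mG\cP'$ is levelwise one-way, but it is a verification your sketch elides and the paper's factorization avoids entirely.

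The genuine gap is Case 2, which you correctly flag as the crux but do not resolve. Your proposed ``shortcut'' via an auxiliary scheme with a single abstract codomain edge is in fact the paper's argument, and the idea you are missing is the one that makes it work: if $\cP\cup\alpha$ is obtained from the auxiliary scheme $\cP\cup\alpha'$ by subdividing the edge $e=\cod_{\alpha'}$ into the path $e_1\cdots e_{n-1}$, then the square whose horizontal maps are the two natural comparisons $\mG\to N_*\mF$ and whose vertical maps are induced by the subdivision is a \emph{pushout in $\sCat$} (\cref{cor:inductive-pushout}). This is proved by checking separately that pushout along the active map $\circ\colon\cattwo\to\catn$ computes both $\mG(\cP\cup\alpha)$ and $N_*\mF(\cP\cup\alpha)$ from their unsubdivided versions, using the atomic-arrow description of \cref{prop:pasting-computad} and levelwise one-wayness. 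Once that square is known to be a pushout, Case 2 follows from Case 1 with no further hom-wise matching, because pushouts of trivial cofibrations are trivial cofibrations. Be warned that your default strategy of ``comparing pushouts term-by-term via left properness'' does not transfer to this step: the subdivision functors $N_*\mF(\cP\cup\alpha')\to N_*\mF(\cP\cup\alpha)$ and $\mG(\cP\cup\alpha')\to\mG(\cP\cup\alpha)$ are \emph{not} cofibrations (the former is not a simplicial subcomputad inclusion), so the relevant spans have no cofibrant leg in the direction you would need. Your alternative (a), a hom-wise filtration attaching the new interior edges and then $\alpha$, is the ``direct argument'' the paper asserts is possible but omits as too technical; making it precise requires, beyond the pushout description of the homs between old vertices (which in fact persists verbatim from Case 1), a separate treatment of the homs into and out of the new interior vertices $v_1,\dots,v_{n-2}$ via whiskering isomorphisms, none of which appears in your sketch.
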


The proof of this result occupies the remainder of this section.

\subsection{Base cases and the special inductive step}\label{ssec:base-cases}

\cref{thm:uniquepasting} is easily verified for the following simple pasting schemes from \cref{ex: theta 2}:

\begin{ex}\label{ex:base-case}
When $\cP = [n]([0],\ldots,[0])$ then 
$\mG\cP \cong N_*{\mF}\cP$ is a 1-category discretely embedded as a simplicial category. 
\end{ex}

\begin{ex} When $\cP = [n]([1],\ldots,[1])$, then $\mG\cP \cong N_*{\mF}\cP$ is a simplicial category with objects $0,1,\ldots, n$ and the $k$-cube $(\Delta^1)^k$ as the hom from $i$ to $i+k$, with all other homs empty.
\end{ex}

Interpolating between these two examples, we could consider the objects of $\TTheta_2$ of the form $\cP = [n]([e_1],\ldots, [e_n])$ where each $e_i$ is 0 or 1. As in the previous examples, $\mG\cP \cong N_*{\mF}\cP$ in these cases.

\begin{ex} When $\cP= [1]([n])$, then $\mG\cP$ is the directed suspension of the spine of $\Delta^n$, while $N_*{\mF}\cP = \cattwo[\Delta^n]$. Since the inclusion of the spine is a Joyal trivial cofibration, the map $\begin{tikzcd}[cramped,sep=small]
\mG\cP \arrow[r, "\sim", tail] & N_*{\mF}\cP
\end{tikzcd}$ is a  trivial cofibration by \cref{lem:directed-suspension-adjunction}.
\end{ex}

By \cref{cor:pasting-scheme-presentations}, a generic pasting scheme can be built from these  base cases by iteratively attaching an atomic 2-cell $\alpha$ ``along the bottom,'' meaning that the domain path is identified via an attaching map $\dom_\alpha \colon \cattwo \to \mG\cP$ with a path of 1-cells $\dom_\alpha \subset \cod_{\cP}$ in $\cP$  with source object $x$ and target object $y$. We first consider the special case where the codomain of the attached cell is given by a single atomic 1-cell, rather than a path of atomic 1-cells. By this simplifying assumption $\mG(\cP\cup\alpha)$ is built from $\mG\cP$ by the pushout displayed below-left:
\begin{equation}\label{diagram:GvsNF_po}
\begin{tikzcd} \cattwo[\Delta^0] \arrow[r, "\dom_\alpha"] \arrow[d, tail, "0"'] \arrow[dr, phantom, "\ulcorner" very near end] & \mG\cP \arrow[d, tail] \arrow[r, tail, "\sim"] \arrow[dr, phantom, "\ulcorner" very near end]  & N_*\mF \cP \arrow[d, tail] \arrow[ddr, tail, bend left=10]   \\ \cattwo[\Delta^1] \arrow[r, "{(\alpha,\cod_\alpha)}"'] & \mG(\cP \cup \alpha)\arrow[r, tail, "\sim"] \arrow[drr, bend right=15, tail] & \mG(\cP \cup \alpha) \aamalg{\mG\cP} N_*\mF \cP \arrow[dr, dashed] \\
 & & & N_* \mF (\cP \cup \alpha)
\end{tikzcd}
\end{equation}
Note that all of the simplicial categories that appear here are cofibrant. Moreover, by \cref{rmk:sub-pasting-sub-computad} and direct inspection  the inclusion 
\begin{equation}\label{eq:inductive-comparison}
\mG(\cP \cup \alpha) \aamalg{\mG\cP} N_*{\mF}\cP \rightarrowtail N_*{\mF}(\cP \cup \alpha)
\end{equation} is a simplicial subcomputad inclusion, though we shall not need this fact.

Under the assumption that $\cwtoo{\mG\cP}{N_*\mF\cP}$ is a trivial cofibration, then so is its pushout, which  the map $\mG(\cP \cup \alpha) \rightarrowtail N_*{\mF}(\cP \cup \alpha)$ factors through. So to demonstrate that $\mG(\cP\cup\alpha) \rightarrowtail N_*\mF(\cP\cup\alpha)$ is a trivial cofibration, it remains to prove that the map  \eqref{eq:inductive-comparison} is a weak equivalence in the model structure for $(\infty,2)$-categories. This simplicial functor is identity on objects, so by \cref{def sCat model structure} it suffices to show that for each hom we have a weak categorical equivalence.

We use the following lemma to calculate the homs in the simplicial category $\mG(\cP \cup \alpha) \amalg_{\mG\cP} N_*{\mF}\cP$.

\begin{lem}\label{lem:phil} 
Suppose $x \prec y$ are vertices in a pasting scheme $\cP$. Given a span of simplicial sets $K \xleftarrow{f} J \xrightarrow{g}  N\mF\cP(x,y)$, the simplicial category $\mP$ defined by the pushout\footnote{Here the simplicial functor $g \colon \cattwo[J] \to N_*\mF\cP$ is the transpose of the map $g \colon J \to N\mF\cP(x,y)$ under the adjunction of \cref{lem:directed-suspension-adjunction}; note also by that result that  if $f \colon J \to K$ is a monomorphism, then $\cattwo[f] \colon \cattwo[J] \to \cattwo[K]$ is a cofibration.}
\[
\begin{tikzcd}[column sep=large] \cattwo[J] \arrow[r, "g"] \arrow[d, "{\cattwo[f]}"'] \arrow[dr, phantom, "\ulcorner" very near end] & N_*\mF{\cP} \arrow[d] \\ \cattwo[K] \arrow[r] & \mP
\end{tikzcd}
\] 
has the same objects as $N_*\mF{\cP}$ and has hom-categories defined by the pushouts 
\[
\begin{tikzcd}[column sep=small] N\mF \cP(y,z)\! \times\! J\! \times\! N\mF \cP (a,x) \arrow[r, "g"] \arrow[drr, phantom, "\ulcorner" very near end]\arrow[d, "f"'] & [-1mm] N\mF \cP(y,z)\! \times\! N\mF \cP(x,y)\! \times\! N\mF \cP (a,x) \arrow[r, "\circ"] & [-5pt] N\mF \cP(a,z) \arrow[d] \\N\mF\! \cP(y,z) \times\! K\! \times\! N\mF \cP (a,x)  \arrow[rr] & & \mP(a,z)
\end{tikzcd}
\]
for any pair of  objects $a$ and $z$.
\end{lem}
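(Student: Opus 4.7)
The strategy is to analyze $\mP$ directly by classifying its arrows as zigzags of arrows drawn from $N_*\mF\cP$ and $\cattwo[K]$, then to use acyclicity of pasting schemes to truncate those zigzags. Since $\cattwo[f]$ is the identity on the object set $\{-, +\}$ of both $\cattwo[J]$ and $\cattwo[K]$, and $g$ sends $-,+$ to the distinct objects $x,y$ (distinct because $x \prec y$), the pushout $\mP$ has object set $\ob(N_*\mF\cP)$ with $-$ identified to $x$ and $+$ to $y$; this establishes the first claim of the lemma.

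An arrow of $\mP(a,a')$ is represented by a composable zigzag of arrows, each drawn from $N_*\mF\cP$ or from $K$ (with $K$-arrows always going from $x$ to $y$), modulo the pushout relation identifying the two images of $J$ in $\mP(x,y)$. After composing adjacent $N_*\mF\cP$-factors via the composition in $\mP$, the zigzag reduces to an alternating sequence of $N_*\mF\cP$-arrows and $K$-arrows; but two consecutive $K$-arrows would demand an intermediate arrow in $N\mF\cP(y,x)$, which is empty by acyclicity (\cref{lem:acyclicity}, using $x \prec y$). Consequently each zigzag contains at most one $K$-factor, and every arrow of $\mP(a,a')$ is represented either by an element of $N\mF\cP(a,a')$ or by a triple in $N\mF\cP(y,a') \times K \times N\mF\cP(a,x)$, subject to the identification $(\gamma, f(j), \delta) \sim \gamma \circ g(j) \circ \delta$ for $j \in J$. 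This matches the pushout diagram in the statement; when either $a \not\preccurlyeq x$ or $y \not\preccurlyeq a'$, the triple summand is empty and the formula reduces correctly to $\mP(a,a') = N\mF\cP(a,a')$.

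The main technical obstacle will be making the informal ``no further identifications hold'' point precise. My plan is to formalize the argument by constructing a candidate simplicial category $\mP'$ with $\ob(\mP') = \ob(N_*\mF\cP)$ and hom-simplicial sets given by the stated pushouts, equipping it with a composition law extending that of $N_*\mF\cP$ via the pre- and post-composition action of $N_*\mF\cP$-arrows on the $K$-middle factor (no composition of two new arrows is required, once again by \cref{lem:acyclicity}), and then verifying directly that $\mP'$ satisfies the universal property defining $\mP$. Unitality and associativity on $\mP'$ are inherited from $N_*\mF\cP$, while the universal property should assemble from the hom-level pushout universal properties via the evident maps $N_*\mF\cP \hookrightarrow \mP'$ and $\cattwo[K] \to \mP'$.
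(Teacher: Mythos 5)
Your approach is correct, and it rests on the same key observation as the paper's: acyclicity of the pasting scheme (so that $N\mF\cP(y,x)=\varnothing$ and $N\mF\cP(x,x)\cong\Delta^0$) forces every composite word in the pushout to contain at most one factor coming from $K$, whence the homs are the stated pushouts. Where you differ is in how this is made rigorous. The paper first reduces to a statement about ordinary categories by computing the pushout levelwise in $\Cat^{\DDelta^\op}$, and then proves a standalone lemma (\cref{lem:one-way-pushout}) about pushouts along $\cattwo[S]\to\cattwo[T]$ of ``one-way'' $1$-categories; that lemma is established by factoring the pushout through the category $\Cat_X$ of categories with fixed object set $X$, presenting it as a reflexive coequalizer, and using that the monadic forgetful functor $\Cat_X\to\Set^{X\times X}$ preserves reflexive coequalizers, at which point the normal-form analysis of words finishes the job. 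You instead propose to build the candidate simplicial category $\mP'$ by hand and check its universal property directly. Both routes work: the paper's buys a reusable $1$-categorical lemma and outsources the ``no further identifications'' worry to monadicity, while yours is more self-contained but leaves the universal-property verification (well-definedness of the composition maps on the pushout homs, i.e.\ compatibility with the gluing along $J$, plus unitality and associativity in the mixed old/new cases) as the remaining work---all of which is routine precisely because acyclicity prevents two new factors from ever needing to be composed. Two small points to make explicit in your write-up: either carry out the construction levelwise in $\Cat^{\DDelta^\op}$ (as the paper does) or verify that your composition maps are maps of simplicial sets; and record that the one-way property you invoke holds in every simplicial degree because $(N_*\mF\cP)_0$ is the underlying category of $\mF\cP$, whose hom-posets are governed by \cref{cor:hom-poset}, so emptiness and triviality of the relevant homs propagate to all higher degrees.
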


We will prove this in a moment, after establishing the following related lemma for ordinary categories.

\begin{lem}\label{lem:one-way-pushout}
Let $\cC$ be a small \textbf{one-way} category, that is, 
\begin{enumerate}[label=(\roman*)]
    \item $\cC(x,x) = \{ \id_x \}$ for every object $x$, and \label{eq:onewayid}
    \item the relation $x \peq y$, holding just when $\cC(x,y)$ is inhabited, is anti-symmetric, hence constitutes a partial order on the set of objects of $\cC$. \label{eq:onewayas}
\end{enumerate} 
Suppose $c_0 \prec c_1$ are objects of $\cC$.
 Given a span of sets $T \xleftarrow{f} S \xrightarrow{g} \cC(c_0,c_1)$, the category $\cD$ defined by the pushout
\begin{equation}\label{eq:pushout_one_way} \begin{tikzcd}
\cattwo[S] \rar{g} \dar[swap]{\cattwo[f]} \arrow[dr, phantom, "\ulcorner" very near end] & \cC \dar \\
\cattwo[T] \rar & \cD
\end{tikzcd} \end{equation}
has the same objects as $\cC$ and has hom-sets defined by the pushouts
\begin{equation}\label{eq:pushout_one_way_levels} \begin{tikzcd}[column sep=large]
\cC (c_1,b) \times S \times \cC (a,c_0) \rar{-\circ g(-) \circ -}\dar[swap]{\id \times f \times \id} \arrow[dr, phantom, "\ulcorner" very near end] & \cC (a,b)\dar \\
\cC (c_1,b) \times T \times \cC (a,c_0) \rar & \cD(a,b)
\end{tikzcd} \end{equation}
for every pair of objects $a,b$.
\end{lem}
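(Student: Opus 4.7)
\smallskip

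The plan is to construct a candidate category $\cD'$ whose hom-sets are given by the pushouts in \eqref{eq:pushout_one_way_levels} and to verify it satisfies the universal property of the pushout \eqref{eq:pushout_one_way}. Explicitly, $\cD'$ has the same objects as $\cC$, and for each pair $a,b$, an element of $\cD'(a,b)$ is either a morphism of $\cC(a,b)$ or an equivalence class $[h,t,k]$ of a triple in $\cC(c_1,b) \times T \times \cC(a,c_0)$, under the relation identifying $[h,f(s),k]$ with $h \circ g(s) \circ k$ for $s \in S$. Identities are those of $\cC$; composition is defined by the obvious rules on representatives:
\begin{itemize}
\item for $\phi,\psi \in \cC$, by composition in $\cC$;
\item for $[h,t,k] \in \cD'(a,b)$ and $\psi \in \cC(b,b')$, by $\psi \circ [h,t,k] := [\psi \circ h, t, k]$;
\item for $\phi \in \cC(a',a)$ and $[h,t,k] \in \cD'(a,b)$, by $[h,t,k] \circ \phi := [h,t,k\circ\phi]$.
\end{itemize}

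The main subtlety, and the step I expect to be the principal (if minor) obstacle, is to verify that this exhausts the cases, i.e.\ that we never need to compose two ``new'' morphisms. Suppose $[h,t,k] \in \cD'(a,b)$ and $[h',t',k'] \in \cD'(b,b')$. Their existence forces $c_1 \peq b$ (from the source of $h$) and $b \peq c_0$ (from the target of $k'$), hence $c_1 \peq c_0$ by transitivity. Combined with the assumed $c_0 \prec c_1$, this contradicts antisymmetry \ref{eq:onewayas}, so the case never arises. Well-definedness of the remaining composition rules on equivalence classes is immediate from the definition of the relation, and associativity and unitality reduce to those of $\cC$.

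Once $\cD'$ is seen to be a category, I would construct the two evident functors from the pushout span: the inclusion $\cC \hookrightarrow \cD'$, and $\cattwo[T] \to \cD'$ sending $- \mapsto c_0$, $+ \mapsto c_1$, and $t \in T \mapsto [\id_{c_1}, t, \id_{c_0}]$. Commutativity of the pushout square is exactly the defining relation $[\id_{c_1}, f(s), \id_{c_0}] = g(s)$. For the universal property, given functors $F\colon \cC \to \cE$ and $G\colon \cattwo[T] \to \cE$ agreeing on $\cattwo[S]$, I would define $H\colon \cD' \to \cE$ by $H|_\cC = F$ and $H([h,t,k]) = F(h) \circ G(t) \circ F(k)$; this is well-defined because $G(f(s)) = F(g(s))$, is forced on all of $\cD'$, and is functorial by the same case analysis used for composition in $\cD'$. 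This exhibits $\cD'$ as the pushout $\cD$, and inspection of the construction recovers the description of $\cD(a,b)$ as the pushout \eqref{eq:pushout_one_way_levels}.
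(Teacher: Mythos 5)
Your proof is correct, and it isolates the same combinatorial fact that drives the paper's argument: if $[h,t,k]\colon a\to b$ and $[h',t',k']\colon b\to b'$ were both present, then $c_1\peq b\peq c_0$ would contradict $c_0\prec c_1$ together with antisymmetry, so no two ``new'' morphisms are ever composable and every arrow of the pushout admits a normal form. The route is genuinely different, however. Rather than building a candidate category and verifying the universal property by hand, the paper first enlarges $\cattwo[S]$ and $\cattwo[T]$ by the remaining objects so that the whole span lives in the category $\Cat_X$ of categories with fixed object set $X$ and identity-on-objects functors, presents the pushout there as a reflexive coequalizer, and invokes the fact that the monadic forgetful functor $\Cat_X\to\Set^{X\times X}$ preserves reflexive coequalizers; this reduces the entire computation to hom-sets at once, with the one-way hypothesis entering only to give the unique normal form ($q\circ t\circ p$ with $t\in T$, or an arrow of $\cC$) in the intermediate coproduct. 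What the paper's route buys is that associativity, functoriality, and the universal property never need to be checked explicitly---they are absorbed into the monadicity argument---whereas your route buys elementarity and self-containedness at the price of the case analysis you sketch. One point worth making explicit if you write this up: since $f$ and $g$ need not be injective, the relation generated by $[h,f(s),k]\sim h\circ g(s)\circ k$ can also identify distinct morphisms of $\cC(a,b)$ with one another, so your description of elements as ``either a morphism of $\cC$ or a class of a triple'' should be read as a description of representatives rather than a disjoint decomposition; this is harmless for your argument because the composition rules visibly preserve the generating relations.
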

\begin{proof}
Let $X = X' \amalg \{c_0, c_1\}$ be the set of objects of $\cC$, and let $\Cat_X$ be the category of small categories having object set $X$ and identity-on-objects functors between such.
Observe that the pushout \eqref{eq:pushout_one_way} factors as
\[ \begin{tikzcd}
\cattwo[S] \rar \dar[swap]{\cattwo[f]} \arrow[dr, phantom, "\ulcorner" very near end] & \cattwo[S]\amalg X' \rar \dar{\cattwo[f] \amalg \id}  \arrow[dr, phantom, "\ulcorner" very near end] &  \cC \dar \\
\cattwo[T] \rar & \cattwo[T]\amalg X' \rar & \cD
\end{tikzcd} \]
where both squares are pushouts in $\Cat$.
We regard the right-square as a pushout in $\Cat_X$, which can be presented as the reflexive coequalizer
\begin{equation}\label{eq:Xcat_coequalizer} \begin{tikzcd}
(\cattwo[T]\amalg X') \aamalg{X} (\cattwo[S]\amalg X')  \aamalg{X} \cC \rar[shift left] \rar[shift right] & (\cattwo[T]\amalg X')  \aamalg{X} \cC \rar & \cD.
\end{tikzcd} \end{equation}
As the monadic forgetful functor $\Cat_X \to \Set^{X\times X}$ preserves reflective coequalizers, it suffices to calculate \eqref{eq:Xcat_coequalizer} in the latter category.
Using that $\cC$ is a one-way category and $c_0 \prec c_1$, each morphism in $(\cattwo[T]\amalg X')  \amalg_{X} \cC$ from $a$ to $b$ can be written uniquely in exactly one of the following forms
\[ \begin{tikzcd}
a \rar{p} & c_0 \rar{t} & c_1 \rar{q} & b & \text{or} & a \rar{r} & b
\end{tikzcd} \]
where $p,q,r$ are morphisms of $\cC$ and $t$ is an element of $T$.
The left category of \eqref{eq:Xcat_coequalizer} additionally adds morphisms of the form 
\[ \begin{tikzcd}
a \rar{p} & c_0 \rar{s} & c_1 \rar{q} & b
\end{tikzcd} \]
where $s\in S$.
Such a morphism is taken to 
\[ \begin{tikzcd}
a \rar{p} & c_0 \rar{f(s)} & c_1 \rar{q} & b & \text{resp.} & a \rar{q\circ g(s) \circ p} &[2.5em] b
\end{tikzcd} \]
by the parallel functors, hence \eqref{eq:pushout_one_way_levels} is a pushout. 
\end{proof}

\begin{proof}[Proof of \cref{lem:phil}]
Pushouts in simplicial categories can be computed levelwise in $\Cat^{\DDelta^\op}$, and we will apply \cref{lem:one-way-pushout} in each simplicial degree.
We merely need to observe that for each simplicial level $n$, the category $(N_*\mF{\cP})_n$ is a one-way category.
This is true when $n=0$ since $(N_*\mF{\cP})_0$ is just the underlying 1-category of $\mF{\cP}$ by  \cref{prop:pasting-computad}.
This implies condition \ref{eq:onewayas} holds for all $n$, as the simplicial set $N_*\mF{\cP}(x,y)$ has vertices $(N_*\mF{\cP})_0(x,y)$, so $(N_*\mF{\cP})_0(x,y)$ being empty implies $(N_*\mF{\cP})_n(x,y)$ is empty as well.
On the other hand, $\mF{\cP}(x,x)$ is the terminal category by \cref{cor:hom-poset}. 
Hence $(N_*\mF{\cP})_n(x,x) = N(\mF{\cP}(x,x))_n$ is a one-point set for every $n$, so condition \ref{eq:onewayid} holds for each $n$. 
\end{proof}

In particular, by \eqref{diagram:GvsNF_po} and \cref{lem:phil}, the hom in $\mG(\cP \cup \alpha) \amalg_{\mG\cP} N_*{\mF}\cP$ from $x$ to $y$ is $N\mF\cP(x,y)\amalg_{\Delta^0} \Delta^1$, where the 1-simplex is glued along its domain to the path $p$. By contrast, by  \cref{lem:cell-at-the-bottom}, $\mF (\cP \cup \alpha)(x,y) \cong \mF \cP(x,y) \amalg_\catone \cattwo$, so $N\mF(\cP\cup\alpha)(x,y) \cong N(\mF\cP(x,y) \amalg_\catone\cattwo)$. Using the fact that the inclusion of the initial object $\catone\hookrightarrow\cattwo$ is a Dwyer map, by \cref{AnodyneDwyer}  it follows that the map
\[N\mF \cP(x,y)\aamalg{\Delta^0} \Delta^1 \to N (\mF \cP(x,y) \aamalg\catone\cattwo)\]
is a trivial cofibration in the Joyal model structure.
By a similar argument:

\begin{cor}\label{cor:special-inductive-step} For any pasting scheme $\cP$, suppose the natural map $\mG\cP \rightarrowtail N_*\mF\cP$ is a weak equivalence in the model structure for $(\infty,2)$-categories.
Then the natural map 
\[\mG(\cP \cup \alpha) \aamalg{\mG\cP} N_*{\mF}\cP \rightarrowtail N_*{\mF}(\cP \cup \alpha)\]
defined by attaching an atomic $2$-cell $\alpha$ with atomic codomain along a path $\dom_\alpha \subset \cod_\cP$ is a weak equivalence in the model structure for $(\infty,2)$-categories. Thus,
\[
\begin{tikzcd} \mG(\cP \cup \alpha)  \arrow[r, tail, "\sim"] & N_*{\mF}(\cP \cup \alpha)\end{tikzcd}\]
is a trivial cofibration in the model structure for $(\infty,2)$-categories.
\end{cor}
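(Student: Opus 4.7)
The plan is to prove the weak equivalence claim first, and deduce the trivial cofibration claim by a standard two-out-of-three argument. Granting the weak equivalence of \eqref{eq:inductive-comparison}, the composition
\[
\mG(\cP \cup \alpha) \rightarrowtail \mG(\cP \cup \alpha) \aamalg{\mG\cP} N_*\mF\cP \wto N_*\mF(\cP \cup \alpha)
\]
in \eqref{diagram:GvsNF_po} is already known, by \cref{lem:natural-inclusion}, to agree with the cofibration $\mG(\cP \cup \alpha) \rightarrowtail N_*\mF(\cP \cup \alpha)$. Its first factor is the pushout of the hypothesized trivial cofibration $\cwtoo{\mG\cP}{N_*\mF\cP}$ and so is itself a trivial cofibration; combined with the weak equivalence of the second factor, two-out-of-three upgrades the composite to a weak equivalence, hence a trivial cofibration.

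To prove the weak equivalence of \eqref{eq:inductive-comparison}, observe that this functor is the identity on objects, so condition \ref{DK2 global equiv} of \cref{def sCat model structure} is automatic. It remains to verify \ref{DK2 local equiv}: the hom-wise map is a Joyal weak equivalence for each pair of objects $a,z$.

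For the hom-wise computation, I would apply \cref{lem:phil} to the left-hand pushout of \eqref{diagram:GvsNF_po}, taking $J = \Delta^0$, $K = \Delta^1$, $f$ the vertex inclusion $0 \colon \Delta^0 \hookrightarrow \Delta^1$, and $g$ classifying $\dom_\alpha \in N\mF\cP(x,y)_0$. This identifies the hom of $\mG(\cP \cup \alpha) \aamalg{\mG\cP} N_*\mF\cP$ from $a$ to $z$ with the source of the map appearing in \cref{cor:hom-dwyer-map}; that corollary then concludes that the comparison to $N\mF(\cP \cup \alpha)(a,z)$ is inner anodyne. The edge cases in which $a \not\preccurlyeq x$ or $y \not\preccurlyeq z$ are free: one of the factors $N\mF\cP(a,x)$, $N\mF\cP(y,z)$ is empty, and by \cref{prop:composition-pushout} both sides collapse to $N\mF\cP(a,z)$, giving an identity map. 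The main work is therefore already packaged in \cref{lem:phil} and \cref{cor:hom-dwyer-map}, and I do not anticipate any further obstacles beyond bookkeeping.
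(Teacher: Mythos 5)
Your proposal is correct and follows essentially the same route as the paper's proof: reduce to the hom-wise statement via identity-on-objects, compute the homs of the pushout with \cref{lem:phil} applied to the span $\Delta^1 \hookleftarrow \Delta^0 \xrightarrow{\dom_\alpha} N\mF\cP(x,y)$, invoke \cref{cor:hom-dwyer-map} to get an inner anodyne map on homs, and conclude the trivial-cofibration claim from the factorization in \eqref{diagram:GvsNF_po}. The only cosmetic differences are the order of presentation and your explicit treatment of the degenerate cases $a \not\preccurlyeq x$ or $y \not\preccurlyeq z$, which the paper absorbs into the cited \cref{prop:composition-pushout} and \cref{cor:hom-dwyer-map}.
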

\begin{proof}
As this functor is identity-on-objects, it suffices to argue that it defines a Joyal trivial cofibration on homs for any $a,z \in \cP$. Applying \cref{lem:phil} to the pushout
\[
\begin{tikzcd} \cattwo[\Delta^0] \arrow[r, "\dom_\alpha"] \arrow[d, tail, "0"'] \arrow[dr, phantom, "\ulcorner" very near end]  &N_*\mF \cP \arrow[d, tail]    \\ \cattwo[\Delta^1] \arrow[r] & \mG(\cP \cup \alpha) \aamalg{\mG\cP} N_*\mF \cP 
\end{tikzcd}
\]
we see that the hom-spaces in $\mP \coloneqq \mG(\cP \cup \alpha) \amalg_{\mG\cP} N_*\mF \cP$ are defined by the pushouts
\[
\begin{tikzcd}[column sep=huge] N\mF \cP(y,z) \times \Delta^0 \times N\mF \cP (a,x) \arrow[dr, phantom, "\ulcorner" very near end]\arrow[d, "0"']  \arrow[r, "-\circ\dom_\alpha \circ -"] & [-5pt] N\mF \cP(a,z) \arrow[d] \\N\mF \cP(y,z) \times\Delta^1 \times N\mF \cP (a,x)  \arrow[r] &  \mP(a,z).
\end{tikzcd}
\]

Meanwhile, by \cref{lem:cell-at-the-bottom} and \cref{prop:composition-pushout} the hom-category $\mF(\cP\cup\alpha)(a,z)$ is defined by the pushout
\[
\begin{tikzcd}[column sep=large] \mF \cP(y,z) \times \catone \times \mF \cP (a,x) \arrow[dr, phantom, "\ulcorner" very near end]\arrow[d, "0"']  \arrow[r, "-\circ\dom_\alpha \circ -"] & [-5pt] \mF \cP(a,z) \arrow[d] \\\mF \cP(y,z) \times\cattwo\times \mF \cP (a,x)  \arrow[r] &  \mF(\cP\cup\alpha)(a,z).
\end{tikzcd}
\]
Thus, the map of hom simplicial sets
\[(\mG(\cP \cup \alpha) \aamalg{\mG\cP} N_*{\mF}\cP)(a,z) \to N_*{\mF}(\cP \cup \alpha)(a,z)\]
is the induced map in \cref{cor:hom-dwyer-map} and is an inner anodyne extension and in particular a Joyal trivial cofibration, as desired. 

Finally, by \eqref{diagram:GvsNF_po}, the cofibration $\mG(\cP\cup\alpha) \rightarrowtail N_*\mF(\cP\cup\alpha)$ has been expressed as a composite of weak equivalences, and thus must be one as well.
\end{proof}

\subsection{The general inductive step}\label{ssec:general-inductive-step}

We now build on our work in the previous section to prove the generalized inductive step, which we use to prove \cref{thm:uniquepasting}. 

Let $\cP'$ be a pasting scheme, and let $x \xrightarrow{e} y$ be any edge.
Define a new pasting scheme $\cP$ by replacing $e$ by a path of edges
\[ x \xrightarrow{e_1} v_1 \xrightarrow{e_2} v_2 \to \cdots\to v_{n-2}\xrightarrow{e_{n-1}} y,\]
without changing any of the other data of the pasting scheme, such as the image of the planar embedding or the names of the atomic 2-cells.

This modification has a relatively modest effect on the free 2-categories generated by these pasting schemes, which may be summarized by observing that 
\[ \begin{tikzcd}
\cattwo \rar{e} \arrow[d, "\circ"'] \arrow[dr, phantom, "\ulcorner" very near end] &
    \mF\cP'
     \dar \\
\catn \rar[swap]{e_1\cdots e_{n-1}} & 
    \mF\cP
\end{tikzcd} \]
is a pushout in $\twoCat$, where $\circ \colon \cattwo \to \catn$ is the unique ``active'' map, which classifies the maximal composite in $\catn$. Observe also that the right vertical map is fully faithful, and if $u$ is a vertex in $\cP'$ then we have isomorphisms of hom-categories $ \mF \cP'(u,x) \cong \mF \cP(u,v_i)$ and $\mF \cP'(y,u) \cong \mF \cP(v_i,u)$ implemented by post- and pre-whiskering with the paths of edges $e_1\cdots e_i \colon x \to v_i$ and $e_{i+1}\cdots e_{n-1} \colon v_i \to y$.

By \cref{prop:pasting-computad} we may extend these observations to show:

\begin{lem}
The  diagram
\[ \begin{tikzcd}
\cattwo \rar{e} \arrow[d, "\circ"'] \arrow[dr, phantom, "\ulcorner" very near end] &
    N_*\mF\cP' \dar \\
\catn \rar[swap]{e_1\cdots e_{n-1}} & 
    N_*\mF\cP
\end{tikzcd} \]
is a pushout in $\sCat$.
\end{lem}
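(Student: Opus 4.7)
The plan is to verify the universal property of the pushout in $\sCat$ directly, by identifying the atomic data of $N_*\mF\cP$ in terms of that of $N_*\mF\cP'$ and $\catn$. The object-set pushout is immediate: adjoining the $n-2$ new interior vertices $v_1, \ldots, v_{n-2}$ to $\ob \cP'$ yields $\ob \cP$.

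The essential combinatorial input is that each new vertex $v_i$ has a unique incoming edge $e_i$ and a unique outgoing edge $e_{i+1}$, so by \cref{lem:acyclicity} any directed path in $\cP$ meeting one of the new edges $e_1, \ldots, e_{n-1}$ must traverse the entire chain in order from $x$ to $y$. From this and the pushout description of $\mF\cP$ in $\twoCat$ already recalled, one reads off explicit hom-category isomorphisms: $\mF\cP(a,b) \cong \mF\cP'(a,b)$ for old $a,b$; $\mF\cP(a, v_i) \cong \mF\cP'(a,x)$ and $\mF\cP(v_i, b) \cong \mF\cP'(y,b)$ via whiskering with the appropriate portion of the new path; and $\mF\cP(v_i, v_j)$ is terminal if $i \leq j$ and empty otherwise. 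Applying $N$ hom-wise and combining with \cref{prop:pasting-computad}, this yields a description of the atomic arrows of $N_*\mF\cP$: the atomic $0$-arrows are those of $N_*\mF\cP'$ with $e$ replaced by the sequence $e_1, \ldots, e_{n-1}$, while any atomic $k$-arrow for $k \geq 1$ has both source and target among the old vertices (any non-degenerate such sequence starting at $v_i$ would have every constituent path beginning with $e_{i+1}$ and so would factor horizontally at $v_{i+1}$, and symmetrically for target) and either corresponds via the hom-iso to an atomic $k$-arrow of $N_*\mF\cP'$ distinct from the one degenerate on $e$, or is the iterated degeneracy of some $e_j$.

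To verify the universal property, suppose $\mathcal{D}$ is a simplicial category and $F \colon N_*\mF\cP' \to \mathcal{D}$, $G \colon \catn \to \mathcal{D}$ are simplicial functors agreeing on $\cattwo$. A candidate simplicial functor $H \colon N_*\mF\cP \to \mathcal{D}$ is forced on atomic data: send old objects and atomic $0$-arrows other than $e$ via $F$; send new vertices $v_i$ and new atomic $0$-arrows $e_i$ via $G$; and send atomic $k$-arrows for $k \geq 1$ with old source and target via $F$ through the hom-isomorphism. Uniqueness is immediate since every arrow of $N_*\mF\cP$ is a composite of (iterated degeneracies of) atomic arrows. Compatibility of $H$ with composition and the simplicial identities reduces to compatibility of the hom-isomorphisms with composition (a consequence of the $\twoCat$-pushout structure $\mF\cP \cong \mF\cP' \sqcup_\cattwo \catn$), together with the single equation $F(e) = G(e_{n-1}) \circ \cdots \circ G(e_1)$ supplied by commutativity of the outer square. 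This last equation ensures, for instance, that the atomic $k$-arrow of $N_*\mF\cP'$ degenerate on $e$ is sent to the same $k$-arrow in $\mathcal{D}$ whether computed via $F$ directly or via the horizontal composite of the $H(e_i)$'s. The main subtlety of the bookkeeping is tracking this mixed $F$/$G$ behavior at the boundary of atomic $k$-arrows whose source or target path contains the new chain $e_1 \cdots e_{n-1}$.
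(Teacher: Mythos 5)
Your proof is correct, but it is organized differently from the paper's. The paper checks the pushout levelwise in $\Cat^{\DDelta^{\op}}$: in each simplicial degree $k$ the category of $k$-arrows is one-way, so the pushout that factors the (degenerate image of the) generator $e$ as $e_1\cdots e_{n-1}$ has hom-sets computable exactly as in \cref{lem:one-way-pushout}, and these are matched against the hom-sets of $(N_*\mF\cP)_k$ via the whiskering isomorphisms; compatibility with the simplicial operators comes for free from the naturality of those identifications. You instead verify the universal property directly in $\sCat$, leaning harder on the simplicial computad structure of \cref{prop:pasting-computad}: you classify the atomic $k$-arrows of $N_*\mF\cP$ (the degeneracies of the $e_j$, plus those with old endpoints, which biject with the atomic $k$-arrows of $N_*\mF\cP'$ other than the degeneracy of $e$), read off the forced values of the induced functor $H$ on atomic generators, and reduce everything to the single relation $F(e)=G(e_{n-1})\circ\cdots\circ G(e_1)$. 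Both arguments rest on the same combinatorial inputs (the hom-category isomorphisms and \cref{prop:pasting-computad}); the trade-off is that your route must confront the face maps directly, since faces do not preserve atomic arrows --- the point you flag as ``bookkeeping'' is precisely that a face of an atomic arrow decomposes into atomics whose chain-blocks $e_1,\dots,e_{n-1}$ must collapse to $e$ consistently under $H$ versus $F$, and the gluing equation is what makes this work. That step is compressed but sound. One wording slip: you assert that \emph{every} atomic $k$-arrow for $k\geq 1$ has old source and target, which is contradicted by your own second alternative (the iterated degeneracy of $e_j$ has a new vertex as an endpoint whenever $1<j$ or $j<n-1$); the parenthetical makes clear you mean this only for the non-degenerate atomics, and the rest of the argument uses the correct statement.
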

\begin{proof}
As in the proof of \cref{lem:phil}, we can check this by making sure it is true for the categories in each simplicial degree $k$. In degree $k$, the pushout is formed by factoring the atomic $k$-arrow $e$ as $e_1\cdots e_{n-1}$. Since the categories of $k$-arrows are one-way, the effect of this factorization on hom-sets is easy to describe. By \cref{prop:pasting-computad}, the isomorphisms of hom-categories just described induce isomorphisms
\begin{align*}
N(\mF\cP (a,z))_k &\cong N(\mF\cP'(a,z))_k & 
    N(\mF\cP (a,v_i))_k &\cong N(\mF\cP'(a,x))_k \\
N(\mF\cP (v_i,v_j))_k &\cong \{e_{i+1}\cdots e_j\} & 
    N(\mF\cP (v_i,z))_k &\cong N(\mF\cP'(y,z))_k \\
N(\mF\cP (v_j,v_i))_k &= \varnothing & 
    N(\mF\cP (v_i,v_i))_k &\cong \{v_i\}
\end{align*}
where $a$ and $z$ are vertices of $\cP'$ and $1\leq i < j \leq n-2$. The claim follows.
\end{proof}

Consulting \cref{defn:graph-of-pasting-scheme}, we find another pushout of simplicial categories.
\begin{lem}
The diagram
\[ \begin{tikzcd}
\cattwo \rar{e} \arrow[d, "\circ"'] \arrow[dr, phantom, "\ulcorner" very near end] &
    \mG\cP' \dar \\
\catn \rar[swap]{e_1\cdots e_{n-1}} & 
    \mG\cP
\end{tikzcd} \]
is a pushout in $\sCat$.
\end{lem}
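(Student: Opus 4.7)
The plan is to decompose each of $\mG\cP$ and $\mG\cP'$ according to \cref{defn:graph-of-pasting-scheme} into two stages: first the construction of the underlying ``1-skeleton'' (steps (i) and (ii)), and second the attachment of the atomic 2-cells (step (iii)). Writing $\mG\cP_{\leq 1}$ and $\mG\cP'_{\leq 1}$ for the discrete simplicial categories obtained after the first stage---which are just the free 1-categories on the underlying directed graphs of $\cP$ and $\cP'$---I will exhibit the desired square as the horizontal pasting of two pushout squares:
\[ \begin{tikzcd}
\cattwo \rar{e} \arrow[d, "\circ"'] \arrow[dr, phantom, "\ulcorner" very near end] &
    \mG\cP'_{\leq 1} \dar \arrow[r] \arrow[dr, phantom, "\ulcorner" very near end] &
    \mG\cP' \dar \\
\catn \rar &
    \mG\cP_{\leq 1} \arrow[r] &
    \mG\cP.
\end{tikzcd} \]

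The left square is the easier step. Since the inclusion of $\Cat$ into $\sCat$ as discrete simplicial categories is left adjoint to the functor sending a simplicial category $\mathcal{D}$ to the ordinary category with morphisms $\mathcal{D}(x,y)_0$, this functor preserves colimits, so it suffices to check that the square is a pushout in $\Cat$. There, it is the familiar observation that subdividing a single edge of the underlying directed graph of $\cP'$ into a path of $n-1$ composable edges is precisely the operation of amalgamating $\mG\cP'_{\leq 1}$ with $\catn$ over $\cattwo$.

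For the right square, the key observation is that $\cP$ and $\cP'$ have the same set of atomic 2-cells, and the boundary attaching data $\coprod_{\text{face}\cP} \cattwo[\partial\Delta^1] \to \mG\cP_{\leq 1}$ is obtained from $\coprod_{\text{face}\cP'} \cattwo[\partial\Delta^1] \to \mG\cP'_{\leq 1}$ by post-composition with the structure map $\mG\cP'_{\leq 1} \to \mG\cP_{\leq 1}$: substituting $e_1\cdots e_{n-1}$ for $e$ in any source or target path is precisely how this map acts. Hence, attaching 2-cells to $\mG\cP_{\leq 1}$ gives the same simplicial category as first attaching them to $\mG\cP'_{\leq 1}$ to form $\mG\cP'$ and then pushing out along $\mG\cP'_{\leq 1} \to \mG\cP_{\leq 1}$. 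Pasting the two pushouts then yields the claim.

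The main obstacle will be the bookkeeping in the right square---verifying that the attaching maps for the atomic 2-cells of $\cP$ and $\cP'$ really are related by composition with $\mG\cP'_{\leq 1} \to \mG\cP_{\leq 1}$. Once the substitution $e \leftrightarrow e_1 \cdots e_{n-1}$ is made explicit and one recalls that $\dom_\alpha$ and $\cod_\alpha$ depend on $\cP$ versus $\cP'$ only through this substitution, the verification is routine.
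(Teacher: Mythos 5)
Your proof is correct and follows essentially the same route as the paper's: both decompose the square into a $1$-skeletal pushout followed by a $2$-cell-attachment pushout (using that $\cP$ and $\cP'$ have the same atomic $2$-cells and that their attaching maps differ only by the substitution $e \mapsto e_1\cdots e_{n-1}$), and then paste. The only cosmetic difference is that you verify the $1$-skeletal square directly in $\Cat$ and transport it along the colimit-preserving discrete inclusion, whereas the paper deduces that square from the previously established pushout for $N_*\mF$ by passing to underlying $1$-categories.
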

\begin{proof}
The first layer of the construction of \cref{defn:graph-of-pasting-scheme} builds the free category $\mG\cP_0$ associated to the underlying graph of the pasting scheme $\cP$. This recovers the underlying 1-category of the 2-category $\mF\cP$. Thus, by passing to underlying 1-categories, either of the pushouts above give rise to a pushout
\[ \begin{tikzcd}
\cattwo \rar{e} \arrow[d, "\circ"'] \arrow[dr, phantom, "\ulcorner" very near end] &
    \mG\cP'_0 \dar \\
\catn \rar[swap]{e_1\cdots e_{n-1}} & 
    \mG\cP_0.
\end{tikzcd} \]

As $\cP'$ and $\cP$ have the same atomic 2-cells, we have the following diagram
\[ \begin{tikzcd}
& \coprod \cattwo[\partial \Delta^1] \arrow[r] \dar & \coprod \cattwo[\Delta^1] \dar \\
\cattwo \rar \dar["\circ"'] \arrow[dr, phantom, "\ulcorner" very near end] &  \mG \cP'_0  \rar \dar  & \mG \cP' \dar \\
\catn \rar &   \mG\cP_0  \rar  & \mG\cP.
\end{tikzcd} \]
We just argued that the left square is a pushout. 
The top square and the right rectangle are pushouts by definition of $\mG\cP'$ and $\mG\cP$, respectively.
Hence the bottom right square is a pushout, which implies that the bottom rectangle is a pushout as well.
\end{proof}

By combining the previous two lemmas, we conclude:
\begin{cor}\label{cor:inductive-pushout}
The natural inclusions assemble into a pushout square in $\sCat$:
\[ \begin{tikzcd}
\mG \cP' \dar \arrow[r, tail] \arrow[dr, phantom, "\ulcorner" very near end] & N_*\mF\cP' \dar \\
\mG \cP \arrow[r, tail] & N_*\mF\cP.
\end{tikzcd} \]
\end{cor}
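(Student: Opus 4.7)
The plan is to apply the pasting law for pushouts. The two preceding lemmas give two pushout squares that share a common edge, namely the map $\mG\cP' \to \mG\cP$ on the bottom of one and the map $\cattwo \to \mG\cP'$ on the right of the other. I would assemble them into a single rectangle in $\sCat$
\[
\begin{tikzcd}
\cattwo \ar[r, "e"] \ar[d, "\circ"'] \ar[dr, phantom, "\ulcorner" very near end] & \mG\cP' \ar[r] \ar[d] \ar[dr, phantom, "\ulcorner" very near end] & N_*\mF\cP' \ar[d] \\
\catn \ar[r, "e_1\cdots e_{n-1}"'] & \mG\cP \ar[r] & N_*\mF\cP,
\end{tikzcd}
\]
in which the horizontal composites $\cattwo \to N_*\mF\cP'$ and $\catn \to N_*\mF\cP$ are precisely the maps appearing in the first of the two preceding lemmas, since $\mG\cP'\to N_*\mF\cP'$ carries the atomic $1$-cell $e$ of $\cP'$ to the corresponding edge of $N_*\mF\cP$, and similarly for the atomic path $e_1 \cdots e_{n-1}$.

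By the second preceding lemma the left-hand square is a pushout, and by the first preceding lemma the outer rectangle is a pushout. The pasting law for pushouts then implies that the right-hand square is a pushout, which is exactly the claim. The only point requiring care is checking commutativity and naturality of the comparison map $\mG\cP \to N_*\mF\cP$, but this follows immediately from the construction of \cref{defn:graph-of-pasting-scheme} together with the universal properties of the two pushouts of which $\mG\cP$ and $N_*\mF\cP$ are defined. No substantive obstacle is expected; the argument is essentially a bookkeeping consequence of the pasting lemma applied to the two previously established pushouts.
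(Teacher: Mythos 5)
Your proposal is correct and is essentially identical to the paper's own proof: both observe that the left square and the outer rectangle are the two previously established pushouts and conclude by the cancellation (pasting) law that the right square is a pushout. No further comment is needed.
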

\begin{proof}
The outer rectangle and left square of the following are pushouts, and thus so is the right-hand square:
\[ \begin{tikzcd}
\cattwo \rar{e} \arrow[d, "\circ"']  \arrow[dr, phantom, "\ulcorner" very near end] &
    \mG \cP' \dar \arrow[r, tail]  & N_*\mF\cP' \dar \\
\catn \rar[swap]{e_1\cdots e_{n-1}} & \mG \cP \rar &
    N_*\mF\cP.
\end{tikzcd} \]
\par \vspace{-1.3\baselineskip} \qedhere
\end{proof}

Note the vertical functors in \cref{cor:inductive-pushout} are \emph{not} cofibrations in the model structure for $(\infty,2)$-categories: because $\cP'$ does not define a sub pasting scheme of $\cP$, the simplicial category $N_*\mF\cP'$ is not a simplicial subcomputad of $N_*\mF\cP$. Nevertheless we can apply this result to deduce the general inductive step from the special case of  \cref{cor:special-inductive-step}.

\begin{cor}\label{cor:general-inductive-step} For any pasting scheme $\cP$, suppose the natural map $\mG\cP \rightarrowtail N_*\mF\cP$ is a weak equivalence in the model structure for $(\infty,2)$-categories.
Then the natural map 
\[
\begin{tikzcd} \mG(\cP \cup \alpha)  \arrow[r, tail, "\sim"] & N_*{\mF}(\cP \cup \alpha)\end{tikzcd}\]
defined by attaching an atomic $2$-cell $\alpha$ along a path $\dom_\alpha \subset \cod_\cP$ is a trivial cofibration in the model structure for $(\infty,2)$-categories.
\end{cor}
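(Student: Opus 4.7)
The plan is to deduce the general inductive step from the special one, \cref{cor:special-inductive-step}, by a subdivision argument. I would first introduce the auxiliary pasting scheme $\cP \cup \alpha'$ obtained by attaching a 2-cell $\alpha'$ with $\dom_{\alpha'} = \dom_\alpha$ and $\cod_{\alpha'}$ a single new atomic edge $f \colon x \to y$; then \cref{cor:special-inductive-step} gives a trivial cofibration $\cwtoo{\mG(\cP\cup\alpha')}{N_*\mF(\cP\cup\alpha')}$. Recognizing that $\cP\cup\alpha$ is the subdivision of $\cP\cup\alpha'$ obtained by replacing $f$ by the path $f_1\cdots f_n = \cod_\alpha$, I apply \cref{cor:inductive-pushout} to obtain the pushout square
\[ \begin{tikzcd}
\mG(\cP\cup\alpha') \dar \arrow[r, tail, "\sim"] \arrow[dr, phantom, "\ulcorner" very near end] & N_*\mF(\cP\cup\alpha') \dar \\
\mG (\cP\cup\alpha) \arrow[r, tail] & N_*\mF(\cP\cup\alpha).
\end{tikzcd} \]
The bottom arrow is already a cofibration by \cref{lem:natural-inclusion}, so only its being a weak equivalence remains.

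Since the bottom comparison is identity-on-objects, by \cref{def sCat model structure} it suffices to verify that it induces Joyal weak equivalences on each hom simplicial set. I would split this into two cases: (i) both $a$ and $b$ are vertices of $\cP$ (hence of $\cP\cup\alpha'$); or (ii) at least one of $a,b$ is a new vertex $v_i$ introduced by the subdivision.

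In case (i) my plan is to show that both vertical maps of the square induce \emph{isomorphisms} on $(a,b)$-hom simplicial sets, so that the bottom hom-map is canonically identified with the top one, which is a Joyal weak equivalence by \cref{cor:special-inductive-step}. Since the pushout in $\sCat$ is computed at each simplicial degree in $\Cat$, and each level of the relevant simplicial categories is one-way (acyclic), this is an extension of \cref{lem:one-way-pushout} to pushouts along $\cattwo \to \catn$ that enlarge the set of objects: the key observation is that any morphism between old objects in such a pushout must be representable by one in the source category, because any excursion $x = v_0 \to v_1 \to \cdots \to v_{n-1} = y$ through the new chain is collapsed, via the imposed relation $e_1\cdots e_{n-1} = f$, to a single occurrence of $f$. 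In case (ii), acyclicity forces incoming and outgoing routes at each $v_i$ to be unique: on the free 2-category side, \cref{cor:sub-pasting-scheme} gives $N_*\mF(\cP\cup\alpha)(a, v_i) \cong N\mF\cP(a,x)$, while on the other side a direct inspection of $\mG(\cP\cup\alpha)$ yields $\mG(\cP\cup\alpha)(a, v_i) \cong \mG\cP(a,x)$, with both identifications compatible with the natural comparison; the hom-map thus reduces to $\mG\cP(a,x) \to N\mF\cP(a,x)$, which is a Joyal weak equivalence by the inductive hypothesis on $\cP$. Symmetric arguments dispatch the remaining sub-cases ($a = v_i$ with $b$ original, or both $a$ and $b$ new, where the hom is $\Delta^0$ or empty).

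The principal obstacle will be the level-wise bookkeeping in case (i): generalizing \cref{lem:one-way-pushout} to pushouts that genuinely enlarge the object set, and checking that the new chain of vertices contributes no new morphisms between old objects beyond those already identified with $f$. Once this bookkeeping is in hand, the weak equivalence of the bottom map, and hence the claimed trivial cofibration, follows immediately from the hom-wise reductions above.
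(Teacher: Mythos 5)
Your setup coincides exactly with the paper's: introduce the auxiliary cell $\alpha'$ with $\dom_{\alpha'}=\dom_\alpha$ and a single-edge codomain, apply \cref{cor:special-inductive-step} to see that the top map of the pushout square supplied by \cref{cor:inductive-pushout} is a trivial cofibration. Where you diverge is the concluding step, and here you are doing far more work than is needed. The paper finishes in one line: in any model category the trivial cofibrations are precisely the maps with the left lifting property against all fibrations, and any class defined by a left lifting property is closed under pushout; hence the bottom map of the square is a trivial cofibration, full stop. Your plan instead re-verifies the weak equivalence hom-by-hom, which forces you to extend \cref{lem:one-way-pushout} to pushouts along $\cattwo\to\catn$ that enlarge the object set and to run a case analysis over old versus new vertices. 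That analysis does appear salvageable---the hom-isomorphisms between old vertices are essentially recorded in the proofs of the two lemmas preceding \cref{cor:inductive-pushout}, and the homs involving a new vertex $v_i$ reduce by whiskering along $e_1\cdots e_i$ to homs of $\cP$ into or out of $x$ and $y$ as you indicate---but the ``principal obstacle'' you flag evaporates entirely once you invoke the standard closure property, and note that the closure property also spares you the separate appeal to \cref{lem:natural-inclusion} for cofibrancy of the bottom map. I recommend replacing everything after the display of the pushout square with that single observation.
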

\begin{proof} If the path $\cod_\alpha$ consists of a single edge, this result was proven in \cref{cor:special-inductive-step}. If instead $\cod_\alpha = e_1\cdots e_{n-1}$ then we consider instead an atomic 2-cell $\alpha'$ with $\dom_{\alpha'} = \dom_\alpha$ and $\cod_\alpha =e$ given by a single edge and form the pasting scheme $\cP \cup \alpha'$. By \cref{cor:inductive-pushout}, we have a pushout of simplicial categories
\[ \begin{tikzcd}
\mG (\cP \cup \alpha') \dar \arrow[r, tail, "\sim"]  \arrow[dr, phantom, "\ulcorner" very near end] & N_*\mF(\cP \cup \alpha') \dar \\
\mG (\cP \cup \alpha) \arrow[r, tail] & N_*\mF(\cP \cup \alpha)
\end{tikzcd} \]
and by \cref{cor:special-inductive-step} the top horizontal functor is a trivial cofibration in the model structure for $(\infty,2)$-categories. Thus the bottom horizontal functor must be as well.
\end{proof}

We conclude by assembling these results into a proof of our main theorem.

\begin{proof}[Proof of \cref{thm:uniquepasting}]
By \cref{cor:pasting-scheme-presentations} any pasting scheme $\cP$ can be built inductively from the base case of \cref{ex:base-case} by iteratively attaching the atomic 2-cells along a subpath of the codomain path. We prove that $\mG\cP \rightarrowtail N_*\mF\cP$ is a trivial cofibration in the model structure for $(\infty,2)$-categories by induction over the number of atomic 2-cells in $\cP$.

In the base case, discussed in \cref{ex:base-case}, this map in fact defines an isomorphism of simplicial categories. For the inductive step, we use \cref{prop:top-face} to identify an atomic 2-cell $\alpha$ in $\cP$ with $\cod_\alpha \subset \cod_{\cP}$. By \cref{cor:delete-inner-face}, $\cP\backslash \alpha$ is a sub pasting scheme with one fewer atomic $2$-cell, so by our inductive hypothesis the natural map $\mG(\cP\backslash \alpha) \rightarrowtail N_*\mF(\cP\backslash \alpha)$ is a trivial cofibration in the model structure for $(\infty,2)$-categories. By \cref{cor:general-inductive-step}, it follows that $\cwtoo{\mG \cP}{N_*\mF\cP}$ is a trivial cofibration in the model structure for $(\infty,2)$-categories as desired.
\end{proof}

\subsection{Model independence}\label{ssec:model-independence}

Suppose that $\Cat_{(\infty,2)}$ is a theory of $(\infty,2)$-categories in the sense of Barwick and Schommer-Pries. This means that $\Cat_{(\infty,2)}$ is an $(\infty,1)$-cat\-e\-gory equipped with a nerve functor $N \colon \Gaunt_2 \to \Cat_{(\infty,2)}$ defined on the full subcategory $\Gaunt_2$ of $\twoCat$ consisting of compact gaunt 2-categories, which satisfies the axioms (C.1)--(C.5) of \cite[\textsection7]{BSP}. A 2-category is \textbf{gaunt} just when all of its 1-cell and 2-cell isomorphisms are in fact identities, meaning in particular that all 1-cell equivalences are in fact identities. In particular, the 2-category $\mF\cP$ associated to any pasting scheme $\cP$ is gaunt.

An example of such a theory of $(\infty,2)$-categories is the underlying $(\infty,1)$-category of $\sCat$ above, when equipped with the restriction of the hom-wise nerve functor $N_*$; details about the identification of this nerve appear in \cite{MOR:nerves}. 
Other examples include Barwick's $2$-fold complete Segal spaces \cite{BarwickThesis}, Verity's $2$-complicial sets \cite{Verity:2008wcI,OR:modelstructure}, Lurie's $\infty$-bicategories \cite{LurieGoodwillie,GHL}, Rezk's complete Segal $\TTheta_2$-spaces \cite{rezkTheta}, Bergner--Rezk's Segal categories in complete Segal spaces \cite{Bergner-Rezk-I}, and Ara's $2$-quasi-categories \cite{Ara}.

The $(\infty,1)$-category $\Cat_{(\infty,2)}$ is presentable, meaning in particular that it has all colimits. This allows us to define the \textbf{graph} $\mG\cP \in \Cat_{(\infty,2)}$ of a pasting scheme $\cP$ by the homotopy colimit
\[
\begin{tikzcd} & \aamalg{\text{edge}\cP} N\partial C_1 \arrow[d]\arrow[r] \arrow[dr, phantom, "\ulcorner" very near end] & \aamalg{\text{edge}\cP} NC_1 \arrow[d] \\ \varnothing \arrow[r] &
\aamalg{\ob\cP} NC_0 \arrow[r] & \mG\cP_0 \arrow[r] & \mG\cP \\ &
& \aamalg{\text{face}\cP} N\partial C_2 \arrow[u] \arrow[r] \arrow[ur, phantom, "\llcorner" very near end] & \aamalg{\text{face}\cP} NC_2 \arrow[u]
\end{tikzcd}
\]
built from the 0-, 1-, and 2-cells $NC_0, NC_1, NC_2 \in \Cat_{(\infty,2)}$. The canonical maps
\[
\aamalg{\text{ob}\cP} NC_0 \to N\mF\cP, \quad \aamalg{\text{edge}\cP} NC_1 \to N\mF\cP, \quad 
\aamalg{\text{face}\cP} NC_2 \to N\mF\cP\]
induce a well-defined comparison map $\mG\cP \to N\mF\cP$ in $\Cat_{(\infty,2)}$.

\begin{thm}\label{thm:model-independent-comparison}
For any pasting scheme $\cP$, the canonical map $\mG \cP \to N\mF\cP$ is an equivalence in $\Cat_{(\infty,2)}$.
\end{thm}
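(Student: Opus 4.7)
The plan is to deduce this model-independent statement from the explicit calculation of \cref{thm:uniquepasting} by invoking the Barwick--Schommer-Pries uniqueness theorem for theories of $(\infty,2)$-categories \cite{BSP}. Their axioms (C.1)--(C.5) imply that any two such theories are related by a colimit-preserving equivalence of underlying $(\infty,1)$-categories that is compatible with the prescribed nerve from $\Gaunt_2$ (modulo a duality involution that is irrelevant here, as the comparison map on either side is intrinsically determined by the cocone structure).

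The first step is to recognize the simplicial categories model as such a theory: the underlying $(\infty,1)$-category of $\sCat$, equipped with the hom-wise nerve $N_*$, satisfies (C.1)--(C.5), as established in \cite{MOR:nerves}. By the BSP uniqueness, there exists a colimit-preserving equivalence $F$ from the underlying $(\infty,1)$-category of $\sCat$ to $\Cat_{(\infty,2)}$ together with a natural equivalence $F \circ N_* \simeq N$ on $\Gaunt_2$. Since each of the cells $C_0, C_1, C_2$, their boundaries $\partial C_1, \partial C_2$, and the free $2$-category $\mF\cP$ lies in $\Gaunt_2$, this naturality identifies $F(N_*\mF\cP) \simeq N\mF\cP$ along with the corresponding equivalences on cells and their boundary inclusions.

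Because $F$ preserves small colimits, it carries the homotopy colimit of \cref{defn:graph-of-pasting-scheme} computing $\mG\cP$ in the simplicial categories model to the analogous homotopy colimit defining $\mG\cP$ in $\Cat_{(\infty,2)}$, and it carries the canonical cocone map $\mG\cP \to N_*\mF\cP$ to the canonical cocone map $\mG\cP \to N\mF\cP$. Applying $F$ to the equivalence of \cref{thm:uniquepasting} therefore produces the desired equivalence in $\Cat_{(\infty,2)}$.

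The principal obstacle is extracting from \cite{BSP} the precise statement that the equivalence $F$ can be chosen simultaneously compatible with small colimits and the specified nerve. Once this has been located, no further combinatorial argument is required beyond the model-specific work already carried out in \cref{thm:uniquepasting}. As a follow-up, one can also identify the space of composites mentioned in \cref{cor:contractible-composites} by mapping out of $\mG\cP$ and $N\mF\cP$ respectively, and use this equivalence to conclude that the former is contractible precisely because it is equivalent to the space of functors out of $N\mF\cP$, which represents a unique composite.
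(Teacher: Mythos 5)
Your proposal is correct and follows essentially the same route as the paper: invoke the Barwick--Schommer-Pries uniqueness theorem to obtain a colimit-preserving, nerve-compatible equivalence of theories, observe that the defining (homotopy) colimit of $\mG\cP$ and the cocone map to $N\mF\cP$ are transported across this equivalence, and then apply \cref{thm:uniquepasting}. The one point you gloss over, which the paper makes explicit via \cite[Proposition A.2.4.4]{HTT}, is that the strict $1$-categorical colimits of \cref{defn:graph-of-pasting-scheme} really do compute the homotopy colimit used in the model-independent definition of $\mG\cP$ (this holds because the attaching maps are cofibrations between cofibrant simplicial categories), so that the point-set comparison map \eqref{eq:natural-comparison} is identified with the canonical map in the underlying $(\infty,1)$-category of $\sCat$.
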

\begin{proof}
Our earlier construction of $\mG\cP$ in $\sCat$ from \cref{defn:graph-of-pasting-scheme} is in fact a homotopy colimit by \cite[Proposition A.2.4.4]{HTT}, hence agrees with the construction of $\mG\cP$ in the underlying $(\infty,1)$-category of $\sCat$. This means that in the model of the theory of $(\infty,2)$-categories presented by \cref{def sCat model structure}, the canonical map $\mG\cP\to N\mF\cP$ coincides with the one defined by \eqref{eq:natural-comparison}. We showed in \cref{thm:uniquepasting} that this is an equivalence. 
 
 By \cite{BSP}, any change of models of the theory of $(\infty,2)$-categories defines an equivalence of $(\infty,1)$-categories that is compatible with the nerve.  Since equivalences of $(\infty,1)$-categories preserve colimits, the map in question is well-defined up to change of models of the theory of $(\infty,2)$-categories. Since equivalences in an $(\infty,1)$-category are preserved and reflected by equivalences between $(\infty,1)$-cat\-e\-gories, the result follows.
\end{proof}

As a direct corollary, it follows that any pasting diagram in any $(\infty,2)$-category has a homotopically unique composite. The $(\infty,2)$-category of composites of a pasting diagram $d \colon \mG\cP \to \cC$ in an $(\infty,2)$-category is defined to be the pullback
\[
\begin{tikzcd} \bullet \arrow[r, dashed] \arrow[d, dashed, "\rotatebox{90}{$\sim$}"'] \arrow[dr, phantom, "\lrcorner" very near start] & \cC^{N\mF\cP} \arrow[d, "\rotatebox{90}{$\sim$}"] \\ NC_0 \arrow[r, "d"] & \cC^{\mG\cP}
\end{tikzcd}
\]
in $\Cat_{(\infty,2)}$, where the exponentials are the internal homs guaranteed by axiom (C3) of \cite{BSP}.  By \cref{thm:model-independent-comparison} the map $\mG\cP \wto N\mF\cP$ is an equivalence, hence so is the restriction functor, which pulls back to define an equivalence between the $(\infty,2)$-category of composites and the terminal $(\infty,2)$-category. Hence:

\begin{cor}\label{cor:contractible-composites}
The space of composites of any pasting diagram in any $(\infty,2)$-cat\-e\-gory is contractible. \qed
\end{cor}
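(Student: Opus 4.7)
The plan is to deduce the corollary directly from Theorem \ref{thm:model-independent-comparison}, via standard formal properties of pullbacks and internal homs in a presentable $(\infty,1)$-category.  First I would recall the definition of the $(\infty,2)$-category of composites: it is the pullback
\[
\begin{tikzcd}
\textup{Comp}(d) \arrow[r] \arrow[d] \arrow[dr, phantom, "\lrcorner" very near start] & \cC^{N\mF\cP} \arrow[d, "{(-)|_{\mG\cP}}"] \\
NC_0 \arrow[r, "d"'] & \cC^{\mG\cP}
\end{tikzcd}
\]
in $\Cat_{(\infty,2)}$, where the exponentials exist by axiom (C3) of \cite{BSP} and $d$ classifies the pasting diagram.

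The key input is Theorem \ref{thm:model-independent-comparison}, which provides an equivalence $\mG\cP \wto N\mF\cP$ in $\Cat_{(\infty,2)}$. Since the internal hom $\cC^{(-)}$ is a right adjoint (again by (C3)), it is an equivalence-preserving functor $\Cat_{(\infty,2)}^{\op}\to\Cat_{(\infty,2)}$, so the restriction map $\cC^{N\mF\cP} \to \cC^{\mG\cP}$ is an equivalence in $\Cat_{(\infty,2)}$. Pullback in an $(\infty,1)$-category preserves equivalences in one leg of the cospan, so $\textup{Comp}(d) \to NC_0$ is an equivalence. Since $NC_0$ is the terminal object of $\Cat_{(\infty,2)}$ (it is the nerve of the terminal gaunt 2-category, and the nerve is a right adjoint preserving terminal objects by the Barwick--Schommer-Pries axioms), this identifies $\textup{Comp}(d)$ with the terminal $(\infty,2)$-category. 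Finally, the underlying space of the terminal $(\infty,2)$-category is contractible, yielding the corollary.

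There is essentially no obstacle once Theorem \ref{thm:model-independent-comparison} is in hand; the only mild subtlety is book-keeping around what ``space of composites'' means. If one prefers to define it as the maximal $\infty$-groupoid inside $\textup{Comp}(d)$ rather than $\textup{Comp}(d)$ itself, the conclusion is unchanged, because the underlying $\infty$-groupoid functor is a right adjoint and so preserves terminal objects and pullbacks; an equivalence of $(\infty,2)$-categories in particular induces an equivalence of their underlying $\infty$-groupoids. Thus the same argument yields contractibility of either model of the space of composites.
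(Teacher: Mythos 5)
Your proposal is correct and follows essentially the same route as the paper: define the $(\infty,2)$-category of composites as the pullback of the restriction map $\cC^{N\mF\cP}\to\cC^{\mG\cP}$ along $d$, observe that this restriction is an equivalence by \cref{thm:model-independent-comparison}, and conclude that the pullback is equivalent to the terminal object $NC_0$. Your closing remark about the maximal sub-$\infty$-groupoid is a reasonable supplement to the paper's observation that the $(\infty,2)$-category of composites is itself an $\infty$-groupoid.
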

 
 In particular, this $(\infty,2)$-category is an $\infty$-groupoid, justifying our calling it the ``space'' of composites.

\begin{rmk}\label{rmk:contractible-composites} For $\cB,\cC \in \Cat_{(\infty,2)}$, the defining universal property of $\cC^{\cB}$ characterizes this $(\infty,2)$-category as the right adjoint to the cartesian product. Thus we interpret its 0-cells as pseudofunctors $\cB \to \cC$, its 1-cells as pseudonatural transformations between these, and its 2-cells as ``modifications.'' In certain models of $(\infty,2)$-categories, such as 2-quasi-categories,  scaled simplicial sets, or 2-complicial sets, it might be preferable to work with an alternative internal hom $\text{Fun}_{\text{oplax}}(\cB,\cC)$ defined as the right adjoint to the Gray tensor product; see \cite{Maehara-gray}, \cite{GHL-gray}, or \cite{Verity:2008cs, ORV} for details. The 0-cells of $\text{Fun}_{\text{oplax}}(\cB,\cC)$ are again pseudofunctors $\cB \to \cC$, but now the 1-cells are oplax natural transformations. The equivalence $\mG\cP \wto N\mF\cP$ again induces an equivalence $\text{Fun}_{\text{oplax}}(N\mF\cP,\cC) \to \text{Fun}_{\text{oplax}}(\mG\cP,\cC)$, so the fibers of this map are contractible $\infty$-groupoids. 

An $(\infty,2)$-category $\text{Fun}_{\text{icon}}(\cB,\cC)$ of pseudofunctors and ``icons''---an appellation given by analogy to the identity component oplax natural transformations in 2-category theory defined by Lack \cite{Lack:Icons}---can be defined to be the sub $(\infty,2)$-cat\-e\-gory spanned by those oplax natural transformations whose components are equivalences. Since the functor $\mG\cP \wto N\mF\cP$ is surjective on 0-cells, the square 
\[
\begin{tikzcd} \text{Fun}_{\text{icon}}(N\mF\cP,\cC) \arrow[r, hook] \arrow[d,  "\rotatebox{90}{$\sim$}"'] \arrow[dr, phantom, "\lrcorner" very near start] & \text{Fun}_{\text{oplax}}(N\mF\cP,\cC)\arrow[d, "\rotatebox{90}{$\sim$}"] \\ \text{Fun}_{\text{icon}}(\mG\cP,\cC) \arrow[r,hook] & \text{Fun}_{\text{oplax}}(\mG\cP,\cC)
\end{tikzcd}
\]
is a pullback, so the fibers of the map between the sub $(\infty,2)$-categories coincide with the fibers of the map between the larger $(\infty,2)$-categories. Morally, this should recover the $(\infty,2)$-category of icons considered by \cite[Theorem D]{Columbus:2-cat} following a similar construction in \cite[\S4.4]{RV:adj}, though we don't know a direct comparison between the constructions given in the different models. 
\end{rmk}
 
\bibliographystyle{alpha}
\bibliography{refs_inf-2}

\newcommand{\etalchar}[1]{$^{#1}$}
\begin{thebibliography}{BMO{\etalchar{+}}15}

\bibitem[AM14]{AraMaltsiniotisVers}
Dimitri Ara and Georges Maltsiniotis.
\newblock Vers une structure de cat\'{e}gorie de mod\`eles \`a la {T}homason
  sur la cat\'{e}gorie des {$n$}-cat\'{e}gories strictes.
\newblock {\em Adv. Math.}, 259:557--654, 2014.

\bibitem[Ara14]{Ara}
Dimitri Ara.
\newblock Higher quasi-categories vs higher {R}ezk spaces.
\newblock {\em J. K-Theory}, 14(3):701--749, 2014.

\bibitem[Bar05]{BarwickThesis}
Clark Barwick.
\newblock {\em $(\infty, n)$-{C}at as a closed model category}.
\newblock PhD thesis, University of Pennsylvania, 2005.

\bibitem[B{\'{e}}n67]{Benabou-Bicategories}
Jean B{\'{e}}nabou.
\newblock Introduction to bicategories.
\newblock In {\em Reports of the Midwest Category Seminar}, volume~47 of {\em
  Lecture Notes in Math.}, pages 1--77. Springer, Berlin, Heidelberg, 1967.

\bibitem[Ber07a]{Berger-Wreath}
Clemens Berger.
\newblock Iterated wreath product of the simplex category and iterated loop
  spaces.
\newblock {\em Adv. Math.}, 213(1):230--270, 2007.

\bibitem[Ber07b]{Bergner:MS}
Julia~E. Bergner.
\newblock A model category structure on the category of simplicial categories.
\newblock {\em Trans. Amer. Math. Soc.}, 359(5):2043--2058, 2007.

\bibitem[BM08]{BondyMurty}
J.~A. Bondy and U.~S.~R. Murty.
\newblock {\em Graph theory}, volume 244 of {\em Graduate Texts in
  Mathematics}.
\newblock Springer, New York, 2008.

\bibitem[BMO{\etalchar{+}}15]{BMOOPY}
Anna~Marie Bohmann, Kristen Mazur, Ang\'{e}lica~M. Osorno, Viktoriya Ozornova,
  Kate Ponto, and Carolyn Yarnall.
\newblock A model structure on {$G\mathcal Cat$}.
\newblock In {\em Women in topology: collaborations in homotopy theory}, volume
  641 of {\em Contemp. Math.}, pages 123--134. Amer. Math. Soc., Providence,
  RI, 2015.

\bibitem[BR13]{Bergner-Rezk-I}
Julia~E. Bergner and Charles Rezk.
\newblock Comparison of models for {$(\infty,n)$}-categories, {I}.
\newblock {\em Geom. Topol.}, 17(4):2163--2202, 2013.

\bibitem[BSP21]{BSP}
Clark Barwick and Christopher Schommer-Pries.
\newblock On the unicity of the homotopy theory of higher categories.
\newblock {\em J. Amer. Math. Soc.}, 34(4):1011--1058, 2021.

\bibitem[Cis99]{CisinskiDwyer}
Denis-Charles Cisinski.
\newblock La classe des morphismes de {D}wyer n'est pas stable par retractes.
\newblock {\em Cahiers Topologie G\'{e}om. Diff\'{e}rentielle Cat\'{e}g.},
  40(3):227--231, 1999.

\bibitem[Col17]{Columbus:2-cat}
Tobias Columbus.
\newblock {\em $2$-Categorical Aspects of Quasi-Categories}.
\newblock PhD thesis, Karlsruher Institut f\"{u}r Technologie, 2017.

\bibitem[FL81]{FritschLatch:HomotopyInversesForNerve}
Rudolf Fritsch and Dana~May Latch.
\newblock Homotopy inverses for nerve.
\newblock {\em Math. Z.}, 177(2):147--179, 1981.

\bibitem[For22]{Forest:Unifying}
Simon Forest.
\newblock Unifying notions of pasting diagrams.
\newblock {\em High. Struct.}, 6(1):1--79, 2022.

\bibitem[GHL21]{GHL-gray}
Andrea Gagna, Yonatan Harpaz, and Edoardo Lanari.
\newblock Gray tensor products and lax functors of {$(\infty,2)$}-categories.
\newblock {\em Adv. Math.}, 391:Paper No. 107986, 32, 2021.

\bibitem[GHL22]{GHL}
Andrea Gagna, Yonatan Harpaz, and Edoardo Lanari.
\newblock On the equivalence of all models for $(\infty,2)$-categories.
\newblock {\em J. Lond. Math. Soc. (2)}, 2022.
\newblock \href{https://arxiv.org/abs/1911.01905v2}{arXiv:1911.01905v2}
  [math.AT], \href{https://doi.org/10.1112/jlms.12614}{doi:10.1112/jlms.12614}.

\bibitem[Had20]{Hadzihasanovic}
Amar Hadzihasanovic.
\newblock A combinatorial-topological shape category for polygraphs.
\newblock {\em Appl. Categ. Structures}, 28(3):419--476, 2020.

\bibitem[HORR22]{HORR-Dwyer}
Philip Hackney, Viktoriya Ozornova, Emily Riehl, and Martina Rovelli.
\newblock Pushouts of {D}wyer maps are $(\infty,1)$-categorical.
\newblock \href{https://arxiv.org/abs/2205.02353}{arXiv:2205.02353} [math.AT],
  2022.

\bibitem[Joh89]{Johnson89}
Michael Johnson.
\newblock The combinatorics of {$n$}-categorical pasting.
\newblock {\em J. Pure Appl. Algebra}, 62(3):211--225, 1989.

\bibitem[JT07]{JoyalTierney:QCSS}
Andr\'{e} Joyal and Myles Tierney.
\newblock Quasi-categories vs {S}egal spaces.
\newblock In {\em Categories in algebra, geometry and mathematical physics},
  volume 431 of {\em Contemp. Math.}, pages 277--326. Amer. Math. Soc.,
  Providence, RI, 2007.

\bibitem[JY19]{JY}
Niles Johnson and Donald Yau.
\newblock A bicategorical pasting theorem.
\newblock \href{https://arxiv.org/abs/1910.01220v1}{arXiv:1910.01220v1}
  [math.CT], 2019.

\bibitem[JY21]{JYbook}
Niles Johnson and Donald Yau.
\newblock {\em $2$-dimensional categories}.
\newblock Oxford University Press, Oxford, 2021.

\bibitem[KS74]{KS}
G.~M. Kelly and Ross Street.
\newblock Review of the elements of {$2$}-categories.
\newblock In {\em Category {S}eminar ({P}roc. {S}em., {S}ydney, 1972/1973)},
  pages 75--103. Lecture Notes in Math., Vol. 420, 1974.

\bibitem[Lac10]{Lack:Icons}
Stephen Lack.
\newblock Icons.
\newblock {\em Appl. Categ. Structures}, 18(3):289--307, 2010.

\bibitem[Lur09a]{HTT}
Jacob Lurie.
\newblock {\em Higher topos theory}, volume 170 of {\em Annals of Mathematics
  Studies}.
\newblock Princeton University Press, Princeton, NJ, 2009.

\bibitem[Lur09b]{LurieGoodwillie}
Jacob Lurie.
\newblock {$(\infty, 2)$-categories and the Goodwillie Calculus I}.
\newblock \href{https://arxiv.org/abs/0905.0462v2}{arXiv:0905.0462v2}
  [math.CT], 2009.

\bibitem[Mae21]{Maehara-gray}
Yuki Maehara.
\newblock The gray tensor product for 2-quasi-categories.
\newblock {\em Advances in Mathematics}, 377:1--78, 2021.

\bibitem[MOR22]{MOR:nerves}
Lyne Moser, Viktoriya Ozornova, and Martina Rovelli.
\newblock Model independence of $(\infty,2)$-categorical nerves.
\newblock \href{https://arxiv.org/abs/2206.00660}{arXiv:2206.00660} [math.AT],
  2022.

\bibitem[OR20]{OR:modelstructure}
Viktoriya Ozornova and Martina Rovelli.
\newblock Model structures for ({$\infty,n$})-categories on (pre)stratified
  simplicial sets and prestratified simplicial spaces.
\newblock {\em Algebr. Geom. Topol.}, 20(3):1543--1600, 2020.

\bibitem[ORV20]{ORV}
Viktoriya Ozornova, Martina Rovelli, and Dominic Verity.
\newblock Gray tensor product and saturated $n$-complicial sets.
\newblock \href{https://arxiv.org/abs/2007.01235v1}{arXiv:2007.01235v1}
  [math.AT], 2020.

\bibitem[Pow90]{PowerPasting}
A.~J. Power.
\newblock A {$2$}-categorical pasting theorem.
\newblock {\em J. Algebra}, 129(2):439--445, 1990.

\bibitem[Rap10]{Raptis:HTP}
George Raptis.
\newblock Homotopy theory of posets.
\newblock {\em Homology Homotopy Appl.}, 12(2):211--230, 2010.

\bibitem[Rez10]{rezkTheta}
Charles Rezk.
\newblock A {C}artesian presentation of weak {$n$}-categories.
\newblock {\em Geom. Topol.}, 14(1):521--571, 2010.

\bibitem[Rie14]{Riehl:2014ch}
Emily Riehl.
\newblock {\em Categorical Homotopy Theory}, volume~24 of {\em New Mathematical
  Monographs}.
\newblock Cambridge University Press, 2014.

\bibitem[RV16]{RV:adj}
Emily Riehl and Dominic Verity.
\newblock Homotopy coherent adjunctions and the formal theory of monads.
\newblock {\em Adv. Math.}, 286:802--888, 2016.

\bibitem[Sch19]{SchwedeOrbispaces}
Stefan Schwede.
\newblock Categories and orbispaces.
\newblock {\em Algebr. Geom. Topol.}, 19(6):3171--3215, 2019.

\bibitem[Ste93]{Steiner93}
Richard Steiner.
\newblock The algebra of directed complexes.
\newblock {\em Appl. Categ. Structures}, 1(3):247--284, 1993.

\bibitem[Ste04]{SteinerEmbedding}
Richard Steiner.
\newblock Omega-categories and chain complexes.
\newblock {\em Homology Homotopy Appl.}, 6(1):175--200, 2004.

\bibitem[Str76]{Street:Limits}
Ross Street.
\newblock Limits indexed by category-valued {$2$}-functors.
\newblock {\em J. Pure Appl. Algebra}, 8(2):149--181, 1976.

\bibitem[Str91]{Street:Parity}
Ross Street.
\newblock Parity complexes.
\newblock {\em Cahiers Topologie G\'{e}om. Diff\'{e}rentielle Cat\'{e}g.},
  32(4):315--343, 1991.

\bibitem[Tho80]{ThomasonModelCat}
R.~W. Thomason.
\newblock Cat as a closed model category.
\newblock {\em Cahiers Topologie G\'{e}om. Diff\'{e}rentielle}, 21(3):305--324,
  1980.

\bibitem[Ver92]{Verity-Thesis}
Dominic Verity.
\newblock {\em Enriched categories, internal categories and change of base}.
\newblock PhD thesis, Cambridge University, 1992.

\bibitem[Ver08a]{Verity:2008cs}
Dominic Verity.
\newblock Complicial sets characterising the simplicial nerves of strict
  {$\omega$}-categories.
\newblock {\em Mem. Amer. Math. Soc.}, 193(905):xvi+184, 2008.

\bibitem[Ver08b]{Verity:2008wcI}
Dominic Verity.
\newblock Weak complicial sets. {I}. {B}asic homotopy theory.
\newblock {\em Adv. Math.}, 219(4):1081--1149, 2008.

\end{thebibliography}

\end{document}